\subjclass{ 37C15; 37C40; 37D20; 37D30}
\keywords{Lyapunov exponents, rigidity, Anosov diffeomorphim, partially hyperbolic diffeomorphism}
\theoremstyle{plain}
\newtheorem{main}{Theorem}
\newtheorem{maincor}[main]{Corollary}
\newtheorem{theorem}{Theorem}[section]
\newtheorem{lemma}[theorem]{Lemma}
\newtheorem{proposition}[theorem]{Proposition}
\theoremstyle{remark}
\newtheorem{remark}[theorem]{Remark}
\newtheorem{definition}[theorem]{Definition}
\newtheorem{conjecture}[theorem]{Conjecture}
\newcommand{\Leb}{\operatorname{vol}}
\newcommand{\Gibb}{\operatorname{Gibb}}
           \def\ea{\end{array}}
          \def\ec{\end{center}}
     \def\ed{\end{description}}
        \def\ee{\end{equation}}
       \def\eea{\end{eqnarray}}
     \def\eeaa{\end{eqnarray*}}
 \def\et{\end{thebibliography}}
\def\bib{\bibitem}
\def\Diff{{\rm Diff}}
\def\supp{\operatorname{supp}}
\def\cA{{\mathcal A}}
\def\cL{{\mathcal L}}
\def\cU{{\mathcal U}}
\def\cH{{\mathcal H}}
\def\cF{{\mathcal F}}
\def\cW{{\mathcal W}}
\def\vep{\varepsilon}
\def\TT{{\mathbb T}}
\def\ZZ{{\mathbb Z}}
\title[Lyapunov exponents and rigidity]{Lyapunov exponents and rigidity of Anosov automorphisms and skew products}
\author{Radu Saghin and Jiagang Yang}
\date{\today}
\thanks{R.S. was partially supported by Fondecyt 1171477,  Math AM-Sud project 16-math-06 Physeco and Conicyt PAI80160049, J.Y. was partially supported by CNPq, FAPERJ, CONICYT PAI80160049, and would like to thank Instituto de Matem\'aticas, PUCV for the hospitality.}
\address{Instituto de Matem\'atica, Pontificia Universidad Cat\'olica de Valpara\'iso, Blanco Viel 596, Cerro Bar\'on, Valpara\'iso, Chile}
\email{rsaghin\@@gmail.com}
\address{Department of Mathematics, Southern University of Science and Technology of China, 1088 Xueyuan Rd., Xili, Nanshan District, Shenzhen, Guangdong, China 518055}
\address{Departamento de Geometria, Instituto de Matem\'atica e Estat\'istica, Universidade Federal Fluminense, Niter\'oi, Brazil}
\email{yangjg\@@impa.br}
\begin{document}

\begin{abstract}

In this paper we obtain local rigidity results for linear Anosov diffeomorphisms in terms of Lyapunov exponents. More specifically, we show that given an irreducible linear hyperbolic automorphism $L$ with simple real eigenvalues with distinct absolute values, any small perturbation preserving the volume and with the same Lyapunov exponents is smoothly conjugate to $L$.

We also obtain rigidity results for skew products over Anosov diffeomorphisms. Given a volume preserving partially hyperbolic skew product diffeomorphism $f_0$ over an Anosov automorphism of the 2-torus, we show that for any volume preserving perturbation $f$ of $f_0$ with the same average stable and unstable Lyapunov exponents, the center foliation is smooth.

\end{abstract}

\maketitle

\setcounter{tocdepth}{1} \tableofcontents

\section{Introduction}

This paper deals with the rigidity for some special classes of dynamical systems, namely hyperbolic and partially hyperbolic diffeomorphisms.

\subsection{Anosov diffeomorphisms}

It is well known that hyperbolic diffeomorphism and flows are structurally stable, meaning that small $C^1$ perturbations are topologically conjugate (or equivalent) to the initial system. However the conjugacy is in general only H\H older continuous, and is not absolutely continuous. Having better regularity of the conjugacy is a rare phenomenon and it is called (smooth) rigidity. It is important understanding when rigidity occurs and finding characterizations for this.

There are some necessary conditions for having smooth conjugacy. If the conjugacy between two Anosov diffeomorphisms is $C^1$, then any type of smooth invariant or structure must be preserved by the conjugacy.

Probably the best known example is the {\it periodic data}, the eigenvalues of the return map at the periodic points. There exists a considerable amount of work regarding the converse implication: if the periodic data of two Anosov diffeomorphisms is the same, then the diffeomorphisms are smoothly conjugate. The result is known to be true in dimension 2 by the pioneering work of de la LLave, Marco and Moriy\'on \cite{MM1,L87,MM2,LM}, based on \cite{LMM}. In its highest generality it is still open in higher dimensions, however there are many partial results in this direction.

There is also a counterexample in higher dimension by de la Llave \cite{L92}, showing that in higher dimension one has to add some other hypothesis like irreducibility of the linear map, real simple spectrum, quasi-conformality of the derivative on higher dimensional invariant sub-bundles, etc. The study of the local rigidity of linear Anosov maps, i.e. the smooth conjugacy with the linear Anosov map given the appropriate hypothesis, was more successful. There are many contributions in this direction by de la Llave, Gogolev, Kalinin, Sadovskaya, and others (\cite{MM1,L87,MM2,LM,L92,G08,G,GG,GKS,KS07,KS09,KS10}).

In general there is a parallel between the rigidity results for hyperbolic systems and the rigidity results in Hyperbolic Geometry, going of course through hyperbolic flows. The periodic data has an equivalent in Hyperbolic Geometry, it is the {\it marked length spectrum}, the length of the closed geodesics inside each homotopy class. Two isometric manifolds have the same marked length spectrum, and a conjecture of Burns-Katok \cite{BK} states that the converse is also true. The result is known in dimension 2 by work of Otal \cite{Ot} and Croke \cite{Cr}, and in any dimension with some mild regularity conditions by the recent results of Guillarmou-Lefeuvre \cite{GL}.

A special problem in the rigidity of Anosov systems is finding sufficient conditions for the existence of a smooth conjugacy of an Anosov system with the linear model. On one hand this is a very interesting question, and on another hand this problem may be easier to study, because of the algebraic structure of the linear model.

One consequence of the smooth conjugacy of a hyperbolic system with the linear one is the regularity of the stable and unstable foliations. Again, it is conjectured that the converse implication holds, at least under some suitable conditions. A remarkable result for Anosov diffeomorphisms on surfaces (and Anosov flows in dimension 3) was obtained by Hurder and Katok \cite{HK}, building on previous work of Anosov, Ghys and others: they showed that enough regularity of the stable and unstable foliations (in their case $C^1$ plus little Zygmund) implies local rigidity. There are various other partial results in this direction, in particular for (geodesic) flows, or under the hypothesis of the preservation of some other structures (for example symplectic).

We are interested in another consequence of the smooth conjugacy between an Anosov diffeomorphisms $f$ and its linear part $L$. The pull back of the Lebesgue measure gives a volume on the manifold which is invariant for the given diffeomorphism $f$. Furthermore the Lyapunov exponents of $f$ with respect to the invariant volume must be equal to the Lyapunov exponents of the linear map $L$.

Again it is natural to ask if the converse implication holds: given an Anosov diffeomorphism $f$, possibly with the additional assumption that $f$ is $C^1$ close to its linear part $L$, which preserve a volume and has the Lyapunov exponents equal to the exponents of $L$, is it true that $f$ is smoothly conjugated to $L$? It was known for a while that in dimension 2 the result holds, but there are no known results for Anosov diffeomorphisms on higher dimensional manifolds. We will see that we can give an affirmative answer in many situations.

Let us remark here that obtaining the smooth conjugacy from equal Lyapunov exponents is only possible if one of the maps is linear. This is in contrast with the periodic data, where one can hope to get general rigidity, eventually under some extra hypothesis. The improvement in our case is that we have only a {\it finite set of data}: the Lyapunov exponents, which consist of a finite set of real numbers. One can hope that this data could determine the smooth conjugacy class of the linear Anosov map, but it is definitely not enough in order to determine other smooth conjugacy classes (in fact the set of conjugacy classes is 'infinite dimensional', because different periodic data gives different conjugacy classes).

It is easy to construct examples of two volume preserving maps with the same Lyapunov exponents (different than the exponents of the linear map) and which are not smoothly conjugated, even in dimension two. For example one can modify a linear Anosov map in two ways, by mixing locally the two directions, but in different places of the manifold; this will create two Anosov maps with the same exponents but with different eigenvalues at some periodic points, so they will not be smoothly conjugated.

We first consider 3 dimensional Anosov diffeomorphisms which are also partially hyperbolic. A diffeomorphism $f$ on the Riemannian manifold $M$ is \emph{Anosov} if there exists a $Df$ invariant splitting of the tangent bundle $TM=E^s\oplus E^u$, and a real number $\lambda>0$, such that for all $x\in M$, for all unit vectors $v^i\in E^i_x\setminus \{0\}$ ($i\in\{s,u\}$), and for some suitable Riemannian metric on $M$, we have
$$
\|Df\mid_{E^s_x}(v^s)\|\leq e^{-\lambda},
$$
$$
e^{\lambda}\leq \|Df\mid_{E^u_x}(v^u)\|.
$$
A diffeomorphism $f$ is \emph{partially hyperbolic} if there exists a $Df$-invariant splitting of the tangent bundle $TM = E^{ss}\oplus E^c \oplus E^{uu}$, such that for all $x\in M$, for all unit vectors $v^i\in E^i_x\setminus \{0\}$ ($i\in\{ ss, c, uu\}$), and for some suitable Riemannian metric on $M$, we have
$$
e^{\lambda_1(x)}\leq \|Df\mid_{E^s_x}(v^s)\|\leq e^{\lambda_2(x)},
$$
$$
e^{\lambda_3(x)}\leq \|Df\mid_{E^c_x}(v^c)\|\leq e^{\lambda_4(x)},
$$
$$
e^{\lambda_5(x)}\leq \|Df\mid_{E^u_x}(v^u)\|\leq e^{\lambda_6(x)},
$$
where $\lambda_1(x)\leq \lambda_2(x)<\lambda_3(x)\leq\lambda_4(x)<\lambda_5(x)\leq\lambda_6(x)$, with $\lambda_2(x)<0$ and $\lambda_5(x)>0$. If the functions $\lambda_i(\cdot)$ $i\in\{1,2,\cdots, 6\}$ can be taken to be constant, then we say that $f$ is \emph{absolute partially hyperbolic.}

When the manifold is the three dimensional torus, we can make use of the topological classification of the partially hyperbolic diffeomorphisms over $\TT^3$ developed in a series of papers \cite{BBI09, Ham13,HaP,Pot15,Ure12}. In this case we can show that having the right Lyapunov exponents implies local rigidity for a large class of Anosov diffeomorphisms.

\begin{main}\label{main.3d}

Let $L$ be a three dimensional Anosov automorphism on $\TT^3$ with simple real eigenvalues with distinct absolute values. Suppose $f$ is a $C^\infty$ volume preserving Anosov diffeomorphism which is isotopic to $L$ and is partially hyperbolic. If $f$ has the same Lyapunov exponents as $L$, then $f$ is $C^\infty$ conjugate to $L$.

\end{main}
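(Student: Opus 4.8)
The plan is to produce a topological conjugacy, rigidify it using the Lyapunov and entropy data, and then bootstrap its regularity. By structural stability and the Franks--Manning theory, $f$ is topologically conjugate to $L$: there is a homeomorphism $h$ of $\TT^3$, homotopic to the identity, with $h\circ f=L\circ h$, and $h$ carries the stable and unstable foliations of $f$ onto those of $L$. Since $f$ is also partially hyperbolic and isotopic to $L$, the topological classification of partially hyperbolic diffeomorphisms on $\TT^3$ \cite{BBI09,Ham13,HaP,Pot15,Ure12} shows that $f$ is dynamically coherent and that $h$ maps $\cW^{ss}_f,\cW^c_f,\cW^{uu}_f,\cW^{cs}_f,\cW^{cu}_f$ onto the corresponding linear foliations of $L$; as $L$ is irreducible, these linear foliations are by lines and planes with totally irrational directions, hence minimal. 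Replacing $f,L$ by $f^{-1},L^{-1}$ if necessary, I may assume the eigenvalues satisfy $|\lambda_1|<1<|\lambda_2|<|\lambda_3|$, so that $E^c_f$ is one-dimensional and uniformly expanding and $E^u_f=E^c_f\oplus E^{uu}_f$.

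The crucial point is to turn the Lyapunov hypothesis into rigidity through an entropy identity. Since $f$ is a $C^\infty$ volume-preserving Anosov diffeomorphism, $\Leb$ is ergodic and is the SRB measure of $f$, so Pesin's entropy formula gives $h_{\mathrm{vol}}(f)=\ln|\lambda_2|+\ln|\lambda_3|$ (the sum of the positive Lyapunov exponents of $f$, which by hypothesis are those of $L$), while $h_{\mathrm{top}}(f)=h_{\mathrm{top}}(L)=\ln|\lambda_2|+\ln|\lambda_3|$. Hence $\Leb$ is the unique measure of maximal entropy of $f$, and pushing forward, $h_*\Leb$ is the unique measure of maximal entropy of $L$, namely the Haar measure $\Leb$. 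I now use this along the two extreme one-dimensional foliations. The conditional measures of $\Leb$ on the leaves of $\cW^{ss}_f$ are absolutely continuous with respect to arclength with a $C^\infty$ positive density (a convergent infinite product of ratios of stable Jacobians along the leaf), whereas for the linear map the conditionals of $\Leb$ on the leaves of $\cW^{ss}_L$ are just arclength; since $h$ matches these foliations and $h_*\Leb=\Leb$, the restriction of $h$ to each leaf of $\cW^{ss}_f$ pushes a smooth positive-density measure to arclength and is therefore a $C^\infty$ diffeomorphism onto its image (in arclength coordinates its derivative is that density, which is $C^\infty$ and positive), with bounds uniform in the leaf. The same argument applied to $f^{-1}$ makes $h$ uniformly $C^\infty$ along the leaves of $\cW^{uu}_f$. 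Evaluating at a periodic point $p$ of period $k$: the $C^\infty$ conjugacy $h|_{\cW^{ss}_f(p)}$ between the contraction $f^k|_{\cW^{ss}_f(p)}$ (with fixed point $p$) and multiplication by $\lambda_1^k$ forces $Df^k|_{E^{ss}_p}=\lambda_1^k$; likewise $Df^k|_{E^{uu}_p}=\pm\lambda_3^k$; and since $|\det Df^k_p|=1$ this yields $|Df^k|_{E^c_p}|=|\lambda_2|^k$. Thus the full periodic data of $f$ coincides with that of $L$.

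It remains to upgrade $h$ from being $C^\infty$ along the two extreme one-dimensional foliations to being globally $C^\infty$. One option is to invoke the local rigidity theorems for Anosov automorphisms with coinciding periodic data (\cite{L92,G08,GKS} and related work of de la Llave, Gogolev, Kalinin and Sadovskaya), which apply since $L$ is irreducible with simple real spectrum and has minimal invariant foliations. A more self-contained route: disintegrate $\Leb$ along the two-dimensional unstable foliation $\cW^{cu}_f=\cW^u_f$ (again with smooth conditionals, as $f$ is volume-preserving Anosov) and then, inside each unstable leaf, along its strong-unstable sub-foliation; the same one-dimensional measure-rigidity argument then shows $h$ is $C^\infty$ along $\cW^c_f$, Journ\'e's lemma inside the unstable leaves makes $h$ uniformly $C^\infty$ along $\cW^{cu}_f$, and Journ\'e's lemma for the transverse pair $(\cW^{cu}_f,\cW^{ss}_f)$ makes $h$ globally $C^\infty$. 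The main difficulty is precisely this last step: the center direction is not dynamically extreme, so $h$ is not directly controlled along $\cW^c_f$ by a measure-rigidity argument, and one must either feed the matched periodic data into a Liv\v{s}ic cohomology argument exploiting the minimality of the linear foliations, or run the disintegration argument, which in turn needs control (a bunching condition) of the regularity of the strong-unstable sub-foliation inside the unstable leaves. The preceding steps — the entropy identity, the identification $h_*\Leb=\Leb$, and the smoothness of $h$ along the one-dimensional extreme foliations — are by comparison routine.
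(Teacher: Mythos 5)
Your first two steps (the entropy identity forcing $h_*\Leb=\Leb$, and the disintegration argument giving smoothness of $h$ along the one--dimensional stable foliation) are sound and match the paper. But there are two genuine gaps. First, the topological classification of partially hyperbolic systems on $\TT^3$ does \emph{not} give that $h$ maps $\cW^{uu}_f$ onto $\cW^{uu}_L$: the Franks conjugacy is only known to preserve the stable, center (weak unstable), center--stable and center--unstable foliations. Preservation of the strong unstable foliation is a nontrivial lemma of Gogolev (\cite{GG,GKS}), and its hypothesis is precisely that $h$ is already uniformly smooth along the center foliation. So your smoothness of $h$ along $\cW^{uu}_f$ (and hence the matching of the $\lambda_1,\lambda_3$ periodic data and, by the determinant, of the full periodic data) rests on an unproved foliation--preservation claim; moreover, even granting it, the absolute continuity of the conditionals of $\Leb$ along the strong--unstable \emph{sub}foliation of $f$ is not automatic (the volume is a Gibbs $u$-state for the full 2D unstable, not a priori along its 1D subfoliation), so the ``push smooth densities to arclength'' argument cannot be run upstairs without further input.

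Second, and most importantly, you leave the center direction unresolved, and the alternatives you sketch do not close it: the periodic-data rigidity results you cite are \emph{local} (they require $f$ to be $C^1$-close to $L$, whereas here $f$ is only isotopic to $L$), and your disintegration route through $\cW^{uu}$ inside unstable leaves needs exactly the regularity/absolute-continuity control you admit is missing. The point you miss is that the center direction here \emph{is} accessible to the measure-rigidity argument, because $\cF^{wu}_f$ is a one--dimensional uniformly expanding foliation which the classification does guarantee is preserved by $h$, and $\Leb$ is trivially a Gibbs expanding state of $L$ along the linear foliation $\cF^{wu}_L$. The relative (leafwise) Pesin formula then converts the equality of the center exponents into absolute continuity of the transported disintegrations, i.e.\ into uniform $C^{1+\vep}$ regularity of $h$ along $\cF^{wu}$ --- this is the content of the paper's Theorem~\ref{main.technique} (implication (B4)$\Rightarrow$(B1)$\Rightarrow$(B5$^\prime$)), and it does not use Livshitz theory or periodic data at all. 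With that in hand the correct order of the argument is: smoothness along $\cF^{s}$ and along $\cF^{wu}$; then Gogolev's lemma yields $h_f(\cF^{su}_f)=\cF^{su}_L$; then Theorem~\ref{main.technique} again gives smoothness along $\cF^{su}$; then Journ\'e's lemma applied twice (inside unstable leaves for the pair $(\cF^{wu},\cF^{su})$, then for the pair stable/unstable) gives $h\in C^{1+\vep}$; finally Gogolev's bootstrap \cite{G} upgrades the conjugacy to $C^\infty$. Without the weak-unstable step your chain of implications cannot start, so the proposal as written does not prove the theorem.
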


\begin{remark}\label{rk.analytic}

One can also get results on the $C^k$ differentiability of the conjugacy for any values of $k>1$, or the analycity of the conjugacy, for more about this see \cite{G}, \cite{L97}. 

\end{remark}

A result similar to Theorem \ref{main.3d} was obtained independently and announced recently by Micena-Tahzibi in \cite{MT18}. The assumption in \cite{MT18} is that $f$ is a $C^r$ ($r\geq2$) volume preserving Anosov diffeomorphism isotopic to $L$, also partially hyperbolic, the center (or weak unstable) foliation is absolutely continuous and the center Lyapunov exponent is the same as the one for $L$, and the conclusion is that the conjugacy is $C^1$. Also a partial result in this direction was obtained by Poletti in \cite{Pol18}.

A similar rigidity result is true in higher dimensions, but we have to make some restrictions. First, since there are no classification results in higher dimensions, we have to restrict our attention to a $C^1$ neighborhood of the Anosov automorphism $L$.

Second, we need the linear Anosov diffeomorphism to be \emph{irreducible}, that is, it has no rational invariant subspaces, or, its characteristic polynomial is irreducible over $\mathbb{Q}$. It is necessary to ask some extra condition in higher dimension, the irreducibility of $L$ being an example, otherwise there are counterexamples constructed by de la Llave and Gogolev \cite{L92,G08}. 

We will also ask that $L$ has simple real eigenvalues with distinct absolute values, in order to avoid complications that may arise from having multiple exponents.

\begin{main}\label{main.higherdimension}

Let $L$ be an irreducible Anosov automorphism of $\TT^d$ with simple real eigenvalues with distinct absolute values. If $f$ is a $C^2$ volume preserving diffeomorphism $C^1$ close to $L$ and has the same Lyapunov exponents, then $f$ is $C^{1+\vep}$ conjugated to $L$ for some $\vep>0$.

\end{main}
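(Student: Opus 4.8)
The plan is to produce the conjugacy by structural stability and then bootstrap its regularity leaf by leaf along a full flag of invariant foliations, gluing the leafwise pieces with Journ\'e's lemma.

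Structural stability gives a bi-H\"older homeomorphism $h$, homotopic to the identity and $C^0$-close to it, with $h\circ f=L\circ h$. Write $|\mu_1|<\cdots<|\mu_d|$ for the eigenvalues of $L$, let $d_s$ be the stable dimension, and let $m$ be the $f$-invariant volume. A cone argument shows that $f$, being $C^1$-close to $L$, admits a finest dominated splitting $T\TT^d=E^1_f\oplus\cdots\oplus E^d_f$ into continuous line fields with $E^i_f$ uniformly close to the $i$-th eigenline of $L$; in particular the defect functions $\Phi_i:=\log\|Df\mid_{E^i_f}\|-\log|\mu_i|$ are $C^0$-small and H\"older. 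The flag subbundles $E^{\le j}_f:=E^1_f\oplus\cdots\oplus E^j_f$, $E^{\ge j}_f:=E^j_f\oplus\cdots\oplus E^d_f$ and the lines $E^i_f$ are uniquely integrable: the strong and full (un)stable ones classically, the intermediate ones because the large spectral gaps of $L$ make the relevant bunching conditions robust, so that the normal hyperbolicity theory of Hirsch--Pugh--Shub applies. One thus gets invariant foliations $\cW^{\le j}_f,\cW^{\ge j}_f,\cW^i_f$ with uniformly $C^2$ leaves (so $\cW^{\le d_s}_f=\cW^s_f$, $\cW^{\ge d_s+1}_f=\cW^u_f$), whose subflag holonomies are $C^1$ (indeed $C^{1+\vep}$). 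Each of these foliations has a topological characterization via sub-stable and sub-unstable sets, so $h$ maps $\cW^i_f,\cW^{\le j}_f,\cW^{\ge j}_f$ homeomorphically onto the corresponding linear foliations $\cW^i_L,\cW^{\le j}_L,\cW^{\ge j}_L$ of $\TT^d$; by irreducibility no eigenline of $L$ lies in a proper rational subspace, so $\cW^i_L$ --- hence $\cW^i_f$ --- is minimal.

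The core of the argument is that $h$ is uniformly $C^{1+\vep}$ along the leaves of each line foliation $\cW^i_f$. On a leaf of $\cW^i_L$ parametrized by arc length, $L$ acts by multiplication by $\mu_i$; differentiating $h\circ f=L\circ h$ along leaves then shows that $h|_{\cW^i_f}$ is leafwise $C^{1+\vep}$ iff the cohomological equation $\Phi_i=\psi_i\circ f-\psi_i$ has a H\"older solution $\psi_i$ (the leafwise derivative of $h$ is then $e^{c_i-\psi_i}$, and $h$ is reconstructed by integration along leaves and matched with the topological conjugacy, by the technique of de la Llave, Marco and Moriy\'on). By Liv\v{s}ic's theorem such a solution exists iff $\int\Phi_i\,dm'=0$ for every $f$-invariant $m'$ (the dominated splitting refines every Oseledets splitting), whereas equality of Lyapunov exponents only gives $\int\Phi_i\,dm=0$. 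To upgrade this, note first that Pesin's entropy formula and the hypothesis yield $h_m(f)=\sum_{|\mu_i|>1}\log|\mu_i|=h_{\mathrm{top}}(L)=h_{\mathrm{top}}(f)$, so $m$ is the unique measure of maximal entropy of $f$ (intrinsic ergodicity being inherited from $L$) and hence $h_*m$ is the Haar measure of $\TT^d$; equivalently $\log\mathrm{Jac}^uf=\sum_{i>d_s}\log\|Df\mid_{E^i_f}\|$ is cohomologous to the constant $h_{\mathrm{top}}$, so the conditionals of $m$ along $\cW^u_f$ are $e^{u}$ times leaf volume for some H\"older $u$. Since $h$ carries $\cW^u_f$ to $\cW^u_L$ and pushes these conditionals to the constant-density conditionals of $h_*m=$ Haar, $h$ is absolutely continuous along $\cW^u_f$-leaves with continuous positive Jacobian. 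Disintegrating further down the unstable subflag $\cW^{\ge d_s+1}_f\supset\cW^{\ge d_s+2}_f\supset\cdots\supset\cW^d_f$ (the subflag holonomies being $C^1$), and symmetrically along the stable subflag using that $m$ is also the SRB measure of $f^{-1}$, one reaches the one-dimensional conditionals along each $\cW^i_f$; these have H\"older positive densities which $h$ pushes to constant densities, so by the one-dimensional fundamental theorem of calculus $h|_{\cW^i_f}$ is leafwise $C^{1+\vep}$ on almost every leaf, hence --- by continuity and minimality of $\cW^i_f$ --- on every leaf. This furnishes the solutions $\psi_i$.

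I expect the main obstacle to be exactly this last point: controlling the iterated disintegrations along the intermediate subflags, i.e. checking that the successive quotient densities stay genuinely H\"older, that the one-dimensional conjugacy they induce along $\cW^i_f$ is really the leafwise restriction of $h$, and that it satisfies the correct cocycle relation (so the additive constant in the cohomological equation vanishes by $\int\Phi_i\,dm=0$); irreducibility of $L$ is used throughout, both to exclude the counterexamples of de la Llave and Gogolev and to secure the minimality and ergodicity needed for these Gibbs-measure arguments. Granting the core, the proof is finished with Journ\'e's regularity lemma applied up the flag: inside each leaf of $\cW^{\le j}_f$ the subfoliations $\cW^{\le j-1}_f$ and $\cW^j_f$ are transverse and span, so $h$ is $C^{1+\vep}$ along $\cW^{\le j}_f$, hence along $\cW^s_f=\cW^{\le d_s}_f$; symmetrically $h$ is $C^{1+\vep}$ along $\cW^u_f$; and since $\cW^s_f\pitchfork\cW^u_f$ are complementary, a final application of Journ\'e gives $h\in C^{1+\vep}(\TT^d)$. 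Finally $Dh$ sends each $E^i_f$ isomorphically onto the $i$-th eigenline of $L$, so $Dh$ is everywhere invertible and $h$ is a $C^{1+\vep}$ diffeomorphism conjugating $f$ to $L$.
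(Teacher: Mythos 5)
Your overall architecture (one-dimensional invariant foliations, $h_*m=$ Haar via uniqueness of the measure of maximal entropy, absolute continuity of $h$ along one-dimensional leaves with controlled densities, and Journ\'e's lemma to glue) is the same as the paper's, but there is a genuine gap at the structural step on which everything else rests: you assert at the outset that $h$ maps \emph{every} foliation of the flag --- including the intermediate line foliations $\cW^i_f$ and the strong flags $\cW^{\ge j}_f$ --- onto the corresponding linear foliations because they ``have a topological characterization via sub-stable and sub-unstable sets.'' This is false in general. Only the weak flag foliations $\cF^{s,f}_{1,i}$, $\cF^{u,f}_{1,j}$ are automatically preserved by the conjugacy (Lemma~\ref{l.centerintegrable}); the strong and intermediate one-dimensional foliations are \emph{not} characterized in a way that survives a merely H\"older conjugacy, and indeed even in dimension $3$ the conjugacy need not send $\cF^{su}_f$ to $\cF^{su}_L$ without further input --- the paper needs Lemma~\ref{l.3dstrongunstable}, and in higher dimension Gogolev's Lemma~\ref{l.inductioncenterleaf}, whose hypothesis is that $h$ is \emph{already} uniformly $C^{1+\vep}$ along $\cF^{u,f}_1,\dots,\cF^{u,f}_j$ before one can conclude $h(\cF^{u,f}_{j+1})=\cF^{u,L}_{j+1}$. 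So the correct argument is an induction in which foliation preservation and leafwise regularity are established alternately, starting from the weakest direction; your proposal takes the preservation for granted and thereby skips the hardest point (this is also where the danger of the de la Llave--Gogolev counterexamples is actually neutralized, not merely by ``minimality'').

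A second, related gap is the mechanism you propose for leafwise regularity. Your plan is to disintegrate the volume iteratively down the unstable subflag, using that ``the subflag holonomies are $C^1$,'' to reach one-dimensional conditionals with H\"older densities that $h$ pushes to constant densities. Absolute continuity of $h$ along the full unstable leaves (which you do get correctly from $h_*m=$ Haar and the Gibbs $u$-state property) does \emph{not} descend in this way: an absolutely continuous map of an unstable leaf can fail completely to be absolutely continuous along a one-dimensional subfoliation, and the absolute continuity of the conditionals of $m$ along an \emph{intermediate} expanding foliation $\cF^{u,f}_i$ is exactly the Gibbs $e$-state property, which is not automatic. It is obtained in the paper from the equality of the $i$-th exponents via the relative Pesin formula and its converse (Theorem~\ref{l.criterionofgibbs}, packaged as (B4)$\Rightarrow$(B1)/(B5$'$) in Theorem~\ref{main.technique}); note that with only the \emph{sum} of the unstable exponents matching, your full-unstable step would still hold while the one-dimensional conclusion would be false, so the individual exponent hypothesis must enter precisely at this point, which your disintegration sketch never uses. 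Once this engine is in place the detour through Liv\v{s}ic and the cohomological equations $\Phi_i=\psi_i\circ f-\psi_i$ is unnecessary (the paper deliberately avoids Liv\v{s}ic), and the final Journ\'e gluing and invertibility of $Dh$ are fine as you state them.
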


The condition of being $C^1$ close is needed in order to ensure the existence of the dominated decomposition for $f$ and the existence of corresponding one dimensional foliations, similar to the automorphism $L$. We also need that the conjugacy with the linear map preserves the weak stable and weak unstable foliations. We remark that the $C^1$ neighborhood that we need is in fact fairly large, by the results in \cite{FPS} it is enough to have an isotopy between $L$ and $f$ with diffeomorphisms preserving the dominated splitting into one-dimensional sub-bundles.

Reminding the parallel with the rigidity in Hyperbolic Geometry, the equivalent of our results would be the so-called {\it Entropy Rigidity Conjecture}. It was initially proposed by Katok in \cite{Ka}, where he also proved it in dimension two: if the measure of maximal entropy and the Liouville measure of the geodesic flow coincide (or equivalently if the metric entropy of the Liouville measure is the topological entropy), then the manifold must be locally symmetric.

The conjecture was later extended by Sullivan, Kaimanovich, in order to include other types of invariant measures (like the harmonic measure). There are some partial results in this direction by Flaminio \cite{Fl}, and some versions of the conjecture for Anosov flows by Foulon \cite{Fo}. Some good surveys on the topic of rigidity in Dynamics and Hyperbolic Geometry can be found in \cite{Fe,Has,Sp}.

In our case, since the Anosov diffeomorphisms have in general different rates of expansion or contraction in different directions, the equivalent of the Entropy Rigidity Conjecture would have to take into account all this directions, which in our case are represented by the one dimensional invariant foliations.

The topological entropy along each of these foliations is exactly the Lyapunov exponent of the linear map, or the logarithm of the absolute value of the corresponding eigenvalue (see \cite{Sa,SX09} for example). The place of the Liouville measure is taken by the volume, and the Lyapunov exponent of the volume along an one dimensional foliation replaces in our case the metric entropy. It is worth mentioning that the Lyapunov exponent and the metric entropy are related by the Ruelle inequality, and if the measure is absolutely continuous along the foliation (which turns out to happen in our case), then the Lyapunov exponent and the metric entropy are equal by the Pesin Formula.

Given an invariant measure for a diffeomorphism $f$, and a foliation $\cF$ with uniformly $C^1$ leaves which is preserved and uniformly expanded by $f$, one can define a {\it conditional metric entropy of $f$ and $\mu$ relative to $\cF$}, $h_{\mu}(f,\cF)$, for more details see Section 2. One also has a notion of measure which is absolutely continuous along the foliation, called Gibbs expanding state, see again Section 2 for more details. An alternative reformulation of Theorem \ref{main.higherdimension} is the following corollary, which probably shows better the connection of our result with the Entropy Rigidity Conjecture:

\begin{maincor}\label{maincor.ECvolume}
Let $L$ be an irreducible Anosov automorphism of $\TT^d$ with simple real eigenvalues with distinct absolute values. Suppose that $f$ is a $C^2$ diffeomorphism $C^1$ close to $L$, and one of the following holds:
\begin{enumerate}
\item
$f$ is volume preserving and the conditional metric entropies of $f$ and the volume relative to the one dimensional invariant foliations of $f$ are equal to the Lyapunov exponents of $L$ (i.e. the volume has maximal conditional entropy along each one dimensional foliation);
\item
the conditional metric entropies of $f$ and the Gibbs expanding states relative to the one dimensional invariant foliations of $f$ are equal to the Lyapunov exponents of $L$ (i.e. the Gibbs expanding states have maximal conditional entropy along each one dimensional foliation).
\end{enumerate}
Then $f$ is $C^{1+\vep}$ conjugated to $L$ for some $\vep>0$.
\end{maincor}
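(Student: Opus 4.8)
The plan is to deduce both cases from Theorem~\ref{main.higherdimension} by producing an $f$-invariant volume whose Lyapunov exponents coincide with those of $L$. Write $\chi_1<\chi_2<\dots<\chi_d$ for the Lyapunov exponents of $L$ (the logarithms of the absolute values of its eigenvalues $\mu_1,\dots,\mu_d$), let $E_1\oplus\cdots\oplus E_d$ be the dominated splitting of $f$ into one-dimensional sub-bundles furnished by the $C^1$-closeness to $L$, and let $\cF_i$ be the invariant $C^1$ foliation tangent to $E_i$; after reindexing, $\cF_i$ is uniformly expanded by $f$ for $i>k$ and by $f^{-1}$ for $i\le k$, with $k$ the stable index. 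I will use two facts established earlier (or quoted): each $\cF_i$ is minimal, because the one-dimensional eigen-foliations of an irreducible $L$ are minimal on $\TT^d$ and minimality is inherited through the topological conjugacy $f\sim L$; and the topological entropy of $f$ along $\cF_i$ (of $f^{-1}$ along $\cF_i$) equals $\chi_i$ (equals $-\chi_i$) when $i>k$ (when $i\le k$), see \cite{Sa,SX09}.

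Case (1) is short. Since $f$ preserves a volume and $\cF_i$ has $C^1$ leaves, that volume is absolutely continuous along $\cF_i$, hence is a Gibbs expanding state along $\cF_i$ --- for $f$ if $i>k$, for $f^{-1}$ if $i\le k$ --- so its conditional $\cF_i$-entropy equals $\int\log\|Df|_{E_i}\|\,d\Leb$ (respectively $-\int\log\|Df|_{E_i}\|\,d\Leb$). The hypothesis forces this to equal $\chi_i$ (respectively $-\chi_i$), so $\int\log\|Df|_{E_i}\|\,d\Leb=\chi_i$ for every $i$; because the dominated splitting is into lines, these numbers are exactly the Lyapunov exponents of $(f,\Leb)$. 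Thus $(f,\Leb)$ has the same exponents as $L$ and Theorem~\ref{main.higherdimension} applies.

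For case (2) I describe the situation along an unstable foliation $\cF_i$, $i>k$ (the case $i\le k$ is symmetric, working with $f^{-1}$). Let $\mu_i$ be the Gibbs expanding state along $\cF_i$; being such a state it is the equilibrium state of the potential $-\log\|Df|_{E_i}\|$ for the (uniformly expanded) leafwise dynamics and satisfies $h_{\mu_i}(f,\cF_i)=\int\log\|Df|_{E_i}\|\,d\mu_i$. By hypothesis this equals $\chi_i=h_{top}(f,\cF_i)$, so $\mu_i$ is simultaneously the equilibrium state of $-\log\|Df|_{E_i}\|$ and a measure of maximal entropy along $\cF_i$, i.e. the equilibrium state of the zero potential. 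Since equilibrium states for Hölder potentials over this expanding leafwise dynamics are unique, the two potentials differ by a constant and a coboundary, and evaluating against $\mu_i$ identifies the constant as $\chi_i$, giving $\log\|Df|_{E_i}\|=\chi_i+\psi_i\circ f-\psi_i$ with $\psi_i$ Hölder; the equilibrium-state theory (together with minimality of $\cF_i$) provides $\psi_i$ as a globally defined function on $M$. Rewriting the $i\le k$ identities for $f^{-1}$ as identities for $f$, the same equation holds for every $i$.

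Summing over $i$ and using $\sum_i\chi_i=\log|\det L|=0$ gives $\log|\det Df|=\Psi\circ f-\Psi$ with $\Psi=\sum_i\psi_i$ continuous. Since $f$ is $C^2$, $\log|\det Df|$ is $C^1$, and a continuous coboundary over an Anosov $C^2$ diffeomorphism has a $C^1$ transfer function by Livsic regularity; hence $\Psi$ may be taken $C^1$ and $m:=ce^{-\Psi}\Leb$ (normalized) is an $f$-invariant probability with positive $C^1$ density. Integrating $\log\|Df|_{E_i}\|=\chi_i+\psi_i\circ f-\psi_i$ against the invariant measure $m$ yields $\int\log\|Df|_{E_i}\|\,dm=\chi_i$ for every $i$, so $(f,m)$ has the same Lyapunov exponents as $L$; Theorem~\ref{main.higherdimension}, applied to the smooth invariant volume $m$, completes the proof. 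The main obstacle is precisely this rigidity step in case (2): upgrading the scalar identity $h_{\mu_i}(f,\cF_i)=h_{top}(f,\cF_i)$ to a global cohomological equation for $\log\|Df|_{E_i}\|$. This needs the equilibrium-state formalism for the one-dimensional expanded foliations $\cF_i$ (existence and uniqueness of Gibbs expanding states and of $\cF_i$-measures of maximal entropy, and the ``equal equilibrium states $\Rightarrow$ cohomologous potentials'' principle), plus the regularity bookkeeping that turns the Hölder transfer functions $\psi_i$ into an honest $C^1$ invariant density; everything else is either quoted or routine.
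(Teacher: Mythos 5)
Your case (1) hinges on the assertion that, because $f$ preserves the volume and $\cF_i$ has $C^1$ leaves, the volume is automatically a Gibbs expanding state along each one-dimensional foliation $\cF_i$. This is false: smoothness of the leaves says nothing about the transverse behaviour of the disintegration, and for the intermediate (weak or strong) one-dimensional expanding foliations the absolute continuity of the disintegration of a preserved volume is precisely the nontrivial property which, by the converse Pesin formula (Theorem~\ref{l.criterionofgibbs}), is \emph{equivalent} to $h_{\Leb}(f,\cF_i)=\lambda_i(f,\Leb)$ --- i.e.\ to what you are trying to prove. When the exponents of $(f,\Leb)$ differ from those of $L$ these disintegrations are typically singular (this is the non-absolute-continuity phenomenon along expanding foliations, cf.\ \cite{SX09}), so your step would beg the question. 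The conclusion of case (1) is nevertheless correct, and the paper reaches it with only the inequality direction: Ruelle's inequality along each $\cF_i$ gives $\chi_i=h_{\Leb}(f,\cF_i)\le\lambda_i(f,\Leb)$, and if any of these were strict, summing over the unstable indices would give $\lambda^u(f,\Leb)>\lambda^u(L)$, contradicting $\lambda^u(f,\Leb)=h_{\Leb}(f)\le h_{top}(f)=h_{top}(L)=\lambda^u(L)$ (Pesin formula plus the variational principle, or Ma\~n\'e's volume-growth argument); hence every exponent of $(f,\Leb)$ equals the corresponding exponent of $L$ and Theorem~\ref{main.higherdimension} applies.

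Case (2) has a more serious gap: the entire construction of a smooth invariant volume rests on a leafwise thermodynamic formalism that is neither in the paper nor standard. You assume the Gibbs expanding state along $\cF_i$ is unique (the paper only knows that $\Gibb^e(f,\cF_i)$ is a nonempty compact convex set, and hypothesis (2) must be read as a condition on \emph{every} ergodic Gibbs $e$-state), you treat it as the unique equilibrium state of $-\log\|Df|_{E_i}\|$ for the ``leafwise dynamics'', and you invoke the principle ``equal equilibrium states $\Rightarrow$ cohomologous potentials'' in this conditional-entropy setting. That principle is a theorem for genuine equilibrium states of transitive hyperbolic systems, proved through Livsic and periodic data; no analogue is established for conditional entropies along an expanding foliation, and the identity $\log\|Df|_{E_i}\|=\chi_i+\psi_i\circ f-\psi_i$ you extract from it is essentially equivalent to the periodic-data consequence of the very corollary being proved --- so all the difficulty is hidden in that unjustified step. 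The paper's route avoids it entirely and needs no invariant volume for $f$ (important, since in case (2) $f$ is not assumed conservative): because every invariant measure of $L$ has the same exponents, condition (B4) of Theorem~\ref{main.technique} holds for each ergodic $\mu_1\in\Gibb^{u,1}(f)$ paired with $h_*\mu_1$, where $h$ is the topological conjugacy; then (B5$^\prime$) makes $h$ uniformly $C^{1+\vep}$ along $\cF^{u,f}_1$ on $\supp\mu_1$, while (B1) together with the irreducibility of $L$ forces $h_*\mu_1=\Leb$, hence $\supp\mu_1=\TT^d$; induction via Lemma~\ref{l.inductioncenterleaf} handles the remaining one-dimensional subfoliations, the same argument applies to $f^{-1}$ on the stable side, and Journ\'e's lemma concludes. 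If you want to keep your structure, case (2) should be rewritten along these lines rather than through a Livsic-type cohomological equation.
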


\subsection{Partially hyperbolic diffeomorphisms}

In the case of partially hyperbolic diffeomorphism we do not have in general the structural stability, because of the lack of hyperbolicity in the center direction. However there exists a weak form of structural stability: under some conditions on the center foliation, then every $C^1$ perturbation is {\it leaf conjugate} with the initial system, meaning that the 'conjugating' homeomorphism preserves the original dynamics modulo the center leaves.

In general one cannot hope that the dynamics on the center leaves is preserved by perturbations. This is why the usual notion of rigidity for partially hyperbolic diffeomorphisms is the regularity of the leaf conjugacy with the regular model, and in particular one should have some regularity of the center foliation.

Let us comment more about partially hyperbolic diffeomorphisms. By the stable manifold theorem, the stable and unstable bundles of a partially hyperbolic diffeomorphism are uniquely integrable: they are tangent to the unique stable and unstable foliations respectively. But the center bundle is not always integrable even if $\dim E^c=1$ (see \cite{HHUcoh}).

In this paper, we only consider the partially hyperbolic diffeomorphisms which are \emph{dynamically coherent}, that is, the center bundle $E^c$, center stable bundle $E^{cs}=E^s\oplus E^c$ and the center unstable bundle $E^{cu}=E^c\oplus E^u$ are tangent to invariant foliations: the center foliation $\cF^{c}$, the center stable foliation $\cF^{cs}$ and the center unstable foliation $\cF^{cu}$ respectively. By \cite{HPS70,HPS77}, for a dynamically coherent partially hyperbolic diffeomorphism $f$, if the center foliation of $f$ is \emph{plaque expansive}, then every diffeomorphism $C^1$ close to $f$ is also dynamically coherent, and is leaf conjugate to $f$. Moreover, the center foliation is plaque expansive if the center foliation is $C^1$.

The center foliation, when it exists, in general is not smooth, and indeed, it may have a much more complicated behavior than the stable and unstable foliations. As observed in \cite{A,AS,BP74,PSh72}, the stable and unstable foliations are always absolutely continuous, that is, the disintegration of Lebesgue measure of the manifold along the leaves is equivalent to the Lebesgue measure of the leaves for almost every leaf. But this in general is false for the center foliation of volume preserving partially hyperbolic diffeomorphisms.

The first counterexample was constructed by Katok in \cite{M}, for a $C^2$ family of 2 dimensional volume preserving $C^2$ Anosov diffeomorphisms $\gamma_t$ ($t\in I=[0,1]$). The map $\gamma(x,t)=(\gamma_t(x),t)$ defines a $C^2$ partially hyperbolic diffeomorphism on $\TT^2\times I$, and this partially hyperbolic diffeomorphism is dynamically coherent. Denote by $h_t$ the unique conjugacy, in the isotopy class of identity map, between $f_0$ and $f_t$, then the center leaf passing through $(x,0)$ is $\cF^c(x,0)=\{(h_t(x),t):\ t\in I\}$. If a family of fixed points $p_t$ of $f_t$, which depends smoothly on $t\in I$ (it is in fact a center leaf) has the property that the derivative $D\gamma_t(p_t)$ has different eigenvalues for different values of $t\in I$, then Katok shows that the center foliation $\cF^c$ is not absolutely continuous, and furthermore there exists a full volume subset of $\TT^2\times I$ which intersects every center leaf in a unique point.

Many subsequent results showed that for volume preserving diffeomorphisms the non-absolute continuity of the center foliation is indeed a quite general phenomena, for instance see \cite{RW,SW,SX09,HP,AVW1,AVW2,VY2}. There is a different situation for diffeomorphisms which do not preserve the volume, see Viana and Yang \cite{VY1}.

This is why it is interesting to find conditions that give the regularity of the center foliation, and there are only few results in this direction. For example, for the volume preserving skew product case, it was shown in \cite{AVW2} that if all the center Lyapunov exponents are vanishing almost every where, then the center foliation is absolutely continuous, and with an additional assumption (for example, if the center leaf is homeomorphic to a circle or to a two dimensional sphere), then the center foliation is indeed smooth.

In \cite{AVW1} similar results are obtained for perturbations of time one maps of geodesic flows of hyperbolic surfaces: if the center exponent vanishes, and the center foliation is absolutely continuous, then not only the center foliation is smooth, but the perturbation must be also the time one map of an Anosov flow. This result was generalized by Butler and Xu \cite{BX} for perturbations of geodesic flows on higher dimensional manifolds with constant negative curvature (see also \cite{Bu}).

In these rigidity-type results, one assumes that the center Lyapunov exponents are zero, eventually together with other conditions like accessibility and center bunching, and uses an {\it Invariance Principle} in order to obtain the regularity of the center foliation. Unlike the results described above, we can obtain rigidity results for partially hyperbolic diffeomorphisms under some conditions on the stable and unstable Lyapunov exponents, and without any condition on the center, nor on accessibility or center bunching.

The basic ideas behind the two types of rigidity results which we obtain, the ones for Anosov diffeomorphisms and the ones for partially hyperbolic diffeomorphisms, are very similar, the only difference is that instead of obtaining smooth conjugacy, we will obtain smooth center holonomy. In fact one can view the conjugacy between Anosov diffeomorphisms as a particular case of a center holonomy for a partially hyperbolic diffeomorphism.

We first illustrate our ideas with the following example which is similar to the Katok's construction, and which is a corollary of Theorem \ref{main.higherdimension}.

\begin{maincor}\label{maincor.Katokexample}

Let $L$ be an irreducible linear hyperbolic automorphism of $\TT^d$ with simple real eigenvalues with distinct absolute values. Consider a $C^2$ family of $C^2$ volume preserving diffeomorphisms $\gamma_t$ ($t\in I=[0,1]$) inside a $C^1$ small neighborhood of $L$, such that the diffeomorphisms $\gamma_t$ satisfy the hypothesis of Theorem \ref{main.higherdimension}. Then the $C^2$ volume preserving diffeomorphism $\gamma: \TT^d\times I\to \TT^d\times I$, $\gamma(t,x)=(\gamma_t(x),t)$ is partially hyperbolic and dynamically coherent.

If for every $t\in I$, the $d$ exponents of $\gamma_t$ coincide with the exponents of $L$, or equivalently if the average exponents of $\gamma$ in the stable and unstable direction coincide with the exponents of $L$, then the center foliation of $\gamma$ is $C^{1+\vep}$.

In the opposite case, if for different values of $t\in I$, the sum of the unstable Lyapunov exponent of $\gamma_t$ takes different values, then there is a full volume subset of $\TT^d\times I$ which intersects every center leaf in a unique point.

\end{maincor}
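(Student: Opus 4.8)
The plan is to deduce the first conclusion from Theorem~\ref{main.higherdimension} applied fiberwise, and the second from Katok's non--absolute--continuity argument. First I record the structural facts. Since $L$ has simple real eigenvalues of pairwise distinct absolute values and $\{\gamma_t\}_{t\in I}$ lies in a fixed small $C^1$--neighborhood of $L$, every $\gamma_t$ is Anosov, isotopic to $L$, and carries a dominated splitting $T\TT^d=E^1_t\oplus\cdots\oplus E^d_t$ into one--dimensional bundles, with constants uniform in $t$. The map $\gamma(x,t)=(\gamma_t(x),t)$ is $C^2$ and volume preserving (its Jacobian is $\det D_x\gamma_t\equiv1$) and preserves the fibration over $I$; hence $\gamma$ admits a dominated, indeed absolute partially hyperbolic, splitting $T(\TT^d\times I)=E^{ss}\oplus E^c\oplus E^{uu}$, where $E^{ss},E^{uu}$ are the sums of the contracting, resp.\ expanding, one--dimensional bundles of the fibers and $E^c$ is the unique invariant line field transverse to $T\TT^d$. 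The center has vanishing Lyapunov exponent and $\|D\gamma^n|_{E^c}\|$ is uniformly bounded, so $E^c$ is $C^{1+\vep}$ for some $\vep>0$; it is therefore uniquely integrable and gives the center foliation $\cF^c$ with $C^{1+\vep}$ leaves. Dynamical coherence is Katok's construction \cite{M}: $\cF^c$ is the foliation by the graphs $\{(h_t(x),t):t\in I\}$, where $h_t$ is the unique conjugacy isotopic to the identity between $\gamma_0$ and $\gamma_t$ (it exists and is continuous in $t$ since the $\gamma_t$ are mutually $C^1$--close), this foliation is $\gamma$--invariant and tangent to $E^c$, and $\cF^{cs},\cF^{cu}$ are obtained by saturating $\cF^c$ with the stable, resp.\ unstable, manifolds of the fibers; plaque expansiveness being immediate, \cite{HPS70,HPS77} apply as well.

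For the first case, assume the $d$ exponents of every $\gamma_t$ coincide with those of $L$. (This is equivalent to the condition on the averages: by Pesin's formula, structural stability and the variational principle applied to the one--dimensional sub--foliations of $W^u$ and $W^s$, each partial sum of the unstable exponents of $\gamma_t$ taken from the top, and each partial sum of the stable exponents taken from the bottom, is dominated by the corresponding sum for $L$; since the exponents depend continuously on $t$, equality of the averages then forces pointwise equality.) Then each $\gamma_t$ satisfies all the hypotheses of Theorem~\ref{main.higherdimension}, so there is a unique $C^{1+\vep}$ diffeomorphism $\phi_t$ of $\TT^d$, isotopic to the identity, with $\phi_t\circ\gamma_t=L\circ\phi_t$, the $C^{1+\vep}$ norms being uniform in $t$. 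Put $\Phi(x,t)=(\phi_t(x),t)$; this homeomorphism conjugates $\gamma$ to the trivial skew product $(x,t)\mapsto(Lx,t)$ and carries $\cF^c$ onto the latter's (smooth) vertical center foliation. Now $\Phi$ is uniformly $C^{1+\vep}$ along the smooth slices $\TT^d\times\{t\}$ (there it is $\phi_t$), while along each leaf of $\cF^c$ its $\TT^d$--component is constant, since $\phi_t(h_t(x))=\phi_0(x)$. As the slices and the $C^{1+\vep}$ leaves of $\cF^c$ are transverse and complementary, Journé's lemma gives that $\Phi$ is $C^{1+\vep}$; its differential is invertible (on a slice it is $D\phi_t$, and it carries $E^c$ onto $TI$ nontrivially), so $\Phi$ is a $C^{1+\vep}$ diffeomorphism and $\cF^c=\Phi^{-1}(\text{vertical foliation})$ is $C^{1+\vep}$.

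For the opposite case, suppose $t\mapsto\Lambda^u(\gamma_t):=\sum_i\lambda^u_i(\gamma_t)=\int_{\TT^d}\log|\det(D\gamma_t|_{E^u})|\,d\Leb$ is non--constant. For each $t$, $\Leb$ is the ergodic SRB measure of $\gamma_t$, with conditionals along $\cF^u_{\gamma_t}$ given by the usual unstable--Jacobian product formula. Inside a center--unstable leaf $\Sigma$ of $\gamma$ one has the unstable sub--foliation $\{W^\Sigma_t\}_{t\in I}$ and the transverse center foliation, and, since $\gamma$ maps $W^\Sigma_t$ to $W^{\gamma(\Sigma)}_t$ via $\gamma_t$, the center holonomies $\pi^\Sigma_{t_0,t_1}\colon W^\Sigma_{t_0}\to W^\Sigma_{t_1}$ satisfy $\pi^{\gamma(\Sigma)}_{t_0,t_1}\circ\gamma_{t_0}=\gamma_{t_1}\circ\pi^\Sigma_{t_0,t_1}$. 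Were these holonomies absolutely continuous, iterating this relation and applying Birkhoff's theorem to $\log|\det(D\gamma_t|_{E^u})|$ along a Lebesgue--generic unstable orbit in $W^\Sigma_{t_0}$ and in $W^\Sigma_{t_1}$ would force the holonomy Jacobian over $\gamma^n(\Sigma)$ to grow like $e^{\,n(\Lambda^u(\gamma_{t_1})-\Lambda^u(\gamma_{t_0}))}$, impossible when $\Lambda^u(\gamma_{t_0})\neq\Lambda^u(\gamma_{t_1})$. Thus for almost every $\Sigma$ the center holonomy between unstable leaves in distinct slices is totally singular; refining this exactly as in Katok's argument \cite{M} (see also \cite{RW,SW,HP}) shows that the disintegration of $\Leb$ along $\cF^c$ is atomic, with exactly one atom on each center leaf, and the union of these atoms is the required full--volume transversal.

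The routine parts are the structural statements and, granting Theorem~\ref{main.higherdimension}, the first case — there the only delicate point is the appeal to Journé's lemma with center leaves that are merely $C^{1+\vep}$, which is precisely the setting of the non--integer version of that lemma. The genuine obstacle is the last step: promoting the elementary fact that absolute continuity of the center holonomy is incompatible with a non--constant unstable exponent to the sharp conclusion that the disintegration along $\cF^c$ is \emph{atomic} — one atom per leaf, hence an honest full--volume transversal rather than a mere failure of absolute continuity. This atomicity is the subtle ingredient of Katok's construction: one exhibits the transversal as a Birkhoff--generic set for the measures $\Leb\times\delta_t$ intersected with the locus where the relevant center--holonomy densities degenerate, and must check that precisely one point of each center leaf survives.
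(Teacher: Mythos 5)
Your treatment of the first case is essentially correct and uses the same key input as the paper: Theorem \ref{main.higherdimension} applied to each $\gamma_t$, plus a Journ\'e-type argument. The paper applies the foliation version (Lemma \ref{l.Jourfoliation}) directly to the center foliation, using that the center holonomy between slices is $h_{\gamma_{t_2}}^{-1}\circ h_{\gamma_{t_1}}$ and hence uniformly $C^{1+\vep}$; you instead build the global map $\Phi(x,t)=(\phi_t(x),t)$ and apply Journ\'e for maps, which amounts to the same thing (in both versions one must, as you do, take the uniformity in $t$ of the $C^{1+\vep}$ bounds for the conjugacies from Theorem \ref{main.higherdimension} for granted). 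The structural claims (partial hyperbolicity, dynamical coherence, the Katok description of the center leaves as graphs $t\mapsto(h_t(x),t)$) also match the paper.

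The second case, however, has a genuine gap, and you flag it yourself: you reduce everything to the claim that the disintegration of $\Leb$ along $\cF^c$ is atomic with exactly one atom per leaf, and you do not prove it -- you only sketch a ``degenerating holonomy density'' picture and defer to Katok's construction. That atomicity statement is both unproved in your text and stronger than what is needed; moreover, the mechanism you propose (singularity of the center holonomy between unstable leaves inside a center-unstable leaf) does not by itself produce a full-volume transversal. The paper closes this step with a short direct argument that bypasses disintegrations entirely: let $\Gamma_t\subset\TT^d\times\{t\}$ be the Birkhoff regular points of $\gamma_t$ for $\Leb$, so $\Gamma=\bigcup_t\Gamma_t$ has full volume by Fubini. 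If one center leaf contained two points $(x_1,t_1)\in\Gamma_{t_1}$ and $(x_2,t_2)\in\Gamma_{t_2}$, then since the center holonomy $\cH^c_{t_1,t_2}=h_{\gamma_{t_2}}^{-1}\circ h_{\gamma_{t_1}}$ conjugates $\gamma_{t_1}$ to $\gamma_{t_2}$, it sends the orbit of $(x_1,t_1)$ to the orbit of $(x_2,t_2)$, hence pushes the empirical measures of the first point to those of the second; letting $n\to\infty$ and using regularity of both points gives $(\cH^c_{t_1,t_2})_*\Leb=\Leb$. This is impossible: entropy is a conjugacy invariant, and by Pesin's formula equal entropies would force $\sum_i\lambda^u_i(\gamma_{t_1})=\sum_i\lambda^u_i(\gamma_{t_2})$, contradicting the hypothesis. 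Thus $\Gamma$ meets every center leaf in at most one point, which is exactly the required transversal. To make your proof complete you need either this argument or an actual proof of the atomicity claim; as written, the ``genuine obstacle'' you name is left open.
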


By average exponents we mean the integral of the exponents with respect to the volume (in this case the volume is not ergodic, so the exponents may depend on the point). This result can be extended to the more general case of perturbations of skew products over Anosov diffeomorphisms.

Remember that, if $f_0\in\Diff^1(\TT^d\times N): f_0(x,y)=(L(x),h_x(y))$ is a partially hyperbolic diffeomorphism with $\{\cdot\}\times N$ to be the center foliation, then $f$ is plaque expansive, and for any $f$ $C^1$ close to $f_0$, $f$ is dynamically coherent.

\begin{main}\label{main.cneterfible}

Let $L$ be a two dimensional linear Anosov map on $\TT^2$, let $N$ be a compact Riemannian manifold without boundary,
and let $f_0: \TT^2\times N \to \TT^2\times N$ be a volume preserving partially hyperbolic skew product diffeomorphism
$$
f_0((x,y))=(L(x),h_x(y)),
$$
with $\{\cdot\}\times N$ corresponding to the center foliation. If $f\in \Diff^2(\TT^2\times N)$ is sufficiently $C^1$ close to $f_0$, preserves the volume, and the average strong stable and strong unstable Lyapunov exponents of $f$ coincide with the exponents of $L$, then the center foliation of $f$ is $C^{1+\vep}$ for some $\vep>0$, and $f$ is $C^{1+\vep}$ conjugated to a true skew product over $L$. Moreover, if $N=S^1$, and $f$ is $C^{\infty}$, then $f$ is $C^\infty$ conjugated to a true skew product over $L$ with rotations on the center fibers: a diffeomorphism $f^\prime: \TT^2\times S^1 \to \TT^2\times S^1$,
$$
f^\prime((x,y))=(L(x),y+\alpha(x))
$$
for some $C^{\infty}$ function $\alpha:\TT^2\rightarrow S^1$.

\end{main}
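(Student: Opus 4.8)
The plan is to reduce the theorem to the $C^{1+\vep}$ regularity of the center foliation $\cF^c_f$ and to obtain the latter from the exponent hypothesis by the mechanism already used for Theorem~\ref{main.higherdimension}, with the one--dimensional invariant foliations of the Anosov case replaced by the one--dimensional strong stable and strong unstable foliations of $f$ --- so that, instead of a smooth conjugacy, we obtain smooth stable and unstable holonomies between center leaves. \textbf{(Step 1: reductions.)} Since $f$ is $C^1$--close to the dynamically coherent, plaque--expansive skew product $f_0$ with compact center leaves, $f$ is dynamically coherent and leaf conjugate to $f_0$; hence $f$ carries invariant foliations $\cF^{ss}_f,\cF^{uu}_f$ (one--dimensional), $\cF^c_f,\cF^{cs}_f,\cF^{cu}_f$, its center leaves are graphs over $N$, and they fibre $\TT^2\times N$ over a two--torus $B$ on which $f$ descends to a homeomorphism $\bar f$ topologically conjugate to $L$, with $\pi_f^{-1}(\cW^s_{\bar f})=\cF^{cs}_f$ and $\pi_f^{-1}(\cW^u_{\bar f})=\cF^{cu}_f$ for the bundle projection $\pi_f\colon\TT^2\times N\to B$. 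If $\cF^{cs}_f$ and $\cF^{cu}_f$ are $C^{1+\vep}$, then so is $\cF^c_f=\cF^{cs}_f\cap\cF^{cu}_f$, the bundle $\pi_f$ becomes $C^{1+\vep}$ (hence, being $C^0$--close to the trivial one, $C^{1+\vep}$--isomorphic to $\TT^2\times N\to\TT^2$), and $\bar f$ becomes a $C^{1+\vep}$ Anosov diffeomorphism of $B$ preserving the absolutely continuous measure $(\pi_f)_*\Leb$; since $D\pi_f$ conjugates $Df|_{E^{uu}}$ to $D\bar f|_{E^u}$ up to a continuous coboundary, the average unstable exponent of $\bar f$ with respect to $(\pi_f)_*\Leb$ equals that of $L$, so by the classical two--dimensional rigidity (\cite{MM1,L87,MM2,LM}) --- which applies since $\bar f$ preserves an absolutely continuous measure with unstable exponent equal to that of $L$ --- $\bar f$ is $C^{1+\vep}$--conjugate to $L$, and composing this (times $\id_N$) with the trivialisation conjugates $f$ to a genuine skew product over $L$.

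\textbf{(Step 2: exponents force Gibbs states.)} The foliation $\cF^{uu}_f$ is one--dimensional and uniformly expanded, and since $f$ is homotopic to a skew product over $L$ whose one--dimensional unstable matches that of $L$, its partial topological entropy is $\lambda^u:=\log|\lambda^u_L|$ (cf.\ \cite{Sa,SX09}). Since $f\in\Diff^2$ preserves $\Leb$, Pesin's formula holds; comparing it with the Ruelle inequality for the partial entropy along $\cF^{uu}_f$ and with the bound on $h_\Leb(f)$ contributed by the remaining invariant directions (Section~2), the hypothesis $\int\log\|Df|_{E^{uu}}\|\,d\Leb=\lambda^u$ forces equality in that Ruelle inequality, i.e.\ $\Leb$ is a Gibbs expanding state along $\cF^{uu}_f$; symmetrically, $\Leb$ is a Gibbs contracting state along $\cF^{ss}_f$. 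Inserting the explicit product formula for the conditional densities of a Gibbs expanding state back into the $f$--invariance and absolute continuity of $\Leb$ yields that $\log\|Df|_{E^{uu}}\|-\lambda^u$, and likewise $\log\|Df|_{E^{ss}}\|-\log|\lambda^s_L|$, is a continuous coboundary; in particular the strong stable and strong unstable periodic data of $f$ are exactly those of the linear model.

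\textbf{(Step 3: smoothing the holonomies and the circle case.)} In a foliated chart for $\cF^{cu}_f$ its leaves are the level sets of a continuous function constant along strong unstable and along center leaves, so it is enough to show that its restriction to each strong stable arc is $C^{1+\vep}$ with uniform H\"older control: along a strong stable leaf $f^{-1}$ acts as a $C^2$ expanding map that this coordinate conjugates to multiplication by $|\lambda^s_L|^{-1}$, and by Step~2 the return data along periodic strong stable leaves is exactly the linear one, so de~la~Llave's solution of the leafwise cohomological equation makes the coordinate $C^{1+\vep}$ on each strong stable leaf; a Journ\'e--type lemma then combines this with the analogous statement in the strong unstable direction to give that $\cF^{cu}_f$, and symmetrically $\cF^{cs}_f$, is $C^{1+\vep}$, whence $\cF^c_f$ is $C^{1+\vep}$ and Step~1 finishes the general assertion. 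If $N=S^1$ then $\Jac f_0\equiv1$ already forces each $h_x$ to be a rotation, so $f_0(x,y)=(L(x),y+\alpha(x))$; running Steps~1--3 with $C^{1+\vep}$ upgraded to $C^\infty$ (bootstrapping the leafwise cohomological equations, cf.\ Remark~\ref{rk.analytic}) conjugates $f$ to $(x,y)\mapsto(L(x),g_x(y))$ with $g_x$ a $C^\infty$ circle diffeomorphism depending smoothly on $x$; disintegrating the (smooth, invariant) image of $\Leb$ over $x$ gives smooth fibre measures $\nu_x$ with $(g_x)_*\nu_x=\nu_{L(x)}$, and conjugating each $g_x$ by the smoothly $x$--dependent cumulative distribution function of $\nu_x$ turns it into a Lebesgue--preserving, hence rigid, circle diffeomorphism $y\mapsto y+\beta(x)$ with $\beta\in C^\infty(\TT^2,S^1)$.

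The hard part is Step~2 together with the beginning of Step~3: extracting from a Gibbs--state property along the \emph{one--dimensional} sub--foliations $\cF^{ss}_f,\cF^{uu}_f$ --- which a priori controls only conditional measures --- first the rigidity of the periodic data and then genuine $C^{1+\vep}$ regularity of the \emph{codimension--one} foliations $\cF^{cs}_f,\cF^{cu}_f$; both the equality case of the partial Ruelle inequality and the ensuing de~la~Llave/Journ\'e smoothing must be carried out with uniform estimates transverse to, and in families parametrised by, the center leaves.
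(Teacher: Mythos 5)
Your Step 2 is where the argument breaks. Since $\dim E^{u}=1$ here, the strong unstable foliation is the whole unstable foliation, and for a volume preserving $C^2$ diffeomorphism the Lebesgue measure is \emph{automatically} a Gibbs expanding state along $\cF^{uu}_f$ (this is just the classical Gibbs $u$-state property); so "equality in the partial Ruelle inequality'' extracts no information whatsoever from the exponent hypothesis. More seriously, the ensuing deduction --- that $\log\|Df|_{E^{uu}}\|-\lambda^u_L$ is a continuous coboundary and that the strong stable/unstable periodic data of $f$ equal the linear data --- does not follow from the Gibbs state property: if it did, it would hold for \emph{every} volume preserving perturbation, including those whose average unstable exponent differs from $\lambda^u_L$, which is absurd. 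In the Anosov case the paper obtains periodic-data statements (Corollary~\ref{maincor.periodicpoints}) only because there is a conjugacy $h$ carrying one expanding foliation and one Gibbs state to another; in the skew-product setting there is only the semiconjugacy $\pi$ collapsing center leaves, there is no leafwise Livshitz theorem, and the paper explicitly designs Theorem~\ref{main.technique} and the invariance principle so as to avoid Livshitz-type arguments. Consequently your Step 3 (de la Llave-style smoothing of a linearizing coordinate along strong stable leaves from "linear periodic data'') rests on an unproved, and at this stage unprovable, cohomological rigidity, and your proposal never actually uses the exponent hypothesis in a way that could produce regularity transverse to the center.

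What the paper does instead, and what is missing from your outline, is an entropy comparison through $\pi$: by Ledrappier--Young and the hypothesis, $h_{\Leb}(f,\cF^{u,f})=\lambda^u_L$; the invariance principle (Theorem~\ref{p.partialentropy}) gives $h_{\Leb}(f,\cF^{u,f})\leq h_{\nu}(L,\cF^u)$ for $\nu=\pi_*\Leb$, and Proposition~\ref{p.base} gives $h_\nu(L)\leq\lambda^u_L$ with equality only for the Haar measure; equality throughout forces $\nu=\Leb_{\TT^2}$ and the $c$-invariance of $\Leb$, i.e.\ the conditional measures of $\Leb$ on unstable plaques of $f$ project under $\pi$ exactly to Lebesgue on unstable segments of $L$. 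Since both families are absolutely continuous with uniformly $C^1$ densities on one-dimensional arcs, $\pi$ restricted to strong unstable plaques is a uniform $C^2$ diffeomorphism, hence the center holonomy inside center-unstable leaves is uniformly $C^2$, and Journ\'e-type lemmas give the $C^{1+\vep}$ regularity of $\cF^{c}_f$; nothing in your Steps 2--3 substitutes for this mechanism. Two further points: in the $N=S^1$ case you also need the center \emph{leaves} to be $C^\infty$, which does not follow from smooth holonomies alone --- the paper gets it by showing the invariant continuous disintegration along center circles forces a neutral center, hence $\infty$-bunching and smooth leaves via Pugh--Shub--Wilkinson; and in your Step 1 the references \cite{MM1,L87,MM2,LM} concern periodic data, whereas what is needed (and what the paper invokes) is the two-dimensional exponent rigidity for a quotient map that is only $C^{1+\vep}$. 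The fiberwise conjugation to rotations via the cumulative distribution function in your last step does match the paper's argument.
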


The regularity of the center foliation can be obtained under weaker conditions, $f$ has to be of skew product type over the Anosov automorphism of the two-torus $L$. We say that a partially hyperbolic diffeomorphism $f$ on $M$ is \emph{skew product type over the Anosov diffeomorphism (or more generally homeomorphism) $\overline f$} on $\overline M$ if $f$ is dynamically coherent, the center foliation forms a continuous fiber bundle over $\overline M$ with compact fiber, and the continuous projection $\pi$ from $M$ to $\overline M$ along the center foliation semiconjugates $f$ and $\overline f$.

In general, if $f:M\rightarrow M$ is a partially hyperbolic diffeomorphism which is dynamically coherent and the center foliation $\cF^c$ forms a fiber bundle with compact fiber, then the map $\overline f$ induced by $f$ on the quotient space $\overline M=M/\cF^c$ is an Anosov homeomorphism (see Viana~\cite{Almost}, Section 2.2 of \cite{VY1} and Section 4 of \cite{G}), in particular, $\overline{f}$ is an expansive homeomorphism.

If furthermore $\dim(E^u)=\dim(E^s)=1$, then the quotient space is a two dimensional surface. By a generalization of theorems of Franks~\cite{Fra70} and Newhouse~\cite{N70}, or making use of the topological classification of expansive homeomorphisms of surfaces by Lewowicz \cite{Lew} and Hiraide \cite{H}, one can show that the quotient space $\overline{M}$ is homeomorphic to a two torus, and moreover, $\overline{f}$ is conjugated to an Anosov automorphism of the torus $L$.

We can generalize the Theorem \ref{main.cneterfible} to partially hyperbolic diffeomorphisms with higher dimensional Anosov base, but unfortunately we have to assume a stronger condition on the Anosov part. Remember that given an invariant measure for a diffeomorphism $f$, and a foliation $\cF$ with uniformly $C^1$ leaves which is preserved and uniformly expanded by $f$, one can define a conditional metric entropy of $f$ and $\mu$ relative to $\cF$, for more details see Section 2. We have the following result.

\begin{main}\label{main.higherbase}

Let $L$ be a linear Anosov diffeomorphism over $\TT^d$, irreducible and with simple real eigenvalues with distinct absolute values, $N$ is a compact Riemannian manifold without boundary, and $f_0: \TT^d\times N \to \TT^d\times N$ be a volume preserving partially hyperbolic skew product diffeomorphism
$$
f_0((x,y))=(L(x),h_x(y)),
$$
with $\{\cdot\}\times N$ corresponding to the center foliation. Suppose that $f\in \Diff^2(\TT^d\times N)$ preserves the volume and is sufficiently $C^1$ close to $f_0$, such that the unstable and stable foliations of $f$ are subfoliated by one dimensional expanding foliations for $f$ respectively for $f^{-1}$. If the conditional entropies along these one dimensional foliations coincide with the exponents of $L$, then the center foliation of $f$ is $C^{1+\vep}$ for some $\vep>0$, and $f$ is $C^{1+\vep}$ conjugated to a true skew product over $L$.

\end{main}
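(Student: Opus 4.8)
To prove Theorem~\ref{main.higherbase}, the plan is to transfer the one dimensional rigidity machinery developed for Theorem~\ref{main.higherdimension} from the conjugacy to the center holonomy, the hypothesis on conditional entropies playing the role that the hypothesis on Lyapunov exponents plays there. First I would record the skew product structure of $f$: since the center foliation $\{\cdot\}\times N$ of $f_0$ is smooth it is plaque expansive, so by \cite{HPS70,HPS77} the perturbation $f$ is dynamically coherent and leaf conjugate to $f_0$; hence the center foliation $\cF^c$ of $f$ is a continuous fiber bundle with compact fiber over a quotient $\overline M$, and the induced map $\overline f$ on $\overline M\simeq\TT^d$ is topologically conjugate to $L$. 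Writing $\pi\colon M\to\TT^d$ for the projection along $\cF^c$, this gives a semiconjugacy of $f$ to (a conjugate of) $L$. By hypothesis the strong stable and strong unstable foliations of $f$ carry dominated flags of one dimensional sub-foliations $\cW_1,\dots,\cW_d$, each expanded by $f$ or by $f^{-1}$, and $\pi$ maps every leaf of $\cW_i$ onto a leaf of the corresponding linear foliation $\cW_i^L$ of $\TT^d$ by a local homeomorphism; combined with compactness of the fibers and the computation of the topological entropy of a linear foliation (\cite{Sa,SX09}) this yields $h_{\mathrm{top}}(f,\cW_i)=\log|\lambda_i|$, the exponent of $L$ in the direction of $\cW_i$.

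Next I would show that $\Leb$ is a Gibbs expanding state along each $\cW_i$. The strong stable and strong unstable foliations of a partially hyperbolic diffeomorphism, together with the sub-foliations given by a dominated flag, are absolutely continuous — the bunching needed for the sub-foliations being automatic here because $f$ is $C^1$ close to $f_0$, so the expansion rates along the $\cW_i$ are nearly constant. Disintegrating $\Leb$ first along $\cF^{uu}$ (resp.\ $\cF^{ss}$) and then, inside each plaque, along $\cW_i$, a Fubini argument gives absolutely continuous conditionals along every $\cW_i$; invariance then forces these conditionals to be the $\Jac$-defined densities, i.e.\ $\Leb$ is a Gibbs expanding state along each $\cW_i$. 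By the Pesin formula along the foliation, $h_{\Leb}(f,\cW_i)=\int\log\Jac(Df|_{E_i})\,d\Leb$ (with $E_i$ tangent to $\cW_i$), and the hypothesis says this common value equals $\log|\lambda_i|=h_{\mathrm{top}}(f,\cW_i)$. The heart of the argument is then to turn this into a cohomological equation: along the (topologically linear, expanding) one dimensional dynamics of $\cW_i$, the potential $-\log\Jac(Df|_{E_i})$ has zero pressure and $\Leb$ as an equilibrium state (this is what being a Gibbs expanding state means), while at the same time $\Leb$ is a measure of maximal conditional entropy along $\cW_i$, i.e.\ an equilibrium state for the zero potential, whose pressure is $h_{\mathrm{top}}(f,\cW_i)=\log|\lambda_i|$. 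Since two H\"older potentials sharing an equilibrium state differ by a constant and a coboundary (H\"older, after a Liv\v sic argument), one gets
\[
\log\Jac(Df|_{E_i})=\log|\lambda_i|+\varphi_i\circ f-\varphi_i
\]
for a H\"older function $\varphi_i$, for every $i=1,\dots,d$ (using $f^{-1}$ for the stable $\cW_i$).

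These $d$ cohomological equations are exactly the input of the de la Llave--Marco--Moriy\'on regularity scheme in the simple real spectrum setting (\cite{L87,L92,L97,G}): solving them along the leaves and bootstrapping shows that the holonomy of $\cW_i$ — equivalently, the restriction of $\pi$ to each $\cW_i$-leaf — is $C^{1+\vep}$. Since $\pi$ is constant along $\cF^c$ and the leaves of $\cW_1,\dots,\cW_d$ span a complement of $E^c$ at every point, repeated application of Journ\'e's lemma upgrades this to $\pi\in C^{1+\vep}$; hence $\cF^c$ is a $C^{1+\vep}$ foliation. Straightening $\cF^c$ in a $C^{1+\vep}$ chart turns $f$ into a genuine skew product over $\overline f$, and $\overline f$ — being conjugate to $L$ and inheriting, via $\pi$, the same cohomological equations downstairs — is $C^{1+\vep}$ conjugate to $L$ by the same scheme (equivalently, by Theorem~\ref{main.higherdimension} applied to $\overline f$, which preserves the pushed-down volume and has the exponents of $L$). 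Composing the two conjugacies gives that $f$ is $C^{1+\vep}$ conjugate to a true skew product over $L$.

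For the final statement, if $N=S^1$ and $f$ is $C^\infty$, the bootstrap above produces $C^\infty$ in place of $C^{1+\vep}$ (\cite{L97,G}); moreover, since $f$ and $L$ are volume preserving and the $\cW_i$-exponents of $f$ equal those of $L$, the single center exponent vanishes, so the $C^\infty$ circle cocycle over $L$ obtained above has zero fibrewise exponent and, by the invariance principle (\cite{AVW1,AVW2}), is $C^\infty$ conjugate to a cocycle of rotations $y\mapsto y+\alpha(x)$. The step I expect to be the main obstacle is the cohomological equation in the second paragraph: making precise and establishing the thermodynamic formalism along a one dimensional expanding foliation — a variational principle for the conditional entropy, existence and uniqueness of equilibrium states, and the implication that coincidence of equilibrium states forces cohomology — in a setting where the ``dynamics along the foliation'' is a foliated object rather than a single expanding map. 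The near-linearity coming from $C^1$-closeness to $f_0$ and the semiconjugacy to $L$ are what make this manageable, and presumably the needed statements are precisely those assembled in Section~2.
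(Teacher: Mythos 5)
Your overall strategy (reduce to the one–dimensional sub-foliations, use the entropy hypothesis to force rigidity along each of them, then assemble with Journ\'e) is the right skeleton, but two of your central steps are genuine gaps, and they are exactly the points where the paper's proof does real work. First, you assert that $\Leb$ is automatically a Gibbs expanding state along each one–dimensional foliation $\cW_i$, by claiming that the sub-foliations of the dominated flag inside $\cF^{u,f}$ are absolutely continuous and running a Fubini argument. This is unjustified and in general false: only the strong flag foliations enjoy automatic absolute continuity, while the intermediate one–dimensional foliations inside an unstable leaf are typically pathological ("Fubini foiled" happens leafwise as well), and no amount of $C^1$-closeness or bunching rescues this for a $C^2$ perturbation. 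In the paper the Gibbs $e$-state property is a \emph{consequence} of the entropy hypothesis: $h_{\Leb}(f,\cF^{u,f}_i)=\lambda^u_i$ plus the Ruelle inequality along foliations gives $\lambda^u_i\le\lambda^u_i(f,\Leb)$; a strict inequality is excluded by summing over $i$ and comparing with $\lambda^u(L)$ (via the Ma\~n\'e/volume-growth argument, or via Theorem \ref{p.partialentropy} and Proposition \ref{p.base}); and then the converse Pesin formula along expanding foliations (Theorem \ref{l.criterionofgibbs}) yields the Gibbs $e$-state property. If absolute continuity were automatic, part of the hypothesis would be redundant.

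Second, your key mechanism — a foliated thermodynamic formalism in which $\Leb$ is simultaneously an equilibrium state for $-\log\Jac(Df|_{E_i})$ and for the zero potential, so that a Liv\v sic argument produces $\log\Jac(Df|_{E_i})=\log|\lambda_i|+\varphi_i\circ f-\varphi_i$ — does not exist in this setting, as you yourself flag: there is no variational principle with existence/uniqueness of equilibrium states for conditional entropy along a single one–dimensional foliation, and the leafwise dynamics is not a self-map admitting the usual transfer-operator or periodic-orbit machinery (the paper explicitly avoids Liv\v sic-type arguments for this reason). What replaces it in the paper is the partial-entropy invariance principle, Theorem \ref{p.partialentropy}: equality $h_{\Leb}(f,\cF^{u,f}_i)=h_{\nu}(L,\cF^u_i)$ forces $c$-invariance of the conditional measures, and then the one–dimensional measure-smoothness argument of Theorem \ref{main.technique} makes $\pi$ restricted to $\cW_i$-leaves uniformly $C^{1+\vep}$ and the center holonomy smooth along $\cW_i$. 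Relatedly, you assume at the outset that $\pi$ maps every $\cW_i$-leaf onto the corresponding linear leaf of $L$; this is automatic (Lemma \ref{l.perturbationquotient}) only for the weak flag foliations $\cF^{u,f}_{1,j}$, while for the individual intermediate foliations $\cF^{u,f}_i$, $i\ge 2$, it requires the Gogolev-type Lemma \ref{l.gogolevskew}, whose proof needs the $C^{1+\vep}$ smoothness of $\pi$ along the slower foliations already established — so the topological preservation and the regularity must be proved together by induction, not asserted up front. (The final claim about rotations on $S^1$ fibers is not part of this theorem's statement.)
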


\subsection{Ideas of the proofs}

Let us comment on the ideas of the proofs.

The first remark is that the hypothesis of our theorems imply that the stable and unstable foliations of $f$ are subfoliated by one dimensional foliations. By Journ\'{e}'s regularity result \cite{J}, in order to prove that the conjugacy between two Anosov maps is $C^{r}$, it is sufficient to show that the conjugacy restricted to every one dimensional subfoliation is $C^r$.

For the partially hyperbolic case, in order to show that the center foliation is $C^r$, it is sufficient to show that every center leaf is $C^r$, and the center holonomy is (uniformly) $C^r$. Again by a Journ\'e-type regularity result, if we can show the regularity of the holonomy along all the one dimensional stable and unstable subfoliations, this will imply that the center holonomy is $C^r$. This is a classical technique used in various other papers on rigidity.

The second remark is that there is a classical method from measure theory which can be used to show the smoothness of maps between one dimensional (sub)manifolds. If $h$ is a one dimensional homeomorphism between the $C^r$ curves $I_1$ and $I_2$, and there exist probability measures $\mu_i$ on $I_i$, absolutely continuous with respect to the Lebesgue measures on $l_i$, with $C^{r-1}$ densities $\rho_i$ bounded away from zero and infinity, and such that $h_*(\mu_1)=\mu_2$, then $h$ is a $C^r$ diffeomorphism between $l_1$ and $l_2$. This observation has been widely used in a series of works, see for instance \cite{L92,AVW1}.

A third remark is that the one dimensional stable and unstable subfoliations which we have are either uniformly expanding or uniformly contracting (or uniformly expanding for $f^{-1}$). This property allows us to construct invariant measures which are absolutely continuous (with respect to Lebesgue) along the one dimensional subfoliations, which we call Gibbs expanding states, because they generalize the classical notion of Gibbs u-state. The properties of the Gibbs expanding states are similar to the properties of the Gibbs u-states (see for instance \cite{Beyond}[Chapter 11]), in particular we have that the densities (with respect the Lebesgue) of the disintegrations of a Gibbs expanding state along the corresponding expanding foliation are uniformly $C^{r-1}$, and can be extended continuously to the support of the measure.

Finally, our results would follow if we can establish that the conjugacy for the Anosov case, or the central holonomy for the partially hyperbolic case, preserves the one dimensional subfoliations, and preserves the corresponding Gibbs expanding states, which have full support. There are different arguments that we use for different one dimensional subfoliations.

In the Anosov case, we use the main technical result Theorem~\ref{main.technique}, which is presented in the next section, and which may have its own interest. It is a detailed characterization of the situation when two Gibbs expanding states are preserved by a conjugacy, including a characterization in terms of Lyapunov exponents ([B4]). It is build on previous work of Ledrappier, Young, and others, and a crucial ingredient is the Pesin formula relative to an expanding foliation $\cF$: the conditional entropy of the invariant measure $\mu$ along $\cF$ is equal to the Lyapunov exponent of $\mu$ along $\cF$ if and only if $\mu$ is a Gibbs expanding state for $\cF$. We will apply Theorem~\ref{main.technique} in order to obtain Theorem~\ref{main.3d}, Theorem~\ref{main.higherdimension}, Corollary \ref{maincor.ECvolume}, Corollary~\ref{maincor.Katokexample} and Corollary~\ref{maincor.generalconjugation}. It is worth mentioning that our characterization does not depend on the Livshitz theorem and hence could also be applied to partially hyperbolic diffeomorphisms, where Livshitz theorem may not valid, and periodic points may not even exist.

For the case of partially hyperbolic diffeomorphisms we will use a version of the so-called invariance principle, Theorem \ref{p.partialentropy} from Section \ref{s.invarianceprinciple}. In contrast with the standard invariance principle of Avila and Viana \cite{AV}, where in general the vanishing center exponents is needed, this is a different form of invariance principle which uses the concept of conditional entropy along an expanding foliation, and was developed in \cite{TY,VY3}. Theorem \ref{p.partialentropy}, together with the same Pesin formula relative to an expanding foliation, will be used in order to obtain Theorem~\ref{main.cneterfible} and Theorem~\ref{main.higherbase}.

There are some other important steps involved in the proofs, preparing the application of the 2 main technical tools mentioned above. The hypothesis on the Lyapunov exponents and the uniqueness of the measure of maximal entropy for Anosov maps will imply that the conjugacy between an Anosov map and the linear part, or the semiconjugacy between a skew product and the hyperbolic linear part, takes volume to volume. Thus the Gibbs expanding state which we will use in our considerations will be the volume, for all the one dimensional subfoliations. In order to show that the conjugacy or the center holonomy preserves various one dimensional subfoliations, and in order to bootstrap for better regularity, we will use some previous results of Gogolev and others (\cite{G,GG,GKS}).

\subsection{Organization of the paper}

In the next section, we will give various definitions and we will state our main technical result on the Gibbs expanding states, Theorem ~\ref{main.technique}, together with a corollary characterizing smooth conjugacy between some nonlinear Anosov diffeomorphisms (Corollary~\ref{maincor.generalconjugation}, a generalization of Theorem~\ref{main.higherdimension}). In Section~\ref{s.3danosov} and Section~\ref{s.highdAnosov} we will assume Theorem~\ref{main.technique} and we will give the proofs of Theorem~\ref{main.3d}, Theorem~\ref{main.higherdimension}, Corollary \ref{maincor.ECvolume}, Corollary~\ref{maincor.Katokexample} and Corollary~\ref{maincor.generalconjugation}. Theorem~\ref{main.technique} and Corollary~\ref{maincor.periodicpoints} will be proved in Section~\ref{s.technique}. In Section~\ref{s.invarianceprinciple} we introduce the background on the Invariance principle. Finally, the proof of Theorem~\ref{main.cneterfible} is given in Section \ref{s.skew} and the proof of Theorem~\ref{main.higherbase} appear in Section~\ref{higherbase}. We will end the paper with some further remarks on possible extensions of our results.

\textbf{Acknowledgements:} We would like to thank Mauricio Poletti for conversations on this topic, from where the idea of this work originated, and to Andrey Gogolev for useful comments on an early version of the paper. We would also like to thank the anonymous referees for useful comments and suggestions.

\section{Preliminaries}

In this section we will first introduce some definitions and background results, and then we will present our main technical result.

Throughout this section, we assume $M$ to be a $C^\infty$ Riemannian manifold without boundary, and $f\in \Diff^r(M)$ for some $r\geq 1$.

\subsection{Expanding foliations}
\begin{definition}\label{df.expandingfoliation}

We say that a continuous foliation $\cF$ is an \emph{expanding foliation} of $f$ with $C^r$ leaves if

\begin{enumerate}
\item
the foliation is invariant under the iteration of $f$;
\item
the leaves of $\cF$ are uniformly $C^{r}$, and vary continuously in the $C^1$ topology;
\item
and $Df$ restricted to the tangent bundle of each leaf of $\cF$ is uniformly expanding, meaning that for some Riemannian metric, for some $\lambda>0$, for all $x\in M$, for all unit vectors $v\in T_x\cF$, we have
$$
\|Df|_{T_x\cF}(v)\|\geq e^{\lambda}.
$$

\end{enumerate}

\end{definition}

If no confusing is caused, we will say that $\cF$ is just an expanding foliation.

\subsection{Gibbs expanding states}
Let $\cF$ be a foliation, $\mu$ a Borel measure on $M$, and $B$ a foliation chart of $\cF$ with $\mu(B)>0$. Let $D$ be a transverse disk of the foliation chart, then we have a local projection map $\pi: B \to D$ along the $\cF$ leaves, and a corresponding quotient measure $\overline{\mu}=\pi_*(\mu)$ supported on $D$. Then $D$ can be identified as the space of $\cF\mid_B$ leaves and the measure $\overline{\mu}$ is a measure of the set of leaves. By \cite{Rok49}, the \emph{disintegration of $\mu$ along the foliation $\cF$ on $B$} consists of the probability measures $\{\mu^{\cF}_{x}\}$, supported on $\mu$ almost every leaf $\cF(x)$, such that:

\begin{equation}\label{eq.defdisintegration}
d\mu(y)=\int d\mu^{\cF}_x(y)d\mu(x)=\int_{D} d\mu^{\cF}_{x}(y) d\overline{\mu}(x).
\end{equation}

The measures $\mu^{\cF}_{x}$ exist and are unique (up to $\mu$-zero measure), and are called the disintegrations of $\mu$ along the plaques $\cF(x)$.

\begin{definition}\label{df.Gibbsestate}

Suppose $\cF$ is an expanding foliation for the diffeomorphism $f$. An invariant probability $\mu$ of $f$ is a \emph{Gibbs expanding state along the foliation $\cF$} if for any foliation chart of $\cF$, the disintegration of $\mu$ along the plaques of this chart is equivalent to the Lebesgue measure on the plaques for $\mu$ almost every plaque. If there is no confusion on the foliation $\cF$, we will also call the Gibbs expanding state along $\cF$ just \emph{Gibbs $e$-state}, and we will denote the space of Gibbs expanding states along $\cF$ by $\Gibb^e(f,\cF)$.

\end{definition}

The Gibbs expanding state is a generalization of the Gibbs $u$-state, any Gibbs $u$-state is indeed a Gibbs expanding state when $\cF$ is the corresponding unstable foliation. The following properties are well known for Gibbs $u$-states, see for instance \cite{Beyond}[Section 11]. But the proof can be directly generalized to Gibbs expanding states, because it basically depends on the expanding property of the foliation and on the distortion estimates of the tangent map restricted to $\cF$, so we will not provide a proof here.

\begin{proposition}\label{p.Gibbsestate}

Let $\cF$ be an expanding foliation with uniform $C^r$ ($r>1$) leaves, for the $C^r$ diffeomorphism $f$. Then

\begin{itemize}
\item
[A1] $\Gibb^e(f,\cF)$ is a non-empty convex and compact set with respect to the weak-* topology;
\item
[A2] almost every ergodic component of any Gibbs $e$-state is still a Gibbs $e$-state;
\item
[A3] the support of any Gibbs $e$-state is $\cF$ saturated, that is, its support consists of an union of $\cF$ leaves;
\item
[A4] let $\mu\in \Gibb^e(f,\cF)$, and $B$ any foliation chart of $\cF$, then for $\mu\mid_B$ almost every
$x$, the disintegration of $\mu$ along $\cF\mid_B(x)$ equals to
$\rho(z)d\Leb\mid_{\cF\mid_B(x)}(z)$, where $\rho$ is continuous on $B$ and uniformly $C^{r-1}$ along the plaques of $\cF$, and is given by the formulas
\begin{equation}\label{eq.density}
\rho(z)=\frac{\Delta(x,z)}{\int \Delta(x,z)d\Leb\mid_{\cF\mid_B(x)}(z)}  \text{ and }
\end{equation}
$$
\Delta(x,z)=\lim_{n\to \infty}\frac{\det(D_zf^{-n}\mid_{T_z\cF(x)})}{\det(D_xf^{-n}\mid_{T_x\cF(z)})}.
$$
\end{itemize}

\end{proposition}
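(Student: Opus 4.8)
The plan is to follow the classical construction and arguments for Gibbs $u$-states (as in \cite{Beyond}[Section 11]), replacing the unstable foliation by the abstract expanding foliation $\cF$: the only inputs one really uses are the uniform expansion of $Df$ along $\cF$ and a bounded distortion estimate for the leafwise Jacobian, both available here.

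\textbf{Step 1: bounded distortion and the function $\Delta$.} Since $f$ is $C^r$ and the leaves of $\cF$ are uniformly $C^r$ and vary continuously in the $C^1$ topology, the leafwise log-Jacobian $x \mapsto \log\det(D_xf\mid_{T_x\cF})$ is uniformly $C^{r-1}$ along plaques of $\cF$ and continuous on $M$. Because $Df\mid_{T\cF}$ expands by at least $e^{\lambda}$, for $x,z$ in a common local plaque one has $d_{\cF}(f^{-n}x, f^{-n}z)\le C e^{-\lambda n}d_{\cF}(x,z)$, so the series $\sum_{n\ge 1}\bigl[\log\det(D_{f^{-n}x}f\mid_{T\cF}) - \log\det(D_{f^{-n}z}f\mid_{T\cF})\bigr]$ converges uniformly; its exponential is exactly the limit $\Delta(x,z)$ appearing in \eqref{eq.density}. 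Differentiating the series term by term along the plaque — legitimate by the uniform $C^{r-1}$ bounds on each term together with the geometric decay of the $d_\cF$-distances — shows that $\Delta$ is continuous on a foliation chart $B$ and uniformly $C^{r-1}$ along plaques, with $\Delta(x,x)=1$ and the cocycle relation $\Delta(x,y)\Delta(y,z)=\Delta(x,z)$; in particular $0<c\le\Delta\le C<\infty$ on $B$ after shrinking $B$, and $\det(D_xf^n\mid_{T\cF})$ satisfies a uniform distortion bound on plaques. I expect this to be the main obstacle: everything else is soft measure theory, but here one must do the Hölder bookkeeping carefully when $r\notin\mathbb N$ (so that ``$C^{r-1}$'' means $C^{r-1}$ in the Hölder scale) in order to get the claimed leafwise regularity and uniformity of the density.

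\textbf{Step 2: construction of a Gibbs $e$-state (nonemptiness in A1).} Fix a disk $D$ inside a leaf of $\cF$ and let $m_0$ be normalized Riemannian volume on $D$. On each foliation chart it meets, the pushforward $f^n_*m_0$ has density with respect to leafwise Lebesgue equal to the inverse leafwise Jacobian of $f^n$, which by Step 1 is, on each plaque, uniformly bounded above and below and uniformly $C^{r-1}$; the same holds for the Cesàro averages $\mu_n=\tfrac1n\sum_{k=0}^{n-1}f^k_*m_0$. Passing to a weak-$*$ limit $\mu$ and applying Arzel\`a--Ascoli to the uniformly $C^{r-1}$-bounded leafwise densities, one checks that the disintegration of $\mu$ on any foliation chart is absolutely continuous with respect to leafwise Lebesgue, while $\mu$ is $f$-invariant by the usual averaging argument; hence $\Gibb^e(f,\cF)\neq\emptyset$.

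\textbf{Step 3: rigidity of the density (A4) and the rest of A1.} Let $\mu$ be any Gibbs $e$-state and $B$ a foliation chart, and write $\mu=\int (\rho_x\, d\Leb\mid_{\cF\mid_B(x)})\,d\overline\mu(x)$. Invariance of $\mu$ under $f^{-n}$ expresses, for $z,w$ in the plaque through $x$, the ratio $\rho_x(z)/\rho_x(w)$ as the leafwise Jacobian ratio of $f^{-n}$ at $z$ and $w$ times the ratio of the densities of $\mu$ along $\cF(f^{-n}x)$ at $f^{-n}z$ and $f^{-n}w$; since $f^{-n}z$ and $f^{-n}w$ approach each other geometrically and, by Lebesgue differentiation along leaves, the latter ratio tends to $1$ for $\mu$-a.e. plaque and a.e. $z,w$, we obtain that $\rho_x$ is proportional to $\Delta(x,\cdot)$ and hence, after normalization, is given by \eqref{eq.density}. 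In particular $\rho$ extends continuously to $B$ and is uniformly $C^{r-1}$ along plaques by Step 1, which is A4. Consequently, membership in $\Gibb^e(f,\cF)$ is equivalent to being $f$-invariant with leafwise disintegrations of the explicit form \eqref{eq.density}; since this family of densities is weak-$*$ closed, $\Gibb^e(f,\cF)$ is compact, and it is convex because a convex combination of such measures is again $f$-invariant with absolutely continuous leafwise disintegrations, hence again of the form \eqref{eq.density}. This completes A1. For A2, decompose $\mu\in\Gibb^e(f,\cF)$ into ergodic components $\mu=\int\mu_\xi\,d\hat\mu(\xi)$: each $\mu_\xi$ is $f$-invariant, and a Rokhlin disintegration argument as for Gibbs $u$-states (\cite{Beyond}[Section 11], \cite{Rok49}) — using that the ergodic component of $\mu$-a.e. $x$ is a limit of Birkhoff averages which, by the Gibbs property of $\mu$, may be taken starting from leafwise Lebesgue-typical points — shows that the leafwise disintegrations of $\mu_\xi$ are absolutely continuous for $\hat\mu$-a.e. $\xi$, so $\mu_\xi\in\Gibb^e(f,\cF)$ a.e. For A3, if $x\in\supp(\mu)$ then by A4 the density $\rho_x$ is continuous and strictly positive on the plaque through $x$, so $\mu$ charges every relatively open subset of that plaque; combined with $x\in\supp(\mu)$ this forces the whole local plaque into $\supp(\mu)$, and chaining plaques along a (path-connected) leaf propagates this to the entire leaf, so $\supp(\mu)$ is a union of $\cF$-leaves.
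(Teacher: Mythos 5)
The paper does not actually prove this proposition: it only remarks that these are the classical properties of Gibbs $u$-states, cites \cite{Beyond}[Section 11], and asserts that the proofs generalize verbatim because they use nothing beyond uniform expansion along $\cF$ and leafwise distortion estimates. Your proposal is precisely that generalization (distortion function $\Delta$, Gibbs states as limits of averaged pushforwards of leafwise Lebesgue, rigidity of the conditional densities), so in approach you are doing exactly what the paper intends, and Steps 1--2 together with your A4/A1 arguments are the standard ones; the points you flag (H\"older bookkeeping for $r\notin\mathbb N$, passing absolute continuity of disintegrations to weak-$*$ limits, the density-point argument behind the identification $\rho\propto\Delta$) are indeed where the classical proof spends its effort, and at sketch level they are acceptable. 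In A2 it would be worth making explicit the one fact special to expanding foliations that the argument hinges on: backward iterates contract leafwise distances, so backward Birkhoff averages are constant along plaques, and for $\mu$-a.e.\ point the backward averages determine the same ergodic component as the forward ones; this is what forces the conditional measures of a.e.\ ergodic component to remain absolutely continuous.

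The one genuine slip is in your A3. From $x\in\supp(\mu)$ you cannot conclude that ``$\mu$ charges every relatively open subset of the plaque through $x$'': when the transverse quotient measure is non-atomic, every individual plaque is $\mu$-null, and the plaque through a given point of the support need not even be among the $\mu$-generic plaques on which the disintegration is defined. The repair is the standard one: by A4, for quotient-almost every charged plaque $P$ the conditional density is continuous and strictly positive on all of $P$, hence $\supp(\mu)\supseteq P$ for all such plaques; since every neighborhood of $x$ has positive $\mu$-measure, there are such plaques passing arbitrarily close to $x$, and since plaques vary continuously and $\supp(\mu)$ is closed, they accumulate on the entire plaque through $x$, which is therefore contained in $\supp(\mu)$. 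After that, chaining plaques along the (path-connected) leaf propagates the inclusion to the whole leaf, as you say. With this correction the argument for A3, and hence the whole proposal, matches the intended proof.
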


The property (A1) above was used in Step 2 of \cite{GG}[Section 4.3] and in \cite{G08} in order to study the center foliation which is uniformly expanding.

\subsection{Conditional entropy along an expanding foliation}
One can define the conditional entropy along an expanding foliations in the following way. The next definitions work also for $C^1$ diffeomorphisms and foliations.

\begin{definition}

Let $\cF$ be an expanding foliation of a diffeomorphism $f$, and $\mu$ an invariant probability measure. A measurable partition $\cA$ is called \emph{subordinate} to $(\cF,f,\mu)$ if:

\begin{enumerate}
\item
for $\mu$ almost every point $x\in M$, $\cA(x)\subset \cF(x)$ and $\cA(x)$ contains a neighborhood of $x$,
\item
$f(\cA)\preceq \cA$,
\item
and $\vee_{i=0}^{\infty} f^{-i}(\cA)$ is the partition into points.
\end{enumerate}

\end{definition}

The existence of such partitions subordinate to Pesin's unstable lamination was proved by Ledrappier and Strelcyn in \cite{LS}. For a general expanding foliation the existence of such a subordinated partition, with pieces uniformly bounded from above, was shown by the second author in (\cite{Yang16}).

Assume that $\xi$ is a subordinated partition for $(\cF,f,\mu)$, then we may define the \emph{partial or conditional entropy along the expanding foliation $\cF$}:
$$
h_\mu(f,\cF)=H_\mu(f^{-1}(\xi) \mid \xi)=\int_M-\log\mu_x^{\cF}(f^{-1}\xi(x))d\mu(x).
$$
The partial entropy is finite, and the definition does not depend on the choice of the subordinated partition, for a proof see \cite{LY1}[Lemma 3.1.2]. It is affine and upper semicontinuous (as a function of the measure). For more on conditional entropy see \cite{LS}, \cite{LY1}, \cite{Yang16}, also \cite{HHW} for alternative definitions and various properties. 

The conditional entropy along an expanding foliation satisfies various good properties, which we know from the classical ergodic theory, let us list some of them:
\begin{itemize}
\item {\bf The variation principle relative to a foliation} (see \cite{HHW}).\\
One can define the {\it topological entropy $h(f,\cF)$ along an expanding foliation} $\cF$ as the supremum of the Bowen entropy for disks inside leaves of the foliation. This turns out to be equal to the so-called {\it volume growth} of the foliation introduced by Yomdin and Newhouse, i.e. the maximal exponential rate of growth of the volume of disks inside the leaves of the foliation. Then for a $C^1$ diffeomorphism $f$ we have
\begin{eqnarray*}
h(f,\cF)&=&\sup\{ h_{\mu}(f,\cF):\ \mu \hbox{ invariant probability for }f\}\\
&=&\sup\{ h_{\mu}(f,\cF):\ \mu \hbox{ ergodic invariant probability for }f\}
\end{eqnarray*}
\item {\bf The Shannon-McMillan-Breiman Theorem} (see \cite{HHW}, \cite{LY2}).\\
Let $B_n(x,f,\cF,\delta)=\cap_{i=0}^nf^{-i}(B_{\cF}(x,\delta))$ be the dynamical balls of $f$ restricted to the foliation $\cF$. Assume that $f$ is $C^1$ and $\mu$ is ergodic, then for $\mu$-almost every $x\in M$ we have
$$
h_{\mu}(f,\cF)=\lim_{n\to\infty}\mu_x^{\cF}(f^{-n}\xi(x))=\lim_{n\to\infty}\mu_x^{\cF}(B_n(x,f,\cF,\delta)).
$$
\item {\bf The Ruelle inequality.}\\
Several Lyapunov exponents of $f$ with respect to $\mu$ will correspond to the tangent bundle of $\cF$; let us denote by $\lambda^{\cF}(x)$ their sum, and we call it the {\it Lyapunov exponent of $\mu$ along $\cF$}. By Birkhoff Ergodic Theorem we have
\begin{equation}\label{eq.exponentfoliation}
\lambda^{\cF}(f,\mu)=\int_M\lambda^{\cF}(x)d\mu(x)=\int_M\log(\det(Df\mid_{T_x\cF}))d\mu(x).
\end{equation}
If $f$ is $C^1$ then we have
\begin{equation}\label{eq.Ruelle}
h_{\mu}(f,\cF)\leq\lambda^{\cF}(f,\mu).
\end{equation}
\item {\bf The Pesin formula and its converse.} (see \cite{L},\cite{LY1})\\
Assume that $f$ is $C^r$ and $\cF$ has uniformly $C^r$ leaves for some $r>1$. Then $\mu$ is a Gibbs expanding state for $\cF$ if and only if the following equality ({\it Pesin Formula}) holds:
$$
h_{\mu}(f,\cF)=\lambda^{\cF}(f,\mu).
$$ 
\end{itemize}

\subsection{Ruelle inequality and Pesin formula}
Our proofs rely heavily on the last two results, the Ruelle inequality and the Pesin formula. The proof of the Pesin formula was given by Ledrappier~\cite{L} (and an explanation of the proof can be also found in \cite{LY1}[Lemma 6.13]) in a slightly different context, for the unstable lamination, however the proof can be translated directly to our context. Since we are not aware of a reference in the context of expanding foliations which are not the unstable foliation, we will give a sketch of this proof following closely the work of Ledrappier, together with a proof of the Ruelle inequality for the $C^1$ case based on a slight modification of the same argument.

\begin{theorem}[Ruelle inequality and Pesin formula]\label{l.criterionofgibbs}

Let $\cF$ be an expanding foliation with $C^r$ leaves of a $C^r$ diffeomorphism $f$, $r\geq1$ and $\mu$ an invariant measure for $f$. Let $\lambda^{\cF}(f,\mu)$ be the Lyapunov exponent of $\mu$ along the tangent bundle of $\cF$. Then
$$
h_\mu(f,\cF)\leq\lambda^{\cF}(f,\mu).
$$
Furthermore, if $r>1$, then $\mu$ is a Gibbs $e$-state of $f$ if and only if
$$
h_\mu(f,\cF)=\lambda^{\cF}(f,\mu).
$$

\end{theorem}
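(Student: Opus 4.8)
The plan is to transplant Ledrappier's proof of the Pesin entropy formula for unstable manifolds to the uniformly expanding foliation $\cF$, using that $\cF$ has uniformly $C^r$ leaves and, by \cite{Yang16}, a partition $\xi$ subordinate to $(\cF,f,\mu)$ with uniformly bounded plaques. Fix such a $\xi$; then $h_\mu(f,\cF)=H_\mu(f^{-1}\xi\mid\xi)=\int_M-\log\mu_x^{\cF}(f^{-1}\xi(x))\,d\mu(x)$, and since $\xi$ is also subordinate for each $f^n$ one has $n\,h_\mu(f,\cF)=H_\mu(f^{-n}\xi\mid\xi)$. Because both $h_\mu(f,\cF)$ and $\lambda^{\cF}(f,\mu)$ are affine in $\mu$, I may assume $\mu$ ergodic.

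The engine of the proof is a plaque-by-plaque comparison of the conditional measures $\mu_x^{\cF}$ with the normalized leaf-volume $m_x:=\Leb_{\cF(x)}|_{\xi(x)}/\Leb_{\cF(x)}(\xi(x))$. On the plaque $\xi(x)$, applied to the partition induced there by $f^{-n}\xi$ (which refines to the partition into points as $n\to\infty$), Jensen's inequality for $\log$ gives, for $\mu$-a.e.\ $x$,
$$
H_{\mu_x^{\cF}}(f^{-n}\xi)+\int_{\xi(x)}\log\Leb_{\cF(x)}(f^{-n}\xi(y))\,d\mu_x^{\cF}(y)=\log\Leb_{\cF(x)}(\xi(x))-D_n(x),
$$
where $D_n(x)\geq 0$ is the Kullback--Leibler divergence of $\mu_x^{\cF}$ from $m_x$ measured at the level of $f^{-n}\xi$; it is nondecreasing in $n$ and $D_n(x)\uparrow D_\infty(x)$, the total divergence, which is finite precisely when $\mu_x^{\cF}\ll\Leb_{\cF(x)}$ on the plaque and is $+\infty$ otherwise. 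Now integrate against $\mu$. The first term integrates to $n\,h_\mu(f,\cF)$; for the second, the decisive input is a bounded-distortion estimate \emph{uniform in $n$} --- available when $r>1$, because the $C^{1+}$-regularity of $Df$ along the $C^r$ leaves, together with the uniform expansion and the uniformly bounded plaque geometry, controls the distortion of $f^n$ restricted to any $\cF$-plaque by a constant independent of $n$ (a geometric series in the $\cF$-contraction rate of $f^{-1}$). With this, $\int_M\log\Leb_{\cF(x)}(f^{-n}\xi(x))\,d\mu=-n\,\lambda^{\cF}(f,\mu)+O(1)$ by Birkhoff, and the $\log$-volume-of-plaque terms cancel between the two sides of the integrated identity, leaving
$$
\int_M D_n(x)\,d\mu(x)=n\bigl(\lambda^{\cF}(f,\mu)-h_\mu(f,\cF)\bigr)+O(1).
$$

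Since $D_n\geq 0$, this already gives the Ruelle inequality $h_\mu(f,\cF)\leq\lambda^{\cF}(f,\mu)$ in the case $r>1$; and equality holds if and only if $\int D_n\,d\mu$ stays bounded, if and only if (monotone convergence) $D_\infty<\infty$ for $\mu$-a.e.\ $x$, if and only if $\mu_x^{\cF}\ll\Leb_{\cF(x)}$ on the plaques, i.e.\ $\mu$ is a Gibbs $e$-state --- the remaining point, that a Gibbs $e$-state indeed has $D_\infty\in L^1(\mu)$, and the $C^{r-1}$ regularity of the resulting densities, both following from Proposition~\ref{p.Gibbsestate}(A4) and the density formula involving $\Delta(x,z)$. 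It then remains to establish the Ruelle inequality for $r=1$, where the distortion estimate is unavailable: here I would bound $H_\mu(f^{-n}\xi\mid\xi)\leq\int_M\log N_n(x)\,d\mu$, with $N_n(x)$ the number of atoms of $f^{-n}\xi$ meeting $\xi(x)$, and estimate $N_n$ by Ruelle's covering argument localized to the leaves of $\cF$ --- using a fine auxiliary partition on whose pieces $Df^n$ is nearly constant (as in Ma\~n\'e's proof of the classical inequality) and the fact that every singular value of $Df|_{T\cF}$ exceeds $1$, so that no positive-part truncation enters and $\log N_n(x)\leq\log\det(D_xf^n|_{T\cF})+C$ with $C$ depending only on $\dim\cF$; dividing by $n$ and letting $n\to\infty$ absorbs $C$.

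I expect the main obstacle to be organizational rather than conceptual: setting up the disintegration machinery cleanly on a foliation whose leaves may be non-compact or dense, so that the plaque-wise identities integrate to the global ones without measurability or integrability problems (in particular keeping $\log\Leb_{\cF(x)}(\xi(x))\in L^1(\mu)$), and pinning down the genuine $n$-independence of the distortion constant. The uniform expansion and bounded geometry of $\cF$ only simplify the estimates compared with Ledrappier's original Pesin-block setting, so I do not anticipate serious new difficulties there.
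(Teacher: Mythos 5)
For $r>1$ your scheme is essentially correct, and it genuinely differs in implementation from the paper's version of the same Ledrappier-type strategy. The paper also fixes a subordinate partition with uniformly bounded plaques, but instead of comparing $\mu^{\cF}_x$ with normalized leaf volume through a Kullback--Leibler divergence at level $f^{-n}\xi$ (which is what forces you to invoke an $n$-uniform distortion estimate), it compares $\mu_x$ with the reference measures $\nu^x_n$ whose densities $\rho^x_n=\Delta_n(x,\cdot)/L_n(x)$ involve only the first $n$ Jacobians; Jensen's inequality on $f^{-1}\xi$ restricted to a plaque, together with Lemma \ref{l.refine}, then gives the inequality with no distortion limit at all, and for $r>1$ equality forces $\mu=\nu$ on the $\sigma$-algebras generated by $f^{-n}\xi$ for every $n$, hence $\mu\in\Gibb^e(f,\cF)$. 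Your route buys the clean identity $\int D_n\,d\mu=n\bigl(\lambda^{\cF}(f,\mu)-h_\mu(f,\cF)\bigr)+O(1)$, at the price of the integrability of $\log\Leb(\xi(\cdot))$ (which you flag) and of two points you do not: finiteness of the divergence is strictly stronger than absolute continuity (harmless here, since in the converse direction Proposition \ref{p.Gibbsestate}(A4) gives uniformly bounded densities, so $D_\infty$ is even uniformly bounded), and Definition \ref{df.Gibbsestate} asks for \emph{equivalence}, not merely $\mu^{\cF}_x\ll\Leb$, so you still need the standard upgrade that absolutely continuous invariant conditionals carry the everywhere positive density $\Delta(x,\cdot)$.

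The genuine gap is the $r=1$ case of the Ruelle inequality. The bound $H_\mu(f^{-n}\xi\mid\xi)\le\int\log N_n\,d\mu$, with $N_n(x)$ the number of atoms of $f^{-n}\xi$ meeting $\xi(x)$, does not work as stated: the atoms of a subordinate partition have no uniform lower bound on their leafwise size, so $N_n(x)$ need not even be finite, and when it is finite it cannot be bounded by $\det(D_xf^n\mid_{T\cF})$ up to a dimensional constant; the Ma\~n\'e-type count is valid only for an auxiliary partition with uniform leafwise geometry, which is not subordinate, and you supply no argument relating entropy along such a partition to $h_\mu(f,\cF)$, which is defined through subordinate partitions. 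Note also that the counting detour is unnecessary within your own scheme: for the inequality you only need the error $E_n$ in $\log\Leb(f^{-n}\xi(y))=\log\Leb(\xi(f^ny))-\log\det(D_yf^n\mid_{T\cF})+E_n(y)$ to be $o(n)$ uniformly, and for $C^1$ maps this follows from the uniform continuity of $x\mapsto\det(Df\mid_{T_x\cF})$ together with the uniform backward contraction of plaques (the sum over $i\le n$ of the oscillations at scales $e^{-i\lambda}$ is $o(n)$); alternatively, replace $m_x$ by the paper's finite-level measures $\nu^x_n$, which is precisely the device that makes the argument work for all $r\ge1$ and simultaneously sets up the equality case.
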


\begin{proof}

Let $\xi$ be a measurable partition subordinate to $(\cF,f,\mu)$. In order to simplify the proof we assume that the elements of $\xi$ are uniformly bounded from above (for such a construction see \cite{Yang16}). Then by definition,
$$h_\mu(f,\cF)=H_\mu(f^{-1}(\xi)\mid \xi).$$

Let $J^{\cF}(x)=\det(Df\mid_{T_x\cF})$. For $\mu$ almost every $x$, denote by $\Leb_x^{\cF}$ the restriction of Lebesgue measure on the element $\xi(x)$, and we define a density function on $\xi(x)$: $\rho_n^x(y)=\frac{\Delta_n(x,y)}{L_n(x)}$ where 
$$\Delta_n(x,y)=\frac{\prod_{i=1}^n{J^{\cF}(f^{-i}(x))}}{\prod_{i=1}^n J^{\cF}(f^{-i}(y))}\in(0,\infty)$$
and $L_n(x)=\int_{\xi(x)} \Delta_n(x,y)d\Leb^{\cF}_x(y)\in(0,\infty)$.

If $f$ and the leaves of $\cF$ are uniformly $C^{1+\alpha}$, one can obtain $\Delta,L$ and $\rho$ as limits when $n$ goes to infinity of $\Delta_n,L_n$ and $\rho_n$:
$$
\Delta(x,y)=\lim_{n\rightarrow\infty}\Delta_n(x,y),\ L(x)=\lim_{n\rightarrow\infty}L_n(x),\ \rho(y)=\lim_{n\rightarrow\infty}\rho^x_n(y).
$$

Since the diameter of $\xi(x)$ is uniformly bounded from above, we get the following lemma:
\begin{lemma}\label{boundeddensity}
For any natural number $n$, the function $(x,y)\to \Delta_n(x,y)$ is uniformly continuous on $M\times M$, and $(x,y)\to \Delta_n(x,y)$ is bounded away from $0$ and $\infty$ for $y\in\xi(x)$. The function $x\to L_n(x)$ is measurable and bounded away from $\infty$.

If $f$ and the leaves of $\cF$ are uniformly $C^{1+\alpha}$, then the bounds are also uniform with respect to $n$, so they hold also for $\Delta$ and $L$, meaning that $\Delta$ is bounded away from $0$ and $\infty$, while $L$ is strictly positive and bounded away from $\infty$.
\end{lemma}

We define the family $\{\nu_n^x\}$ of probability measures on the elements of the partition $\xi$, absolutely continuous with respect to Lebesgue on the leaves of $\cF$ and with density $\rho_n$:
$$
d\nu_n^x=\rho_n^x d\Leb^{\cF}_x.
$$
We have

\begin{lemma}\label{l.refine}
$$\lim_{n\to\infty}\int - \log \nu_n^x(f^{-1}\xi(x))d\mu(x)=\int \log J^{\cF} d\mu.$$
\end{lemma}

\begin{proof}
Define $q_n(x)=\nu_n^x((f^{-1}\xi)(x))$, then
\begin{eqnarray*}
q_n(x)&=&\frac 1{L_n(x)}\int_{(f^{-1}\xi)(x)}\Delta_n(x,y)d\Leb^{\cF}_x(y)\\
&=&\frac 1{L_n(x)}\int_{(f^{-1}\xi)(x)}\frac{\prod_{i=1}^n{J^{\cF}(f^{-i}(x))}}{\prod_{i=1}^n J^{\cF}(f^{-i}(y))}d\Leb^{\cF}_x(y)\\
&=&\frac 1{L_n(x)J^{\cF}(x)}\int_{(f^{-1}\xi)(x)}\Delta_n(f(x),f(y))J^{\cF}(y)\frac{J^{\cF}(f^{-n}(x))}{J^{\cF}(f^{-n}(y))}d\Leb^{\cF}_x(y)\\
&=&\frac 1{L_n(x)J^{\cF}(x)}\int_{\xi(f(x))}\Delta_n(f(x),y')\frac{J^{\cF}(f^{-n}(x))}{J^{\cF}(f^{-n-1}(y'))}d\Leb^{\cF}_{f(x)}(y')\\
&=&\frac{L_n(f(x))}{L_n(x)}\frac{1}{J^\cF(x)}A_n(x),
\end{eqnarray*}
for some measurable function $A_n:M\to\mathbb R_+$ which converges uniformly to 1 when $n$ goes to infinity. This is because the quotient $\frac{J^{\cF}(f^{-n}(x))}{J^{\cF}(f^{-n-1}(y'))}$ converges uniformly to 1, since $y'$ and $f(x)$ are in the same unstable piece, and we assumed that the unstable pieces are bounded.

Recall that $L_n$ is a positive finite-valued measurable function. The above equation shows also that $\frac{L_n(f(x))}{L_n(x)}$ is bounded from above, since $q_n\leq 1$:
$$\frac{L_n(f(x))}{L_n(x)} =  q_n(x)\cdot\frac {J^{\cF}(x)}{A_n(x)} <C<\infty.$$
The bound is uniform on $n$, and if $r>1$, it also holds for $L$.

In particular we have that $\log^+\left(\frac{L_n(f(x))}{L_n(x)}\right)$ is integrable, and well-known ergodic results (for example Proposition 2.2 in \cite{LS}) show that $\log\left(\frac{L_n(f(x))}{L_n(x)}\right)$ must be also integrable and
$$\int_M\log\left(\frac{L_n(f(x))}{L_n(x)}\right)d\mu=0.$$

It follows that $\log q_n$ is integrable and
$$
\int_M \log q_nd\mu=-\int_M \log J^\cF d\mu+\int_M\log A_nd\mu.
$$
Taking the limit when $n$ goes to infinity gives the desired equality.
\end{proof}

For $\mu$ almost every $x$, $(f^{-1}\xi)\mid \xi(x)$ is a countable partition. On each piece $\xi(x)$ we define a step function $\frac{d\nu_n}{d\mu}\mid_{f^{-1}\xi}$ in the following way. For $y\in \xi(x)$, define
$$\frac{d\nu_n}{d\mu}\mid_{f^{-1}\xi}(y)=\frac{\nu_n^x((f^{-1}\xi)(y))}{\mu_x((f^{-1}\xi)(y))}.$$
Since $\nu_n^x$ is a probability, it is easy to see from the construction that 
$$
\int_{\xi(x)}\frac{d\nu_n}{d\mu}\mid_{f^{-1}\xi}d\mu_x=1.
$$
In this way we obtain the real function $\frac{d\nu_n}{d\mu}\mid_{f^{-1}\xi}:M\to\mathbb R_+$ which is well defined almost everywhere and measurable, constant on the pieces of $f^{-1}\xi$, and
$$
\int_M\frac{d\nu_n}{d\mu}\mid_{f^{-1}\xi}d\mu=1.
$$
Applying Jensen inequality in the convex setting we obtain
$$\int_M \log \left(\frac{d\nu_n}{d\mu}\mid_{f^{-1}\xi}\right) d\mu \leq \log \int_M \left(\frac{d\nu_n}{d\mu}\mid_{f^{-1}\xi}\right)d\mu=0,$$
with $\int_M \log \left(\frac{d\nu_n}{d\mu}\mid_{f^{-1}\xi}\right) d\mu=0$ if and only if $\frac{d\nu_n}{d\mu}\mid_{f^{-1}\xi}=1$
almost everywhere.

This means that
\begin{eqnarray*}
h_{\mu}(f,\cF)&=&-\int_M \log \mu_x((f^{-1}\xi)(x))d\mu\\
&\leq& -\int_M \log \nu_n^x((f^{-1}\xi)(x)) d\mu(x)
\end{eqnarray*}

Taking the limit when $n$ goes to infinity and applying Lemma \ref{l.refine} we obtain the desired inequality.

If furthermore $f$ and the leaves of $\cF$ are uniformly $C^{1+\alpha}$, we have that the equality above ($\nu_n$ replaced by $\nu)$ holds if and only if $\mu=\nu$ on $\mathcal{B}_{f^{-1}(\xi)}$, the $\sigma$-algebra generated by $f^{-1}(\xi)$. Similarly, we can show that $\mu=\nu$ on $\mathcal{B}_{f^{-n}(\xi)}$ for any $n>0$, and since $\vee_{i=0}^{\infty} f^{-i}(\cA)$ is the partition into points, we get that $\mu=\nu$ and thus is a Gibbs expanding state.
\end{proof}

\subsection{Gibbs e-states and conjugacy}

Our next result, which will be proved in Section \ref{s.technique}, is the following characterization of the fact that two Gibbs $e$-states are preserved by a conjugacy. 

\begin{main}\label{main.technique}

Let $f,g\in \Diff^k(M)$ ($k\geq 2$), $\cF^i$ be expanding foliations of $i\in\{f,g\}$ with uniform $C^r$ ($r>1$) leaves, and $\mu\in \Gibb^e(f,\cF^f)$. Suppose $f$ and $g$ are conjugated by a homeomorphism $h$, and $h$ maps the foliation $\cF^f$ to the foliation $\cF^g$. Then the following conditions are equivalent:

\begin{itemize}
\item
[B1] $\nu:=h_*(\mu)$ is a Gibbs $e$-state of $g$ for the foliation $\cF^g$;
\item
[B2] $h|_{\cF^f}$ is absolutely continuous on the support of $\mu$ (with respect to Lebesgue on $\cF^f$ and $\cF^g$), with the Jacobian continuous on $\supp(\mu)$, and bounded away from zero and infinity;
\item
[B3] there exists $K>0$, such that for any $x\in \supp(\mu)$ and any integer $n>0$,
$$
\frac{1}{K}<\frac{\det(Df^n\mid_{T_x\cF^f(x)})}{\det(Dg^n\mid_{T_{h(x)}\cF^g(h(x))})}<K
$$
\item
[B4] the sum of the Lyapunov exponents along the expanding foliations are the same: if $\lambda^{\cF^f}(f,\mu)$ is Lyapunov exponent of $\mu$ for $f$ along the foliation $\cF^f$, and $\lambda^{\cF^g}(g,\nu)$ is the Lyapunov exponent of $\nu$ for $g$ along the foliation $\cF^g$ (see \eqref{eq.exponentfoliation}), then
$$
\int \log(\det(Df\mid_{T\cF^f}))d\mu=\lambda^{\cF^f}(f,\mu)=\lambda^{\cF^g}(g,\nu)=\int \log(\det(Dg\mid_{T\cF^g}))d\nu.
$$

\end{itemize}

If the foliations $\cF^f$, $\cF^g$ are 1 dimensional, then the above conditions are also equivalent to

\begin{itemize}
\item[B5$^\prime$] $h$ restricted on each $\cF^f$ leaf within the support of $\mu$ is uniformly $C^{r}$ smooth.
\end{itemize}

\end{main}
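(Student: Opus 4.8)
The plan is to establish the cycle of implications $[B1]\Rightarrow[B2]\Rightarrow[B3]\Rightarrow[B4]\Rightarrow[B1]$, and then separately $[B2]\Leftrightarrow[B5']$ in the one-dimensional case. The backbone of the argument is the Pesin formula relative to an expanding foliation (Theorem~\ref{l.criterionofgibbs}): $\mu$ is a Gibbs $e$-state iff $h_\mu(f,\cF^f)=\lambda^{\cF^f}(f,\mu)$, and likewise for $\nu$ and $g$. Since $h$ is a topological conjugacy carrying $\cF^f$ to $\cF^g$, it sends a partition subordinate to $(\cF^f,f,\mu)$ to one subordinate to $(\cF^g,g,\nu)$, so the \emph{conditional entropies are automatically equal}: $h_\mu(f,\cF^f)=h_\nu(g,\cF^g)$. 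This single observation makes $[B1]\Leftrightarrow[B4]$ almost immediate: by Pesin's formula, $\nu$ is a Gibbs $e$-state iff $h_\nu(g,\cF^g)=\lambda^{\cF^g}(g,\nu)$, and since $\mu$ is already a Gibbs $e$-state we have $h_\mu(f,\cF^f)=\lambda^{\cF^f}(f,\mu)$; combining with the entropy equality gives $[B1]\Leftrightarrow \lambda^{\cF^f}(f,\mu)=\lambda^{\cF^g}(g,\nu)$, which is $[B4]$ after rewriting the exponents as the integrals in \eqref{eq.exponentfoliation}.

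For $[B1]\Rightarrow[B2]$: assuming $\nu=h_*\mu$ is a Gibbs $e$-state, apply property (A4) of Proposition~\ref{p.Gibbsestate} to both $\mu$ and $\nu$. On a foliation chart, the disintegration of $\mu$ along $\cF^f$-plaques has density $\rho_f(z)=\Delta_f(x,z)/\int\Delta_f\,d\Leb$ with $\Delta_f$ continuous and bounded away from $0$ and $\infty$ on $\supp(\mu)$, and similarly for $\nu$ with $\Delta_g$ continuous and bounded on $\supp(\nu)=h(\supp\mu)$. Since $h$ maps $\cF^f$-leaves to $\cF^g$-leaves and $h_*\mu=\nu$, it pushes the disintegration $\mu^{\cF^f}_x$ to $\nu^{\cF^g}_{h(x)}$ for $\mu$-a.e. $x$; writing this in coordinates, the Jacobian of $h|_{\cF^f}$ equals (a.e.) the ratio of an absolutely continuous measure with positive continuous density to another such measure, hence $h|_{\cF^f}$ is absolutely continuous with Jacobian $\mathrm{Jac}(h|_{\cF^f})(z)=\rho_f(z)/\rho_g(h(z))$ times a leafwise-constant factor, which extends continuously to $\supp(\mu)$ and is bounded away from $0$ and $\infty$. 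For $[B2]\Rightarrow[B3]$: differentiate the conjugacy relation $h\circ f=g\circ h$ along the leaves. For $x\in\supp(\mu)$, $Dg^n|_{T_{h(x)}\cF^g}\circ Dh|_{T_x\cF^f}=Dh|_{T_{f^n x}\cF^f}\circ Df^n|_{T_x\cF^f}$, so taking determinants, $\frac{\det(Df^n|_{T_x\cF^f})}{\det(Dg^n|_{T_{h(x)}\cF^g})}=\frac{\mathrm{Jac}(h|_{\cF^f})(f^n x)}{\mathrm{Jac}(h|_{\cF^f})(x)}$, which is bounded above and below by $K=(\sup\mathrm{Jac})/(\inf\mathrm{Jac})$ since $f^n x$ stays in $\supp(\mu)$. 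Finally $[B3]\Rightarrow[B4]$: taking $\frac1n\log$ of the bounded ratio in $[B3]$ and letting $n\to\infty$, the Birkhoff averages $\frac1n\log\det(Df^n|_{T\cF^f})$ and $\frac1n\log\det(Dg^n|_{T\cF^g})$ (evaluated at $x$ and $h(x)$) have the same limit $\mu$-a.e.; integrating and using that $h_*\mu=\nu$ gives $\lambda^{\cF^f}(f,\mu)=\lambda^{\cF^g}(g,\nu)$, which is $[B4]$.

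For the one-dimensional addendum, $[B5']\Rightarrow[B2]$ is trivial ($C^r$ implies absolutely continuous with positive continuous Jacobian on a compact leaf-segment). The substantive direction is $[B2]\Rightarrow[B5']$: this is exactly the classical bootstrapping argument recalled in the introduction's ``second remark''. On a one-dimensional $\cF^f$-leaf inside $\supp(\mu)$, the restrictions $\mu^{\cF^f}_x$ and $\nu^{\cF^g}_{h(x)}$ are, by (A4), absolutely continuous with densities that are uniformly $C^{r-1}$ along the plaques and bounded away from $0$ and $\infty$; since $h|_{\cF^f}$ is a homeomorphism between these one-dimensional curves pushing $\mu^{\cF^f}_x$ to $\nu^{\cF^g}_{h(x)}$, the standard measure-theoretic lemma (used in \cite{L92,AVW1}) gives that $h|_{\cF^f}$ is a $C^r$ diffeomorphism, with $\mathrm{Jac}$ given by the ratio of the $C^{r-1}$ densities. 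The main obstacle I expect is a bookkeeping one rather than a conceptual one: carefully checking that the disintegrations transform correctly under the homeomorphism $h$ (which is only continuous transversally), that all the ``bounded away from $0$ and $\infty$'' and ``continuous on $\supp\mu$'' claims survive the limit $n\to\infty$ in (A4), and that the leafwise-constant ambiguities in the densities do not spoil the uniform bounds; the expanding property of the foliations and the distortion estimates behind Proposition~\ref{p.Gibbsestate} are what make all of this work.
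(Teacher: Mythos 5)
Your proposal is correct and follows essentially the same route as the paper: the same cycle B1$\Rightarrow$B2$\Rightarrow$B3$\Rightarrow$B4$\Rightarrow$B1 (uniqueness of disintegrations plus continuity of the densities from (A4) for B1$\Rightarrow$B2, the Jacobian cocycle relation along leaves for B2$\Rightarrow$B3, Birkhoff averaging and change of variables for B3$\Rightarrow$B4, and invariance of the conditional entropy under $h$ combined with the leafwise Pesin formula for B4$\Rightarrow$B1), together with the same one-dimensional bootstrap identifying the leafwise derivative of $h$ with the ratio of the uniformly $C^{r-1}$ densities for B5$^\prime$. The only slips are cosmetic: in B2$\Rightarrow$B3 the ratio should be $J(h\mid_{\cF^f})(x)/J(h\mid_{\cF^f})(f^n(x))$ rather than its reciprocal (harmless for the two-sided bound), and since $h\mid_{\cF^f}$ need not be differentiable when $\dim\cF^f>1$ the conjugacy relation should be phrased with Jacobians (Radon--Nikodym derivatives) rather than ``$Dh$'', exactly as the paper does.
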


We also have the following Livshitz-type result, involving the periodic data.

\begin{maincor}\label{maincor.periodicpoints}

With the same hypothesis as Theorem~\ref{main.technique}, suppose $p\in \supp(\mu)\subset M$ is a periodic point of $f$ with
period $\pi(p)$, then any of the conditions B1-B4 implies that
$$
\det(Df^{\pi(p)}\mid_{T_p\cF^f(p)})=\det(Dg^{\pi(p)}\mid_{T_{h(p)}\cF^g(h(p))}).
$$

Conversely, suppose that there exists a sequence of periodic points $p_n$ of $f$ with period $\pi(p_n)$, and for each $n\in\mathbb N$ let $\mu_n$ be the invariant measure of $f$ supported on $Orb_f(p_n)$. If $\mu$ belongs to the convex closure of $\{\mu_n:\ n\in\mathbb N\}$, and
$$
\det(Df^{\pi(p_n)}\mid_{T_{p_n}\cF^f(p_n)})=\det(Dg^{\pi(p_n)}\mid_{T_{h(p_n)}\cF^g(h(p_n))}) \text{ for each }n\in\mathbb N
$$
then $\nu\in \Gibb^e(g,\cF^g)$.

\end{maincor}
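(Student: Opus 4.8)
The plan is to obtain both implications directly from Theorem~\ref{main.technique}, using condition B3 for the forward statement and condition B4 for the converse. For the forward direction, assume one of B1--B4; then all of them hold, so B3 provides a constant $K>0$ with $K^{-1}<\det(Df^{n}\mid_{T_x\cF^f(x)})/\det(Dg^{n}\mid_{T_{h(x)}\cF^g(h(x))})<K$ for every $x\in\supp(\mu)$ and every $n>0$. Given a periodic point $p\in\supp(\mu)$ of period $\pi(p)$, the conjugacy relation $h\circ f=g\circ h$ shows that $h(p)$ is periodic for $g$ with the same period, while $f$-invariance of $\cF^f$ together with $f^{\pi(p)}(p)=p$ shows that $T_p\cF^f(p)$ is $Df^{\pi(p)}(p)$-invariant; hence $\det(Df^{m\pi(p)}\mid_{T_p\cF^f(p)})=a^m$ where $a:=\det(Df^{\pi(p)}\mid_{T_p\cF^f(p)})>0$, and similarly $\det(Dg^{m\pi(p)}\mid_{T_{h(p)}\cF^g(h(p))})=b^m$ with $b:=\det(Dg^{\pi(p)}\mid_{T_{h(p)}\cF^g(h(p))})>0$. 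Feeding $n=m\pi(p)$ into B3 gives $K^{-1}<(a/b)^m<K$ for all $m\in\mathbb{N}$, which forces $a=b$ --- the claimed equality of periodic data.

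For the converse, I would introduce the functional $\Phi(\eta):=\int\log\det(Df\mid_{T\cF^f})\,d\eta-\int\log\det(Dg\mid_{T\cF^g})\,d(h_*\eta)$ on the space of $f$-invariant probabilities. Since $f,g$ are $C^k$ with $k\geq 2$ and the leaves of $\cF^f,\cF^g$ are uniformly $C^r$ and vary continuously in the $C^1$ topology, both integrands are continuous; together with the weak-$*$ continuity of $\eta\mapsto h_*\eta$, this makes $\Phi$ affine and weak-$*$ continuous. For the periodic-orbit measure $\mu_n=\frac{1}{\pi(p_n)}\sum_{j=0}^{\pi(p_n)-1}\delta_{f^{j}(p_n)}$, multiplicativity of the Jacobian along the orbit gives $\int\log\det(Df\mid_{T\cF^f})\,d\mu_n=\frac{1}{\pi(p_n)}\log\det(Df^{\pi(p_n)}\mid_{T_{p_n}\cF^f(p_n)})$, and since $h_*\mu_n$ is the invariant measure on $\Orb_{g}(h(p_n))$ the same computation gives the analogous formula for $g$. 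The periodic-data hypothesis thus reads exactly $\Phi(\mu_n)=0$ for every $n$. As $\Phi$ is affine it vanishes on the convex hull of $\{\mu_n:n\in\mathbb{N}\}$, and being weak-$*$ continuous it vanishes on the convex closure, so $\Phi(\mu)=0$; by \eqref{eq.exponentfoliation} this says $\lambda^{\cF^f}(f,\mu)=\lambda^{\cF^g}(g,\nu)$ with $\nu=h_*\mu$, which is condition B4. Theorem~\ref{main.technique} then yields $\nu\in\Gibb^e(g,\cF^g)$.

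Granted Theorem~\ref{main.technique}, I do not expect a serious obstacle; both parts are essentially bookkeeping. The only points that need a little care are checking that $\log\det(Df\mid_{T\cF^f})$ and $\log\det(Dg\mid_{T\cF^g})$ are genuinely continuous functions on $M$ (so that integration against them is weak-$*$ continuous), and that $h_*$ maps the convex closure of $\{\mu_n\}$ into the convex closure of $\{h_*\mu_n\}$ --- both immediate from the standing hypotheses and the continuity of $h$ and $h^{-1}$.
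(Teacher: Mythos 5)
Your proposal is correct and follows essentially the same route as the paper: the forward direction iterates B3 at the periodic point and lets the power tend to infinity to force equality of the period-Jacobians, and the converse averages the periodic data into the identity $\int\log\det(Df\mid_{T\cF^f})\,d\mu_n=\int\log\det(Dg\mid_{T\cF^g})\,d(h_*\mu_n)$, passes to the convex closure by linearity and weak-$*$ continuity to obtain B4 for $\mu$, and then invokes Theorem~\ref{main.technique}. The functional $\Phi$ is merely a repackaging of the paper's argument, so no further comment is needed.
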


\begin{remark}\label{rk.historyoftechnique}

When $f$ and $g$ are Anosov diffeomorphism, $\cF^f$ and $\cF^g$ are the corresponding unstable foliations, $\mu$ is the volume measure of $f$, and $h$ denotes the conjugation between $f$ and $g$, then property (B1) above means that $h$ is absolutely continuous. When the manifold is two dimensional, the fact that (B1) is equivalent to (B3) was observed in \cite{L92}, and Corollary~\ref{maincor.periodicpoints} was proved in \cite{LMM}.

\end{remark}

Theorem~\ref{main.technique} enables us to generalize Theorem~\ref{main.higherdimension} in order to give a general criterion for the smoothness of the conjugacy between two general Anosov diffeomorphisms (possibly none of them is linear). Let $L$ be an irreducible linear Anosov diffeomorphism over the torus $\TT^d$ with $d$ real eigenvalues of different norm, denote by $\lambda^s_{l}<\lambda^s_{l-1}<\cdots<\lambda^s_1<0<\lambda_1^u<\cdots<\lambda^u_k$ the exponents of $L$ and by $E^s_{l}\oplus E^s_{l-1}\oplus \cdots\oplus E^s_1\oplus E^u_1\oplus \cdots\oplus E^u_k$ the invariant splitting corresponding to the eigenvectors. Suppose that $f$ and $g$ are $C^2$ diffeomorphisms, which are $C^1$ close to $L$, and let $h$ be the conjugacy between $f$ and $g$. Let $E^{s,\sigma}_{l}\oplus E^{s,\sigma}_{l-1}\oplus \cdots\oplus E^{s,\sigma}_1\oplus E^{u,\sigma}_1\oplus \cdots\oplus E^{u,\sigma}_k$ be the invariant splitting for $\sigma\in\{f,g\}$. For any invariant measure $\mu$ of $\sigma\in\{f,g\}$, we denote by $\lambda^s_{l}(\mu,\sigma)<\lambda^s_{l-1}(\mu,\sigma)<\cdots<\lambda^s_1(\mu,\sigma)<0<\lambda^u_1(\mu,\sigma)<\cdots<\lambda^u_k(\mu,\sigma)$ the Lyapunov exponents of $\mu$ and $\sigma\in\{f,g\}$.

It is well known that the invariant bundles $E^{s,\sigma}_i$ and $E^{u,\sigma}_j$ are uniquely integrable and tangent to $\sigma$-invariant foliation $\cF^{s,\sigma}_i$ and $\cF^{u,\sigma}_j$ for $\sigma\in\{f,g\}$ (see for instance \cite{G08}[Lemma 6.1]). We denote by $\Gibb^{u,i}(\sigma)$ the space of Gibbs expanding states for the map $\sigma$ and the foliation $\cF^{u,\sigma}_i$ and $\Gibb^{s,j}(\sigma)$ the Gibbs expanding states for the map $\sigma^{-1}$ and the foliation $\cF^{s,\sigma}_i$.

\begin{maincor}\label{maincor.generalconjugation}

Suppose that the $C^2$ Anosov diffeomorphisms $f$ and $g$ are as above: sufficiently close to an irreducible linear Anosov diffeomorphism over the torus $\TT^d$ with $d$ real eigenvalues of different norm, such that the one-dimensional invariant sub-bundles are preserved. Suppose that the orbit of every leaf of $\cF^{s,f}_i$ and every leaf of $\cF^{u,f}_j$ are dense, for $i=1,\cdots, l$ and $j=1,\cdots, k$. Then the conjugacy $h$ between $f$ and $g$ is $C^{1+\vep}$ if and only if the following happens: for any $1\leq i \leq k$ and for any ergodic measure $\mu\in \Gibb^{u,i}(f)$, we have that $\lambda^u_i(\mu,f)=\lambda^u_i(h_*(\mu),g)$, and for any $1\leq j \leq l$ and for any ergodic measure $\mu\in \Gibb^{s,j}(f)$, we have that $\lambda^s_j(\mu,f)=\lambda^s_j(h_*(\mu),g)$.

\end{maincor}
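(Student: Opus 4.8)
The plan is to deduce Corollary~\ref{maincor.generalconjugation} from the main technical result Theorem~\ref{main.technique} together with Journé's regularity theorem \cite{J}, following the same scheme used to prove Theorem~\ref{main.higherdimension} (of which it is a generalization), the only difference being that $g$ need not be linear and that the role played there by the volume is now played by arbitrary Gibbs expanding states. The density hypothesis on the one-dimensional sub-foliations is exactly what is needed to pass from ``smoothness on the support of a Gibbs $e$-state'' to ``smoothness everywhere''.

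The necessity is the easy direction. If $h$ is $C^{1+\vep}$, then $Dh$ is a bounded continuous bundle isomorphism conjugating the derivative cocycle of $f$ with that of $g$; since domination is a rate condition preserved up to bounded factors and the dominated splitting into one-dimensional bundles is unique and ordered, $Dh$ carries $E^{u,f}_i$ to $E^{u,g}_i$ and $E^{s,f}_j$ to $E^{s,g}_j$, so by unique integrability $h(\cF^{u,f}_i)=\cF^{u,g}_i$ and $h(\cF^{s,f}_j)=\cF^{s,g}_j$. Writing the conjugacy relation along the one-dimensional bundle $E^{u,f}_i$ shows that $\log\|Df|_{E^{u,f}_i}\|$ equals $\log\|Dg|_{E^{u,g}_i}\|\circ h$ plus the coboundary $\phi-\phi\circ f$ with $\phi=\log\|Dh|_{E^{u,f}_i}\|$ bounded, hence integrable. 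Integrating against any $f$-invariant measure $\mu$ gives $\lambda^u_i(\mu,f)=\lambda^u_i(h_*\mu,g)$, and symmetrically in the stable direction; in particular the equalities hold for every ergodic Gibbs expanding state.

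For the sufficiency the steps, in order, would be: (i) verify that the conjugacy $h$ maps $\cF^{u,f}_i$ onto $\cF^{u,g}_i$ and $\cF^{s,f}_j$ onto $\cF^{s,g}_j$ — here I would invoke the results of Gogolev and others (\cite{G08,G,GG,GKS}), available because $f$ and $g$ are $C^1$ close to the irreducible $L$ — so that Theorem~\ref{main.technique} applies with $(\cF^f,\cF^g)=(\cF^{u,f}_i,\cF^{u,g}_i)$, each being a one-dimensional expanding foliation of $f$ with uniformly $C^{1+\vep}$ leaves by the invariant section theorem; (ii) for each $i$ pick, using (A1)--(A2), an ergodic $\mu_i\in\Gibb^{u,i}(f)$; by (A3) its support is a closed, $f$-invariant, $\cF^{u,f}_i$-saturated set, hence contains a whole leaf of $\cF^{u,f}_i$ together with its $f$-orbit, and since every such leaf has dense orbit, $\supp(\mu_i)=M$; (iii) the hypothesis $\lambda^u_i(\mu_i,f)=\lambda^u_i(h_*\mu_i,g)$ is exactly condition (B4) of Theorem~\ref{main.technique} for the one-dimensional foliation $\cF^{u,f}_i$, so (B5$'$) holds and $h$ is uniformly $C^{1+\vep}$ along every leaf of $\cF^{u,f}_i$; (iv) run the same argument for $f^{-1},g^{-1}$ and the foliations $\cF^{s,f}_j$ to get uniform $C^{1+\vep}$ regularity of $h$ along those leaves; (v) glue: $h$ is now uniformly $C^{1+\vep}$ along $d$ transverse one-dimensional invariant foliations whose tangent bundles span $TM$, so by iterated application of Journé's theorem \cite{J}, together with the integrability of the intermediate distributions in the close-to-irreducible setting, exactly as in the proof of Theorem~\ref{main.higherdimension} and in \cite{G08,G,GG}, the map $h$ is globally $C^{1+\vep}$ (with a possibly smaller $\vep>0$).

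I expect the two genuinely delicate points to be steps (i) and (v): showing that the conjugacy respects the whole flag of one-dimensional sub-foliations, and carrying out the Journé gluing with the correct handling of the intermediate (non-strong) distributions. Both are technical matters that already appear in the literature on local rigidity of Anosov automorphisms on tori, so I would dispatch them by citation rather than reprove them; all of the new content is concentrated in the application of Theorem~\ref{main.technique}, for which the density assumption guarantees the existence of a full-support Gibbs $e$-state and hence the upgrade from leafwise smoothness on a support to leafwise smoothness on all of $M$.
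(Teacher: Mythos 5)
Your overall strategy is the one the paper uses: its proof of Corollary \ref{maincor.generalconjugation} consists precisely of repeating the proof of Theorem \ref{main.higherdimension} with $L$ replaced by $g$, the role of the volume being played, for each one-dimensional foliation, by an ergodic Gibbs $e$-state whose support is the whole of $\TT^d$ thanks to (A3) of Proposition \ref{p.Gibbsestate} together with the density hypothesis --- exactly your steps (ii)--(v). Your necessity argument via the bounded coboundary $\log\|Dh|_{E^{u,f}_i}\|$ is also fine (equivalently one can quote the implication (B5$^\prime$)$\Rightarrow$(B4) of Theorem \ref{main.technique}).

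The one point that needs correcting is the logical order of your step (i). For $i\geq 2$ the identity $h(\cF^{u,f}_i)=\cF^{u,g}_i$ is not an a priori fact that can be dispatched by citation before any regularity of $h$ is known: what the topological conjugacy preserves automatically are only the weak flag foliations $\cF^{u,\cdot}_{1,j}$ and $\cF^{s,\cdot}_{1,i}$ (Lemma \ref{l.centerintegrable}), hence $\cF^{u,f}_1$ and $\cF^{s,f}_1$; the preservation of the intermediate one-dimensional foliations is Gogolev's Lemma \ref{l.inductioncenterleaf}, whose hypothesis is precisely that $h$ is already uniformly $C^{1+\vep}$ along $\cF^{u,f}_1,\dots,\cF^{u,f}_j$. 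So your (i) and (iii) must be interleaved in an induction on $i$: start with $\cF^{u,f}_1$, apply Theorem \ref{main.technique} with a full-support ergodic Gibbs $e$-state to get leafwise $C^{1+\vep}$ regularity there, then invoke Lemma \ref{l.inductioncenterleaf} to obtain $h(\cF^{u,f}_2)=\cF^{u,g}_2$, and so on, exactly as in the proof of Theorem \ref{main.higherdimension}; the stable side is handled the same way for $f^{-1},g^{-1}$, and the Journ\'e gluing of step (v) then goes through as you describe. With this reordering your argument coincides with the paper's.
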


\section{Three dimensional Anosov (Proof of Theorem \ref{main.3d})}\label{s.3danosov}

In this section, we are assuming Theorem~\ref{main.technique} and we will provide the proof of Theorem~\ref{main.3d}. We need the Journ\'{e} regularity lemma from \cite{J} for maps:

\begin{lemma}\label{l.Jour}

Let $M_j$ be a smooth manifold and $\cF^j_1$ and $\cF^j_2$, $j\in\{1,2\}$, be transverse foliations of the manifold $M_j$ whose leaves are uniformly $C^{n+\gamma}$ $n\in\mathbb N$, $n\geq 1$ and $\gamma\geq 0$. Suppose that $h: M_1\to M_2$ is a homeomorphism that maps $\cF^1_1$ into $\cF^2_1$ and $\cF^1_2$ to $\cF^2_2$. Moreover assume that the restriction of $h$ to the leaves of these foliations are uniformly $C^{n+\gamma}$, then $h$ is $C^{n+\gamma}$ if $\gamma>0$, otherwise, $h$ is $C^{n-\vep}$ for any $\vep>0$.

\end{lemma}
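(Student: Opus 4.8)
The plan is to reduce Lemma~\ref{l.Jour} to a local, analytic regularity statement about real-valued functions that are regular separately along the leaves of two transverse foliations, and then prove that statement by induction on the order of smoothness. Throughout, ``uniformly $C^{n+\gamma}$'' is read as a uniform bound over all leaves on the corresponding $C^{n+\gamma}$-norms. First I would localize: since being $C^{n+\gamma}$ is a local, chart-independent property and $M_1,M_2$ are covered by finitely many smooth charts, it suffices to fix $p\in M_1$, choose smooth charts near $p$ and $h(p)$, and treat $h$ as a homeomorphism between neighbourhoods of the origin in $\RR^d$ (with $d=\dim M_1=\dim M_2$, the equality forced by transversality together with the fact that the two foliations span). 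The key structural input, uniform by compactness, is the \emph{local product structure}: on a fixed small neighbourhood any two nearby points $q,q'$ have a unique ``corner'' $[q,q']$ on $\cF^1_1(q)\cap\cF^1_2(q')$, with
\[
d_{\cF^1_1}(q,[q,q'])+d_{\cF^1_2}([q,q'],q')\le C\,d(q,q'),
\]
where $d_{\cF^1_i}$ denotes leaf distance; moreover leaf distance is comparable to ambient distance for nearby points of a leaf. Since $h$ is a homeomorphism carrying each web onto the other, it sends corners to corners.

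It then suffices to prove the following analytic statement: if $\cF,\cG$ are transverse continuous foliations of a neighbourhood of $0\in\RR^d$ with uniformly $C^{n+\gamma}$ leaves and a local product structure as above, and $u$ is continuous and uniformly $C^{n+\gamma}$ along the leaves of $\cF$ and along those of $\cG$, then $u$ is $C^{n+\gamma}$ at $0$ when $\gamma>0$, and lies in the Hölder--Zygmund class $\Lambda^{n}$ (hence is $C^{n-\vep}$ for every $\vep>0$) when $\gamma=0$. Applying this to each coordinate function of $h$, using that the image foliations $h(\cF^1_i)$ have uniformly $C^{n+\gamma}$ leaves so that ``$C^{n+\gamma}$ along $\cF^1_i$-leaves'' is meaningful in a smooth chart of $M_2$, yields the Lemma. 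I stress that one does \emph{not} straighten the two foliations simultaneously — two generic transverse foliations are not coordinate foliations in any chart, and a merely Hölder change of variables would destroy regularity — so all estimates are carried out along the actual, curved leaves.

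For the analytic statement I would induct on $n$. The base case $0<s:=n+\gamma\le1$ is the two-leg estimate
\[
|u(q)-u(q')|\le|u(q)-u([q,q'])|+|u([q,q'])-u(q')|\le C\,d_{\cF}(q,[q,q'])^{s}+C\,d_{\cG}([q,q'],q')^{s}\le C'\,d(q,q')^{s},
\]
which gives joint $C^{s}$ regularity for $s<1$, while at $s=1$ the same chain produces only a Zygmund-type modulus, hence $C^{1-\vep}$. For the inductive step, the $n=0$ case applied to the leafwise derivatives shows that $\partial_{\cF}u$ and $\partial_{\cG}u$ are continuous (in fact uniformly $C^{n-1+\gamma}$ along their own leaves); one then upgrades this, using that $T\cF$ and $T\cG$ span together with a standard integration-along-leaves argument, to the statement that $u$ is differentiable with $Du$ the field assembled from $\partial_{\cF}u$ and $\partial_{\cG}u$, and the inductive hypothesis applied to $\partial_{\cF}u,\partial_{\cG}u$ gives $Du\in C^{n-1+\gamma}$, i.e. $u\in C^{n+\gamma}$ (resp. $u\in\Lambda^{n}$ when $\gamma=0$).

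The main obstacle is the input needed to run that inductive step: from $u$ being $C^{n+\gamma}$ along $\cG$-leaves one must deduce that the $\cF$-derivative $\partial_{\cF}u$ is $C^{n-1+\gamma}$ along $\cG$-leaves — a genuine mixed-regularity statement, since a priori we control only $u$, not $\partial_{\cF}u$, in the $\cG$-direction. This is the analytic heart of Journé's lemma, and I would quote it from \cite{J}: it is proved by approximating $u$ at dyadic scales (mollifying transversally and using leafwise regularity to keep the approximants uniformly $C^{n+\gamma}$ along leaves), controlling the growth of the derivatives of the approximants in the $\cF$- and $\cG$-directions by Bernstein-type inequalities, and interpolating; at integer order the natural class one lands in is the Hölder--Zygmund class, which is strictly larger than $C^{n}$, and this is the source of the $\vep$-loss. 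The only remaining point is to check that the hypotheses of this estimate survive the passage from coordinate directions to curved leaves, which is exactly what the uniform local product structure recorded above provides.
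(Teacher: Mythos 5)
The paper does not actually prove this lemma --- it is quoted directly from Journ\'e \cite{J} --- and your proposal is the standard (and correct) derivation of the map version: localize, read the coordinate functions of $h$ in a smooth chart of $M_2$, note that the leafwise hypotheses (together with the uniform $C^{n+\gamma}$ regularity of the image leaves) make these functions uniformly $C^{n+\gamma}$ along the two transverse foliations of $M_1$, and invoke Journ\'e's scalar regularity theorem, whose analytic heart you rightly cite from \cite{J}. Since the paper's ``proof'' is the same citation, there is nothing substantive to compare beyond observing that your reduction --- including the caveat about not straightening the two foliations simultaneously and the Zygmund-class loss responsible for the $C^{n-\vep}$ conclusion when $\gamma=0$ --- is sound.
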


\begin{proof}[Proof of Theorem~\ref{main.3d}:]

We divide the proof into xxx steps.

\subsection{Basic topological facts}

By hypothesis, $L$ has three different eigenvalues, and without restriction, we can assume that they are all positive:
$0<k_1<1<k_2<k_3$. Also $L$ admits a splitting
$$
T\TT^3=E^s\oplus E^{wu}\oplus E^{su}.
$$
By our assumption, $L$ preserves the orientation of $E^i$ ($i\in\{s,wu,su\}$).

We denote by $\cA(L)$ the set of partially hyperbolic diffeomorphisms in the same isotopy class with $L$, which are called \emph{diffeomorphisms derived from Anosov}. By Franks \cite{Fra70}, for any $f\in \cA(L)$, there is a unique semi-conjugacy $h_f$ between $f$ and $L$ which is isotopic to the identity map. We need the following standard topological classification of derived from Anosov diffeomorphism by \cite{BBI09, Ham13,HaP,Pot15,Ure12}:

\begin{lemma}[\cite{YY} Proposition 2.1]\label{l.DA}

Let $f\in \cA(L)$, then $f$ is dynamically coherent, the Franks' semi-conjugation $h_f$ maps the center stable, center, center unstable and stable leaves of $f$ into the corresponding leaves of $L$.

\end{lemma}

In particular, if $f$ is an Anosov diffeomorphism, by Franks~\cite{Fra70}, the semi-conjugacy $h_f$ is indeed a conjugacy.
Denote by $\cF^s_f$, $\cF^{wu}_f$, $\cF^{su}_f$ the stable, weak-unstable and strong unstable foliation of $f$.
Then by the above lemma,

\begin{equation}\label{eq.stableandcenter}
h_f(\cF^{i}_f)=\cF^i_L \text{ for $i=s,wu$ }.
\end{equation}

\subsection{$h_{f*}\Leb=\Leb$ and $h_f\mid \cF^{s}_f$ is uniformly $C^r$}

By the hypotheses, the stable exponent of $L$ and $f$ coincide. Moreover, the support of the volume measure coincides with the whole $\TT^3$, and since $f$ preserves the volume and is Anosov, we have that the volume is ergodic and is a Gibbs $s$-state. Then as an application of Theorem~\ref{main.technique} (for $f^{-1}$ and $L^{-1}$), we get that $(h_f)_*\Leb$ is a Gibbs $s$-state of $L$, so it is the volume, and $h_f\mid \cF^{s}_f$ is uniformly $C^r$ for any $r>1$.

In fact it is easy to see from another general argument that $h_f$ preserves the volume. By Pesin formula, the metric entropy of $f$ with respect to the volume is equal to $-\log k_1$, and this must be equal to the metric entropy of $L$ with respect to $h_{f*}\Leb$. But this entropy is maximal, and since the measure of maximal entropy is unique, we have that $h_{f*}\Leb=\Leb$. So now alternatively one can apply Theorem~\ref{main.technique} and get that $h_f^{-1}\mid \cF^{s}_L$ is uniformly $C^r$.

\subsection{$h_f^{-1}\mid \cF^{wu}_L$ is uniformly $C^{1+\vep}$}

In the next step we are going to show that $h_f^{-1}\mid \cF^{wu}_L$ is uniformly $C^{1+\vep}$. Let $\vep>0$ such that $\cF^{wu}_f$ has uniformly $C^{1+\vep}$ leaves, such an $\vep$ exists because the center bundle is H\H older. From the definition, it is clear that $\Leb$ is a Gibbs $e$-state of $L$ for the expanding foliation $\cF^{wu}_L$, and whose support is the whole manifold. Recall that by Lemma~\ref{l.DA}, $h_f^{-1}(\cF^{wu}_L)=\cF^{wu}_f$, $h_{f*}^{-1}\Leb=\Leb$, and by the hypothesis, the center exponents of $L$ and $f$ are the same. Hence as an application of Theorem~\ref{main.technique}, we obtain that $h_f^{-1}\mid \cF^{wu}_L$ is uniformly $C^{1+\vep}$.

\subsection{$h_f^{-1}\mid \cF^{su}_L$ is uniformly $C^r$}

Now we need to establish the regularity of the conjugacy along the strong unstable foliation. First we need the following lemma due to Gogolev in order to show that the strong unstable foliations are preserved by the conjugacy:

\begin{lemma}[\cite{GG,GKS}]\label{l.3dstrongunstable}

Suppose $h_f$ is uniformly smooth along $\cF^{c}_f$, then $h_f(\cF^{su}_f)=\cF^{su}_L$.

\end{lemma}

Because the strong unstable exponents for $f$ and $L$ are the same, and the volume is an ergodic Gibbs $u$-sate, applying again the Theorem~\ref{main.technique} for the strong unstable foliations and $h_f^{-1}$, we obtain that $h_f^{-1}\mid \cF^{su}_L$ is uniformly $C^r$ for any $r>1$.

\subsection{Regularity of $h_f$}

Now we apply the Journ\'e Lemma twice, first we apply the lemma on every unstable leaf of $L$ for the pair of foliations $\cF^{wu}_L$ and $\cF^{su}_L$ and conclude that $h_f^{-1}\mid_{\cF^u_L}$ is uniformly $C^{1+\vep}$. Then we apply the lemma to the stable and unstable foliation of $L$ to show that $h_f^{-1}$ (and consequently $h_f$) is $C^{1+\vep}$.

Finally, by the bootstrapping argument of Gogolev \cite{G}, the conjugacy $h_f$ is indeed $C^\infty$. The proof is complete.

\end{proof}

\section{Higher dimensional Anosov (Proof of Theorem \ref{main.higherdimension} and Corollaries \ref{maincor.ECvolume}, \ref{maincor.Katokexample})\label{s.highdAnosov}}

In this Section we consider Anosov diffeomorphisms in dimension larger than three and prove Theorem~\ref{main.higherdimension}, Corollary~\ref{maincor.Katokexample} and Corollary~\ref{maincor.generalconjugation}, assuming the Theorem~\ref{main.technique}.

\subsection{Standard hypothesis}\label{standhyp}

In this subsection, we give the notations and the hypothesis which will be used throughout this section.

Let $L$ be an irreducible linear Anosov diffeomorphism on the torus $\TT^d$ with $d$ real eigenvalues of different absolute value. Denote by $\lambda^s_{l}<\lambda^s_{l-1}<\cdots<\lambda^s_1<0<\lambda^u<\cdots<\lambda^u_k$ the exponents of $L$ and by $E^s_{l}\oplus E^s_{l-1}\oplus \cdots\oplus E^s_1\oplus E^u_1\oplus \cdots\oplus E^u_k$ the invariant splitting corresponding to the eigenvectors. For a $C^2$ volume preserving diffeomorphism $f$, belonging to a small enough $C^1$ neighborhood $\cU$ of $L$, the dominated splitting from above will persist. Denote by $E^{s,f}_{l}\oplus E^{s,f}_{l-1}\oplus \cdots\oplus E^{s,f}_1\oplus E^{u,f}_1\oplus \cdots\oplus E^{u,f}_k$ the corresponding invariant splitting, and for any invariant measure $\mu$ of $f$, denote by $\lambda^s_{l}(\mu,f)<\lambda^s_{l-1}(\mu,f)<\cdots<\lambda^s_1(\mu,f)<0<\lambda^u_1(\mu,f)<\cdots<\lambda^u_k(\mu,f)$ the exponents of $f$ with respect to $\mu$.

\begin{lemma}[\cite{G08} Lemma 6.2]\label{l.1dintegrable}

If the neighborhood $\cU$ is sufficiently small, then the invariant bundles $E^{s,f}_i$ and $E^{u,f}_j$ are uniquely integrable and tangent to $f$-invariant foliation $\cF^{s,f}_i$ and $\cF^{u,f}_j$ with uniform $C^{1+\vep}$ leaves for some $\vep>0$, for all $1\leq i\leq l$ and $1\leq j\leq k$.

\end{lemma}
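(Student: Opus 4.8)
The plan is to first observe that the one‑dimensional dominated splitting of $L$ persists for every $f\in\cU$, and then to integrate each one‑dimensional sub‑bundle by peeling it off, with a two‑step argument, from one of the two extremal ends of the flag. Only the unique integrability of \emph{extremal} sub‑bundles of a dominated splitting — i.e. strong stable/unstable manifold theorems — will be used: first on $\TT^d$, and then inside the leaves produced at the first step.

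\emph{Step 1 (persistence of the splitting).} Since $L$ has $d$ real eigenvalues of pairwise distinct absolute value, $T\TT^d=E^s_l\oplus\cdots\oplus E^s_1\oplus E^u_1\oplus\cdots\oplus E^u_k$ is a dominated splitting into one‑dimensional sub‑bundles, with a uniform spectral gap between any two consecutive ones in this order. Domination and uniform hyperbolicity are $C^1$‑open and are reproduced by the invariant‑cone‑field / graph‑transform construction, so for $\cU$ small enough every $f\in\cU$ is Anosov and carries a $Df$‑invariant dominated splitting $E^{s,f}_l\oplus\cdots\oplus E^{s,f}_1\oplus E^{u,f}_1\oplus\cdots\oplus E^{u,f}_k$ into one‑dimensional H\"older sub‑bundles with the same domination pattern, $E^{s,f}=\bigoplus_i E^{s,f}_i$ being uniformly contracted and $E^{u,f}=\bigoplus_j E^{u,f}_j$ uniformly expanded, with all the relevant rates and gaps uniform over $\cU$.

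\emph{Step 2 (integration).} Fix $1\le j\le k$ and set
\[
A=E^{s,f}_l\oplus\cdots\oplus E^{s,f}_1\oplus E^{u,f}_1\oplus\cdots\oplus E^{u,f}_{j-1},\qquad B=E^{u,f}_{j+1}\oplus\cdots\oplus E^{u,f}_k,
\]
so that $TM=A\oplus E^{u,f}_j\oplus B$ and $F:=E^{u,f}_j\oplus B$ is uniformly expanded by $Df$ and dominates $A$ (domination is stable under grouping). Thus $F$ is the strong unstable sub‑bundle of the partially hyperbolic splitting $A\oplus F$, hence by \cite{HPS70,HPS77} it is uniquely integrable, tangent to an $f$‑invariant foliation $\cW_{\ge j}$ whose leaves are as smooth as $f$ (in particular uniformly $C^{1+\vep}$). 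On a leaf $W$ of $\cW_{\ge j}$ one has $TW=E^{u,f}_j\oplus B$, a dominated splitting with $B$ the faster bundle; equivalently $E^{u,f}_j$ is the strong stable sub‑bundle of the restriction of $f^{-1}$ to $\cW_{\ge j}$. Applying the strong stable manifold theorem to this restricted cocycle inside $\cW_{\ge j}$ — legitimate because the relevant rates and distortions are uniform over $M$ — produces an $f$‑invariant sub‑foliation of $\cW_{\ge j}$ by $C^{1+\vep}$ curves tangent to $E^{u,f}_j$; since any integral curve of $E^{u,f}_j$ stays in a single leaf of $\cW_{\ge j}$, this shows $E^{u,f}_j$ is uniquely integrable in $M$ with uniformly $C^{1+\vep}$ leaves (the case $j=k$ being simply the strong unstable foliation of $f$). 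The sub‑bundles $E^{s,f}_i$, $1\le i\le l$, are integrated by the same two‑step argument applied to $f^{-1}$.

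\emph{Main obstacle.} The delicate point is making Step 2 rigorous. A H\"older one‑dimensional distribution need not be uniquely integrable, so one must genuinely invoke the strong (un)stable manifold theorem — both on $M$ and, crucially, in the version for an invariant sub‑cocycle over $f$ inside the (only H\"older‑varying, non‑compact) leaves of $\cW_{\ge j}$ — rather than any ODE‑uniqueness statement, and one must check that the sub‑foliation obtained leaf‑by‑leaf inside $\cW_{\ge j}$ glues into a genuine continuous foliation of $M$. This is exactly where one‑dimensionality of $E^{u,f}_j$ together with the uniform domination are used, so that $E^{u,f}_j$ is always an \emph{extremal}, hence integrable, piece of the dominated splitting it lives in. The remaining bookkeeping is to verify that the smoothness exponent $1+\vep$ of the leaves can be chosen uniformly, depending only on the spectral gaps of $L$ and hence valid throughout $\cU$.
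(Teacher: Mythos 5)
The first half of your argument is fine: Step 1 (persistence of the dominated splitting into one--dimensional H\"older bundles) and the first half of Step 2 (unique integrability of each strong flag $F=E^{u,f}_j\oplus\cdots\oplus E^{u,f}_k$, since $F$ is uniformly expanded and dominates the rest) are correct. But the second half of Step 2 rests on a claim that is backwards, and this is a genuine gap. On a leaf $W$ of $\cW_{\ge j}$ both summands of $TW=E^{u,f}_j\oplus B$ are expanded by $f$, and $B$ is expanded \emph{more} strongly; hence under $f^{-1}$ it is $B$, not $E^{u,f}_j$, that is the most contracted direction. The strong stable manifold theorem for $f^{-1}$ restricted to $\cW_{\ge j}$ therefore integrates $B$ (which you already have), whereas $E^{u,f}_j$ is the \emph{weak} stable bundle inside $W$: extremal, but on the slow side, i.e.\ expanded by $f$ yet dominated by the complementary part of $TW$. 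That is exactly the type of bundle the Hirsch--Pugh--Shub stable/unstable manifold theorems do not integrate, and a H\"older slow bundle has no a priori unique integrability. The same defect occurs for every $j\le k-1$ (and dually on the stable side), so your scheme only produces $\cF^{u,f}_k$ and $\cF^{s,f}_l$. In particular your announced guiding principle --- that only unique integrability of fast extremal pieces of dominated splittings is needed --- cannot prove the lemma; this is precisely where the hypothesis that $f$ is $C^1$-close to the \emph{linear} model has to enter beyond the mere persistence of the splitting.

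For comparison, the paper itself gives no proof: the statement is quoted from \cite{G08} (Lemma 6.2), and the argument there is the mirror image of yours. One first integrates the \emph{weak} flags $E^{u,f}_{1,j}=E^{u,f}_{1}\oplus\cdots\oplus E^{u,f}_{j}$ and $E^{s,f}_{1,i}$, not the strong ones. Their integrability does not come from the strong unstable manifold theorem; it comes from the Hirsch--Pugh--Shub theory \cite{HPS77} of persistence of normally hyperbolic foliations, applied to the linear foliations of $L$ tangent to $E^{u}_{1}\oplus\cdots\oplus E^{u}_{j}$, which are smooth, normally hyperbolic (the transverse expansion rate $\lambda^u_{j+1}$ beats the maximal tangential rate $\lambda^u_j$, and the transverse contraction beats any tangential behavior) and plaque expansive; this is \cite{G08} Lemma 6.1, also invoked in the paper right after the statement, and it is the one step where $C^1$-closeness to $L$ is used in an essential way. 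Once the weak flag foliation $\cF^{u,f}_{1,j}$ is available, $E^{u,f}_j$ \emph{is} the fastest direction inside its leaves, so your leafwise manifold-theorem step works there (applied inside the weak flag rather than the strong one) and produces $\cF^{u,f}_j$; equivalently, $\cF^{u,f}_j$ is obtained as the intersection of $\cF^{u,f}_{1,j}$ with your $\cW_{\ge j}$, which also settles your gluing concern. The uniform $C^{1+\vep}$ regularity of the leaves is then automatic from the H\"older continuity of $E^{u,f}_j$: a $C^1$ curve everywhere tangent to a H\"older line field is $C^{1+\vep}$, with uniform constants over $\cU$.
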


We denote by $\Gibb^{u,i}(f)$ the space of Gibbs expanding states for $f$ and the expanding foliation $\cF^{u,f}_i$ and $\Gibb^{s,j}(f)$ the Gibbs expanding states for the map $f^{-1}$ and the foliation $\cF^{s,f}_i$.

Write $E^{s,f}_{1,i}=E^{s,f}_{i}\oplus \cdots\oplus E^{s,f}_1$ and $E^{u,f}_{1,j}=E^{u,f}_{1}\oplus \cdots\oplus E^{u,f}_j$, then, by \cite{HPS77}, these bundles are uniquely integrable (for a proof see also \cite{G08}[Lemmas 6.1]). Denote by $\cF^{s,f}_{1,i}$ and $\cF^{u,f}_{1,j}$ the corresponding integral foliations. The following is a direct corollary of \cite{HPS77}.

\begin{lemma}[\cite{G08} Lemma 6.3]\label{l.centerintegrable}

For two Anosov diffeomorphisms $f,g\in \cU$, denote by $h$ the conjugacy between $f$ and $g$. Then for any $1\leq i\leq l$ and $1\leq j \leq k$, we have that

$$
h(\cF^{s,f}_{1,i})=\cF^{s,g}_{1,i} \text{ and } h(\cF^{u,f}_{1,j})=\cF^{u,g}_{1,j}.
$$

\end{lemma}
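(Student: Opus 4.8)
The plan is to factor through the linear model. Since $\cU$ is a $C^1$-neighborhood of $L$ we have $L\in\cU$, and for Anosov $f,g\in\cU$ there are conjugacies $h_f,h_g$ of $f,g$ with $L$ that are homotopic to the identity (Franks \cite{Fra70} together with structural stability) and that can be chosen $C^0$-close to the identity after shrinking $\cU$. Then $h=h_g^{-1}\circ h_f$ is the conjugacy between $f$ and $g$, so the lemma reduces to proving, for every Anosov $f\in\cU$,
$$
h_f(\cF^{s,f}_{1,i})=\cF^{s,L}_{1,i} \quand h_f(\cF^{u,f}_{1,j})=\cF^{u,L}_{1,j},
$$
after which one composes with $h_g^{-1}$. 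I would treat only the stable case; the unstable one is obtained by replacing $f,L$ with $f^{-1},L^{-1}$.

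The next step is to record the normal hyperbolicity. The foliation $\cF^{s,L}_{1,i}$ is normally hyperbolic for $L$ with respect to the splitting $T\TT^d=E^s_{1,i}\oplus(E^s_{i+1,l}\oplus E^u)$: the normal bundle splits as $E^s_{i+1,l}$, which $L$ contracts strictly more than $E^s_{1,i}$ because $\lambda^s_{i+1}<\lambda^s_i$, together with $E^u$, which $L$ expands, and both normal rates dominate the tangential ones along $E^s_{1,i}$. Its leaves are affine, hence $C^\infty$, so it is plaque expansive. Shrinking $\cU$ if needed, the dominated sub-bundle $E^{s,f}_{1,i}$ persists and stays $C^0$-close to $E^s_{1,i}$, and by the unique integrability recalled just before the statement (see also Lemma~\ref{l.1dintegrable} and \cite{G08}[Lemma 6.1]) it integrates to the foliation $\cF^{s,f}_{1,i}$, whose local leaves are $C^1$-close to those of $\cF^{s,L}_{1,i}$.

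The heart of the argument is then a uniqueness statement. From $h_f\circ f=L\circ h_f$ and the $f$-invariance of $\cF^{s,f}_{1,i}$ it follows that $h_f(\cF^{s,f}_{1,i})$ is an $L$-invariant continuous foliation of $\TT^d$; moreover it is $C^0$-close to $\cF^{s,L}_{1,i}$, since $h_f$ is $C^0$-close to the identity and $\cF^{s,f}_{1,i}$ is $C^0$-close to $\cF^{s,L}_{1,i}$. The uniqueness part of the theory of normally hyperbolic laminations \cite{HPS77} asserts that $\cF^{s,L}_{1,i}$ is the only $L$-invariant foliation lying in a $C^0$-neighborhood of itself. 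Hence $h_f(\cF^{s,f}_{1,i})=\cF^{s,L}_{1,i}$, and, composing with $h_g^{-1}$, $h(\cF^{s,f}_{1,i})=\cF^{s,g}_{1,i}$.

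The delicate point is the invocation of \cite{HPS77} in the last step: $h_f$ is only H\"older continuous, so the leaves of $h_f(\cF^{s,f}_{1,i})$ are a priori only topological submanifolds, and one must check that an $L$-invariant $C^0$-small perturbation of $\cF^{s,L}_{1,i}$ in this weak class still coincides with it. This is handled by the standard fact that an invariant foliation which is merely $C^0$-close is automatically flattened by the graph transform into the uniform cone field used in the persistence theory, so that the contraction argument applies and yields uniqueness; alternatively one may identify $h_f$ directly with the leaf conjugacy furnished by \cite{HPS77}, using expansiveness of the Anosov dynamics together with the fact that $h_f$ is homotopic to the identity and maps the stable foliation of $f$ onto that of $L$. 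Either route is routine once the normal hyperbolicity of $\cF^{s,L}_{1,i}$ has been set up, which is why the lemma is stated as a direct corollary of \cite{HPS77}.
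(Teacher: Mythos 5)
There is a genuine gap, and it sits exactly at the point you flag as "delicate" and then dismiss as routine. Your scheme is: $h_f(\cF^{s,f}_{1,i})$ is an $L$-invariant continuous foliation $C^0$-close to $\cF^{s,L}_{1,i}$, hence equal to it by "the uniqueness part of \cite{HPS77}". But \cite{HPS77} contains no uniqueness statement for merely continuous invariant foliations: its uniqueness is for invariant objects that are Lipschitz/$C^1$ graphs over the unperturbed leaves (fixed points of the graph transform on a space of Lipschitz sections), or it is formulated via plaque expansivity for foliations with $C^1$ plaques tangent to the continued bundles. The leaves of $h_f(\cF^{s,f}_{1,i})$ are only H\"older images, so they are not graphs and carry no cone-field information, and your two proposed patches do not repair this: "flattening by the graph transform" is not defined for non-Lipschitz leaves without an argument (and the contraction estimate in \cite{HPS77} is exactly what is missing), while "identifying $h_f$ with the leaf conjugacy" begs the question — the leaf conjugacy tells you some homeomorphism matches leaves to leaves, not that the topological conjugacy does, and upgrading one to the other is essentially the statement to be proved (note also that the leaf space here is non-Hausdorff, so you cannot simply quotient and invoke expansiveness). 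Since this one step is the entire content of the lemma, the proposal as written does not constitute a proof.

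What actually makes the lemma work (this is Gogolev's argument in \cite{G08}, which the paper quotes without proof, merely attributing it to \cite{HPS77}) is a quantitative fact you mention only in passing and never use: because $h$ (or $h_f$) is homotopic to the identity, its lift to $\RR^d$ is at \emph{uniformly bounded distance} from the identity, so for all $x,y$ and all $n$ one has $|\tilde g^{\,n}(\tilde h x)-\tilde g^{\,n}(\tilde h y)|=|\tilde f^{\,n}x-\tilde f^{\,n}y|+O(1)$; exponential growth rates of orbit separation are therefore preserved \emph{exactly}, not just up to a H\"older exponent. For $f,g\in\cU$ the rates of $Df$, $Dg$ on each one-dimensional bundle are pinched near the corresponding eigenvalue of $L$, and $h$ preserves stable and unstable leaves (these are topologically characterized). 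Inside a stable leaf, $y\in\cF^{s,f}_{1,i}(x)$ is then characterized by backward separation growing at most like $e^{\,n(|\lambda^s_i|+\vep)}$, whereas points with a component along the stronger flag separate at rate at least $e^{\,n(|\lambda^s_{i+1}|-\vep)}$; the bounded-displacement identity transports this characterization through $h$, giving $h(\cF^{s,f}_{1,i}(x))\subset\cF^{s,g}_{1,i}(h(x))$, and equality follows by running the same argument for $h^{-1}$ (the unstable flags are handled with $f^{-1},g^{-1}$). Your normal hyperbolicity set-up is fine as far as it goes, but without the bounded-displacement/rate-comparison ingredient the key uniqueness step is unsupported; with it, the appeal to a $C^0$-uniqueness principle becomes unnecessary.
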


A non-trivial fact which was shown by Gogolev is that, with some additional hypothesis, the conjugacy also preserves intermediate foliations.

\begin{lemma}[Gogolev \cite{G08} Lemma 6.6]\label{l.inductioncenterleaf}

For two Anosov diffeomorphisms $f,g\in \cU$, denote by $h$ the conjugacy between $f$ and $g$. Let $1\leq j\leq k-1$, and suppose that $h\mid_{\cF^{u,f}_i}$, ${i=1,\cdots j}$ are uniformly $C^{1+\vep}$ and $h({\cF^{u,f}_i})=\cF^{u,g}_i$ ${i=1,\cdots j}$, then $h_f(\cF^{u,f}_{j+1})=\cF^{u,g}_{j+1}$.

\end{lemma}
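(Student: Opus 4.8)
\emph{Plan.} Following Gogolev, the strategy is to characterise $\cF^{u,f}_{j+1}$ \emph{inside} the leaves of $\cF^{u,f}_{1,j+1}$ by a property that involves only the dominated splitting and the (already smooth) subfoliation $\cF^{u,f}_{1,j}$, hence is transported by $h$. First, from the hypothesis that $h|_{\cF^{u,f}_i}$ is uniformly $C^{1+\vep}$ for $i\le j$, applying Journé's Lemma \ref{l.Jour} inductively inside the leaves of $\cF^{u,f}_{1,2}\subset\cF^{u,f}_{1,3}\subset\dots\subset\cF^{u,f}_{1,j}$ (each a transverse union of the previous one and a one-dimensional $C^{1+\vep}$ leaf), we obtain that $h$ restricted to the leaves of $\cF^{u,f}_{1,j}$ is uniformly $C^{1+\vep}$. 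From $h(\cF^{u,f}_i)=\cF^{u,g}_i$ for $i\le j$ we get $h(\cF^{u,f}_{1,j})=\cF^{u,g}_{1,j}$, while Lemma \ref{l.centerintegrable} gives $h(\cF^{u,f}_{1,j+1})=\cF^{u,g}_{1,j+1}$. Thus $h$ carries each leaf $W^f:=\cF^{u,f}_{1,j+1}(x)$ onto $W^g:=\cF^{u,g}_{1,j+1}(h(x))$ and the codimension-one subfoliation $\cF^{u,f}_{1,j}|_{W^f}$ onto $\cF^{u,g}_{1,j}|_{W^g}$, fibrewise $C^{1+\vep}$ with uniform bounds.

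Next I view $W^f$ as a locally trivial, uniformly $C^{1+\vep}$ fibre bundle $\pi^f\colon W^f\to Q^f:=W^f/\cF^{u,f}_{1,j}$ over a one-dimensional quotient on which $f$ descends to a uniformly expanding map. Because $E^{u,f}_{1,j}\oplus E^{u,f}_{j+1}=E^{u,f}_{1,j+1}$ is a dominated splitting ($\lambda^u_j(\cdot,f)<\lambda^u_{j+1}(\cdot,f)$ with uniform gap on $\cU$), $E^{u,f}_{j+1}$ is the \emph{unique} continuous $Df$-invariant line field inside $E^{u,f}_{1,j+1}$ transverse to $E^{u,f}_{1,j}$; equivalently, by the normal hyperbolicity / graph-transform argument (cf. \cite{HPS77}), $\cF^{u,f}_{j+1}|_{W^f}$ is the unique $f$-invariant continuous family of local sections of $\pi^f$ foliating $W^f$, the domination supplying the contraction of the graph transform uniformly over all leaves $W^f$. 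The identical statement holds for $g\in\cU$, with $\pi^g\colon W^g\to Q^g$.

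Finally, the image $h\bigl(\cF^{u,f}_{j+1}|_{W^f}\bigr)$ is again a continuous, $g$-invariant family of local sections of $\pi^g$ foliating $W^g$: $h$ maps $W^f$ onto $W^g$ and the fibres of $\pi^f$ bijectively onto those of $\pi^g$, so each image leaf still meets every fibre exactly once; continuity of the family comes from $h$ being a homeomorphism, and $g$-invariance from $h$ conjugating $f$ to $g$ together with $f$-invariance of $\cF^{u,f}_{j+1}$. By the uniqueness of the second paragraph applied to $g$, this family must be $\cF^{u,g}_{j+1}|_{W^g}$, whence $h(\cF^{u,f}_{j+1})=\cF^{u,g}_{j+1}$. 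The delicate point — and the reason the smoothness of $h$ along the lower foliations is assumed — is that $h$ is only bi-Hölder \emph{transverse} to $\cF^{u,f}_{1,j}$, so a naive characterisation of $\cF^{u,f}_{j+1}$ via backward contraction rates would be distorted by $h$: one must arrange that the only quantitative comparison takes place inside the $\cF^{u,f}_{1,j}$-fibres, where $h$ is $C^{1+\vep}$ with uniform bounds, and then carry the non-autonomous graph-transform estimate uniformly along the non-compact leaves $W^f$. This uniform control is the heart of the matter.
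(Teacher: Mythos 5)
Your first step (leafwise Journ\'e bootstrapping to get that $h$ is uniformly $C^{1+\vep}$ along $\cF^{u,f}_{1,j}$, and $h(\cF^{u,f}_{1,j+1})=\cF^{u,g}_{1,j+1}$ from Lemma \ref{l.centerintegrable}) is fine, but it is not where the content lies, and the decisive step of your argument has a genuine gap: the uniqueness claim in your second paragraph. Domination does give that $E^{u,f}_{j+1}$ is the unique \emph{continuous invariant line field} inside $E^{u,f}_{1,j+1}$ transverse to $E^{u,f}_{1,j}$, and the graph transform gives uniqueness of invariant families of sections \emph{within a class of uniformly Lipschitz (cone-bounded) graphs}; neither applies to $h\bigl(\cF^{u,f}_{j+1}\bigr)$. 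That image is a priori only a topological foliation: its leaves are images of curves under a map which is merely H\"older transverse to $\cF^{u,f}_{1,j}$, so they need not be Lipschitz sections of $\pi^g$ nor tangent to any line field, and the graph-transform contraction cannot be run on them. Notice also that your final step never actually uses the standing hypothesis that $h$ is $C^{1+\vep}$ along $\cF^{u,f}_i$, $i\le j$: if uniqueness held at the purely topological level you invoke, then \emph{any} conjugacy preserving the weak flags (which is automatic, Lemma \ref{l.centerintegrable}) would match the strong flag foliations, with no smoothness assumption at all. This is false in general -- the failure of topological conjugacies to respect strong (flag) foliations is precisely the phenomenon that forces the hypotheses here and in Lemma \ref{l.3dstrongunstable} -- so an argument that does not use the smoothness quantitatively cannot be correct.

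For comparison, the actual mechanism (Gogolev \cite{G08}, and the variant this paper proves in Lemma \ref{l.gogolevskew} for the skew-product setting) is a proof by contradiction that uses the smoothness of $h$ along the one-dimensional foliations in an essential way. If some leaf of the strong flag were not sent to the corresponding leaf, one considers the holonomy along the strong flag foliation between two nearby leaves of the one-dimensional foliation $\cF^{u,f}_j$; this holonomy is uniformly $C^1$ because the strong flag bundle is $C^1$ inside unstable leaves. Conjugating it by $h$, which by hypothesis is uniformly $C^{1+\vep}$ along these one-dimensional leaves, produces a $C^1$ map between one-dimensional leaves for $g$ (respectively $L$). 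Writing the conjugacy relation under $f^{-n}$, $g^{-n}$ (or $L^{-n}$), using that the points collapse under backward iteration and that the derivative data along the one-dimensional directions cancels, one shows this induced map has derivative identically $1$, i.e.\ it is a translation in the linearized picture; density of the leaves and invariance under the irreducible linear model then give the contradiction. Any correct proof has to make this kind of quantitative comparison \emph{inside} the one-dimensional directions where $h$ is differentiable, rather than appeal to a soft uniqueness statement for topological transversal foliations, which is simply not available.
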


We will write $\cF^{u,f}=\cF^{u,f}_{1,k}$ and $\cF^{s,f}=\cF^{s,f}_{1,l}$.

\subsection{Proof of Theorem~\ref{main.higherdimension}}

\begin{proof}[Proof of Theorem~\ref{main.higherdimension}:]

Denote by $h_f$ the conjugacy between $f$ and $L$, meaning that $L\circ h_f=h_f\circ f$. We will prove first that $h_f\mid_{\cF^{u,f}}$ is uniform $C^{1+\vep}$. By Journ\'{e}'s regularity lemma, it is sufficient for us to show that $h_f\mid_{\cF^{u,f}_i}$ is uniformly $C^{1+\vep}$.

First we remark that since the metric entropy of $f$ with respect to the volume is equal to the metric entropy of $L$ with respect to $h_{f*}\Leb$, and by Pesin formula is maximal, we obtain again that $h$ preserves the volume. We also remark that the volume is a Gibbs expanding measure for each $\cF^{u,L}_i$, and of course it has full support.

We prove the result by induction. For $i=1$, because $h_f(\cF^{u,f}_1)=\cF^{u,L}_1$ (Lemma \ref{l.centerintegrable}), and by our hypothesis, $\lambda^{u}_1(\Leb,f)=\lambda^u_1(h_*(\Leb),L)=\lambda^u_1$, then by (B4$^\prime$) of Theorem~\ref{main.technique}, we have that $h_f\mid_{\cF^{u,f}_1}$ is uniformly $C^{1+\vep}$ for some $\vep>0$.

Now suppose that for some $1\leq j \leq l-1$ we have that $h_f\mid_{\cF^{u,f}_i}$, for ${i=1,\cdots j}$, are uniformly $C^{1+\vep}$, and that $h_f({\cF^{u,f}_i})=\cF^{u,L}_i$, for ${i=1,\cdots j}$ . Then by Lemma~\ref{l.inductioncenterleaf}, we obtain that $h_f(\cF^{u,L}_{j+1})=\cF^{u,f}_{j+1}$. Then repeating the above argument and using again Theorem \ref{main.technique}, we can show that $h\mid_{\cF^{u,f}_{j+1}}$ is uniformly $C^{1+\vep}$.

Since $h_f$ is uniformly smooth along the one dimensional expanding foliations, we conclude that $h_f\mid_{\cF^{u,f}}$ is uniformly $C^{1+\vep}$.

By a similar argument, we can show that $h_f\mid_{\cF^{s,f}}$ is also uniformly $C^{1+\vep}$. Applying the Journ\'{e} regularity lemma again, we conclude that $h_f$ is $C^{1+\vep}$.

\end{proof}

\subsection{Proof of Corollary \ref{maincor.ECvolume}}

\begin{proof}[Proof of Corollary \ref{maincor.ECvolume}:]

Assume that the first hypothesis on the volume is satisfied. Observe that by Lemma \ref{l.1dintegrable} and Lemma \ref{l.centerintegrable} we know that the unstable foliation of $f$ decomposes into one dimensional unstable foliations $\cF^{u,f}_i$, which have uniformly $C^{1+\vep}$ leaves, since all the sub-bundles are H\H older continuous. We know by hypothesis that $h_{\Leb}(f,\cF^{u,f}_i)=\lambda^u_i$, for all $1\leq i\leq k$. By Ruelle inequality, the corresponding Lyapunov exponent of $f$ for $\Leb$ and $\cF^{u,f}_i$ must satisfy
$$
h_{\Leb}(f,\cF^{u,f}_i)=\lambda^u_i\leq \lambda^u_i(f,\Leb),\ \ \forall 1\leq i\leq k.
$$
If one of these inequalities is strict, then taking the sum for all $i$ we obtain
\begin{equation}\label{eq.Lyapunovsum}
\lambda^u(f,\Leb)=\sum_{i=1}^k\lambda^u_i(f,\Leb)>\sum_{i=1}^k\lambda^u_i=\lambda^u(L),
\end{equation}
which is a contradiction, because the absolute continuity of $\cF^{u,f}$ and the so-called Ma\~ ne argument would imply that the volume of the unstable leaves grows faster than the topological growth of the linear part, which is impossible (see \cite{SX09} or \cite{Sa}).

An alternative argument is the following: By Pesin formula we have that $\lambda^u(f,\Leb)=h_{\Leb}(f)$; since $f$ and $L$ are conjugated, we have $\lambda^u(L)=h_{top}(L)=h_{top}(f)$; the Variational principle says that $h_{\Leb}(f)\leq h_{top}(f)$. This three facts together give that $\lambda^u(f,\Leb)=h_{\Leb}(f)\leq h_{top}(f)=\lambda^u(L)$, which contradicts \eqref{eq.Lyapunovsum}.

Consequently we have that $\lambda^u_i(f,\Leb)= h_{\Leb}(f,\cF^{u,f}_i)=\lambda^u_i,\ \ \forall 1\leq i\leq k$. The same argument works for the corresponding one dimensional stable foliations and for $f^{-1}$. This reduces the proof to Theorem \ref{main.higherdimension}.

Now assume that the second condition is satisfied. This means that for any $\mu_i$, ergodic Gibbs $e$-state for the foliation $\cF^{u,f}_i$, we have $\lambda^u_i(f,\mu_i)=\lambda^u_i$ (and a similar condition for the stable foliation holds). Since the Lyapunov exponents of $L$ are the same for any invariant measure, the condition (B4) of Theorem \ref{main.technique} is satisfied for $(f,\cF^{u,f}_1,\mu_1)$ and $(L,\cF^{u,L}_1,h_*\mu_1)$, so we obtain that condition (B5') is also satisfied so $h$ is $C^{1+\delta}$ along $\cF^{u,f}_1$ on the support of $\mu_1$. Also $h_*\mu_1$ is absolutely continuous along $\cF^{u,L}_1$, and since $L$ is irreducible we get that $h_*\mu_1$ must be the volume on $\TT^d$, so it has full support. By induction on $i\in\{1,2,\dots\}$ and using Lemma \ref{l.inductioncenterleaf} in order to obtain that $h$ preserves the one dimensional foliations, we conclude again that $h$ is $C^{1+\delta}$ along each unstable subfoliation. A similar argument for the stable foliation and the Journ\'e regularity lemma will then give us the desired conclusion.

\end{proof}

\subsection{Proof of Corollary~\ref{maincor.Katokexample}}

We need the following version of Journ\'e regularity lemma for foliations, which can be found in \cite{DX}[Lemma 4.3], and follows directly from the claims in \cite{PSW} (see also~\cite{BX}).

\begin{lemma}\label{l.Jourfoliation}

Suppose that $\cW,\cF$ and $\cL$ are foliations of the manifold $M$, $\cW$ and $\cF$ subfoliate $\cL$, and $\cW$ is transversal to $\cF$ (within $\cL$). Let $\cH=\cH^{\cW}$ be the holonomy within the leaves of $\cL$, along the leaves of $\cW$, between the leaves of $\cF$. Let $r>1$, $r\notin\mathbb N$. If the foliations $\cF,\cW,\cL$ have uniformly $C^{r}$ leaves, and $\cH$ is uniformly $C^{r}$, then $\cW$ is a $C^{r}$ foliation within $\cL$.

\end{lemma}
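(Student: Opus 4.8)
The statement is local inside each leaf of $\cL$, so I would first fix a leaf $L_0$ of $\cL$ --- a manifold with uniformly $C^r$ charts --- and regard $\cW$ and $\cF$ as a transverse pair of foliations of $L_0$, of complementary dimension, with uniformly $C^r$ leaves, such that every $\cW$-holonomy between two $\cF$-plaques is a uniformly $C^r$ diffeomorphism (the inverse of such a holonomy being the reverse holonomy, which is again $C^r$ by hypothesis). Fix $p\in L_0$ and a small product box $U\subset L_0$, with $W_0=\cW(p)\cap U$ and $F_0=\cF(p)\cap U$, and introduce the map
\[
\pi:U\to F_0,\qquad \pi(q)=\cW(q)\cap F_0 ,
\]
the projection of $U$ onto the $\cF$-plaque $F_0$ along $\cW$. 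It is well defined and continuous by the local product structure of $(\cW,\cF)$, it is constant on every $\cW$-plaque of $U$, and $\pi^{-1}(f)$ is exactly the $\cW$-plaque through $f$; so the fibres of $\pi$ are precisely the $\cW$-plaques. The plan is to prove that $\pi$ is a $C^r$ submersion with kernel distribution $T\cW$, and then to obtain a $C^r$ foliation chart for $\cW$ from $\pi$ via the submersion theorem.

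To see that $\pi$ is $C^r$, compose it with a $C^r$ chart of $F_0$ so that it becomes an $\mathbb{R}^m$-valued function, $m=\dim\cF$. It is uniformly $C^r$ --- indeed constant --- along the leaves of the $\cW$-plaque foliation of $U$; and along each $\cF$-plaque $\cF(w)$ the restriction $\pi|_{\cF(w)}$ is exactly the $\cW$-holonomy from $\cF(w)$ to $F_0$, hence uniformly $C^r$ by hypothesis. Since the $\cW$-plaques and the $\cF$-plaques are transverse foliations of $U$ with uniformly $C^r$ leaves and $r\notin\mathbb{N}$, the version of Journé's regularity lemma for functions (which follows from Lemma~\ref{l.Jour} applied to the graph of $\pi$) gives $\pi\in C^r(U)$.

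Knowing $\pi$ is $C^r$, I would then differentiate: $D\pi(q)$ kills $T_q\cW(q)$ because $\pi$ is constant along $\cW$-plaques, and its restriction to $T_q\cF(q)$ is the derivative of the $\cW$-holonomy $\cF(q)\to F_0$, a linear isomorphism onto $T_{\pi(q)}F_0$ since the holonomies are $C^r$ diffeomorphisms. As $T_qL_0=T_q\cW(q)\oplus T_q\cF(q)$ by transversality and $\dim\cF(q)=\dim F_0$, we get that $D\pi(q)$ is onto with kernel exactly $T_q\cW(q)$; hence $\pi$ is a $C^r$ submersion whose kernel distribution is $T\cW$. By the $C^r$ submersion (constant-rank) theorem, every point of $U$ has a $C^r$ chart of $L_0$ in which $\pi$ is a linear coordinate projection, so the level sets of $\pi$ --- the $\cW$-plaques --- are the plaques of a $C^r$ foliation chart. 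Because $U$, the map $\pi$, and all the estimates above can be chosen to depend uniformly on the leaf of $\cL$, this shows that $\cW$ is a uniformly $C^r$ foliation within $\cL$.

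The one genuine difficulty is the step that turns the leafwise regularity of $\pi$ (constant along $\cW$, holonomy-regular along $\cF$) into joint $C^r$ regularity; that is exactly Journé's lemma, and it is why the hypothesis asks $r\notin\mathbb{N}$. Everything else --- the local product structure, the derivative computation, the passage from a submersion to a foliation chart --- is routine. I would also stress that $\cF$ is used here only as an auxiliary transverse foliation feeding Journé's lemma: we never need $\cF$ itself to be a smooth foliation, only that its leaves are $C^r$ and that the $\cW$-holonomy between them is $C^r$, so no straightening of $\cF$ is attempted.
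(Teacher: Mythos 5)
Your argument is correct, and in fact it supplies a proof where the paper gives none: in the text this lemma is simply quoted from \cite{DX}[Lemma 4.3] (and the claims of \cite{PSW}), so there is no in-paper proof to compare with. What you write is essentially the standard argument behind that citation: fix a leaf of $\cL$, project a product box onto a fixed $\cF$-plaque along $\cW$, observe that this projection is constant along $\cW$-plaques and coincides with the $\cW$-holonomy along $\cF$-plaques, apply Journ\'e's regularity theorem for functions to get joint $C^r$ regularity, and then straighten the resulting $C^r$ submersion to produce uniform $C^r$ foliation charts for $\cW$. Two small points. First, your parenthetical derivation of the function version of Journ\'e from Lemma~\ref{l.Jour} ``applied to the graph of $\pi$'' is backwards: the graph is not known to be a $C^r$ submanifold before you know $\pi$ is $C^r$, so Lemma~\ref{l.Jour} cannot be applied to it; but this is only a citation issue, since the function statement is precisely Journ\'e's original theorem in \cite{J}, from which the map version is derived, so you should invoke \cite{J} directly. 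Second, you tacitly use that $T\cL=T\cW\oplus T\cF$ (complementary dimensions within $\cL$) and the resulting local product structure; this is implicit in the lemma's hypothesis that the holonomy along $\cW$ between leaves of $\cF$ is well defined, and it holds in all the paper's applications, but it is worth stating, since both the well-definedness and continuity of $\pi$ and the surjectivity of $D\pi$ with kernel exactly $T\cW$ rest on it. With these remarks, the leafwise regularity, the Journ\'e step (here $r\notin\mathbb N$ is used), the rank computation, and the uniformity of all constants across leaves of $\cL$ are handled correctly.
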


\begin{proof}[Proof of Corollary~\ref{maincor.Katokexample}:]

We first assume that all the exponents of $\gamma_t$ coincide with the Lyapunov exponents of $L$. To apply the Journ\'e regularity lemma for foliations, we take $\cL$ to be the whole manifold $\TT^d\times I$ (it has only one leaf), $\cF=\{\TT^d\times \{t\}\}_{t\in I}$, and $\cW$ to be the center foliation. Then, by Hirsch, Pugh and Shub, the leaves of $\cW$ are uniformly $C^{1+\vep}$ ($\cL$ and $\cF$ are $C^{\infty}$). Also $\cW$ and $\cF$ are transversal and subfoliate $\cL$.

By the invariance of the center foliation under iteration of $f$, the holonomy map $\cH^c_{t_1,t_2}$ induced by the center foliation $\cW$ between the $\cF$ leaves $\TT^d\times \{t_1\}$ and $\TT^d\times \{t_2\}$ is the conjugacy between the Anosov diffeomorphisms $\gamma_{t_1}$ and $\gamma_{t_2}$. Thus

$$
\cH^c_{t_1,t_2}=h_{\gamma_{t_2}}^{-1}\circ h_{\gamma_{t_1}},
$$
where $h_{\gamma_t}$ denotes the conjugacy between $\gamma_t$ and $L$.

As a consequence of Theorem~\ref{main.higherdimension}, $\gamma_t$ is $C^{1+\vep}$ for any $t\in I$. Thus the holonomy map $\cH^c_{t_1,t_2}$ is uniformly $C^{1+\vep}$. Then, by Journ\'e regularity lemma for foliation mentioned above, the center foliation $\cW$ is $C^{1+\vep}$.

Now suppose that if $t_1\neq t_2$, then $\gamma_{t_1}$ and $\gamma_{t_2}$ have the sums of the unstable Lyapunov exponents different. Then the conjugacy $\cH^c_{t_1,t_2}$ between $\gamma_{t_1}$ and $\gamma_{t_2}$ cannot be smooth. Furthermore $(\cH^c_{t_1,t_2})_*(\Leb)$ cannot be the volume because this would contradict the Pesin formula, so $(\cH^c_{t_1,t_2})_*(\Leb)$ must be a measure which is singular with respect to the volume on  $\TT^d\times\{t_2\}$.

We now take $\Gamma_t$ to be the set of regular points of the map $\gamma_t$ for $t\in I$. By Birkhoff theorem, $\Leb_{\TT^d\times\{ t\}}(\Gamma_t)=1$. Let $\Gamma=\cup_t \Gamma_t$, then by Fubini theorem, $\Gamma$ has full volume in the manifold $\TT^d\times I$.

We claim that $\Gamma$ intersects every center leaf in at most one point. Suppose by contradiction that there is a center leaf which intersects $\Gamma_{t_1}$ and $\Gamma_{t_2}$. Then there are two points $(x_1, t_1)$ and $(x_2,t_2)$ such that $\cH^c_{t_1,t_2}(x_1,t_1)=(x_2,t_2)$, and the orbit of $(x_1,t_1)$ is also mapped to the orbit of $(x_2,t_2)$ by the map $\cH^c_{t_1,t_2}$. Thus,
$$
(\cH^c_{t_1,t_2})_*\frac 1n(\sum_{i=0}^{n-1} \delta_{\gamma_{t_1}^i(x_1,t_1)})=\frac 1n\sum_{i=0}^{n-1} \delta_{\gamma_{t_2}^i(x_2,t_2)}.
$$
Take $n\to \infty$, and observe that $(x_1,t_1)$ is the regular point of $\gamma_{t_1}$ for the Lebesgue measure and $(x_2,t_2)$ is the regular point of $\gamma_{t_2}$ for the Lebesgue measure, we have $(\cH^c_{t_1,t_2})_*\Leb=\Leb$. This is  a contradiction, so the proof is finished.

\end{proof}

\subsection{Proof of Corollary~\ref{maincor.generalconjugation}}

The proof of Theorem~\ref{maincor.generalconjugation} is the same with the proof of Theorem~\ref{main.higherdimension}, one only needs to replace $L$ by $g$ and observe that from (A3) of Proposition~\ref{p.Gibbsestate} and from our hypothesis, we get that for any ergodic measure $\mu\in \Gibb^{u,i}(f)$ $(i=1,\cdots, k)$, and any ergodic measure $\mu^\prime \in \Gibb^{s,j}(f)$ $(j=1,\cdots l)$, $\supp(\mu)=\supp(\mu^\prime)$ coincides to the ambient manifold. So we omit the proof here.

\section{Proof of Theorem~\ref{main.technique} and Corollary~\ref{maincor.periodicpoints} \label{s.technique}}

In this section we provide the proof of the main technical result of the paper, Theorem~\ref{main.technique}, and the Corollary~\ref{maincor.periodicpoints}. We remark that the proof does not depend on Livshitz theorem, and may have potentially other applications for partially hyperbolic diffeomorphisms without (many) periodic points.

\subsection{B1 implies B2}\label{B1B2}

The idea of the proof is the following. If the conjugacy preserves 2 invariant expanding foliations and two corresponding Gibbs expanding measures, then it must also preserve the disintegrations and the quotient measures for corresponding foliation boxes. Uniform bounds for the regularity of the densities of the disintegrations would give us the desired conclusion.

\begin{proof}

Let $B^f$ be a foliation box of the foliation $\cF^f$, then $B^g=h(B^f)$ is a foliation box for the foliation $\cF^g$. Let $D^f$ be a transverse disk of the foliation chart $B^f$, then we have the local projection map $\pi^f: B^f \to D^f$ along the local $\cF^f$ leaves. The image $D^g=h(D^f)$ is a topological transverse disk of the foliation box $B^g$, and again we have the local projection map $\pi^g: B^g \to D^g$ along the local $\cF^g$ leaves.

We suppose that $\mu(B^f)>0$, then we also have $\nu(B^g)>0$. Denote by $\mu_B$ and $\nu_B$ the normalized restrictions of $\mu$ to $B^f$, respectively $\nu$ to $B^g$. We will have that $\nu_B=h_*(\mu_B)$. Then we can write the disintegration of $\mu_B$ along the foliation $\cF^f$ by:

\begin{equation}\label{eq.disintegration}
d\mu_B(y)=\int_{D^f} d\mu_{x}(y) d\overline{\mu}(x)
\end{equation}
where $\mu_{x}$ is the disintegration of $\mu_B$ along the plaque $\cF^f(x)$ for $\overline{\mu}$ almost every $x\in D^f$, and $\overline{\mu}=\pi^f_{*}\mu_B$ is the quotient measure.

We can also write the disintegration of $\nu_B$ along the foliation $\cF^g$ by:

$$
d\nu_B(y)=\int_{D^g} d\nu_{x}(y) d\overline{\nu}(x)
$$
where $\nu_{x}$ is the disintegration of $\nu_B$ along the plaque $\cF^g(x)$ for $\overline{\nu}$ almost every $x\in D^g$, and $\overline{\nu}=\pi^g_{*}\nu_B$ is the quotient measure.

By the uniqueness of disintegrations, we know that

\begin{lemma}\label{l.uniquenessofdisintegration}

$h_*(\overline{\mu})=\overline{\nu}$ and $h_*(\mu_{x})=\nu_{h(x)}$ for $\overline{\mu}$ almost every $x\in D^f$.

\end{lemma}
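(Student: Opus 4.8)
The plan is to deduce the lemma from the uniqueness clause of Rokhlin's disintegration theorem \cite{Rok49}, after transporting the conditional system of $\mu_B$ by $h$. First I would record the elementary but crucial fact that $h$ intertwines the two projections. Since $h$ is a homeomorphism carrying the local leaves of $\cF^f$ onto the local leaves of $\cF^g$, and by construction $h(D^f)=D^g$, for every $y\in B^f$ the point $h(\pi^f(y))$ is the unique intersection of $D^g$ with the local $\cF^g$-leaf through $h(y)$, i.e. $\pi^g(h(y))$. Hence
$$\pi^g\circ h=h\circ\pi^f \qquad\text{on }B^f.$$
(Here one also notes that $D^g$ is a genuine cross-section of $B^g$, meeting each plaque of $\cF^g\mid_{B^g}$ in exactly one point, again because $h$ is a homeomorphism with $B^g=h(B^f)$.) Pushing forward $\mu_B$ and using $\nu_B=h_*\mu_B$ then gives at once
$$\overline\nu=\pi^g_*\nu_B=(h\circ\pi^f)_*\mu_B=h_*\overline\mu,$$
which is the first assertion.

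Next I would transport the whole conditional system. Applying $h_*$ to the disintegration identity \eqref{eq.disintegration} and changing the base variable via $x\mapsto h(x)$, which is legitimate because $h_*\overline\mu=\overline\nu$, yields, for every bounded Borel $\phi$ on $B^g$,
$$\int\phi\, d\nu_B=\int_{D^g}\Bigl(\int\phi\, d\bigl(h_*\mu_{h^{-1}(x)}\bigr)\Bigr)\, d\overline\nu(x).$$
Thus $\{h_*\mu_{h^{-1}(x)}\}_{x\in D^g}$ is a candidate system of conditional measures of $\nu_B$ with respect to $\pi^g$ (equivalently, along the plaques of $\cF^g\mid_{B^g}$). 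To invoke uniqueness I must check the two defining properties: that $x\mapsto h_*\mu_{h^{-1}(x)}$ is $\overline\nu$-measurable, which is immediate from measurability of $x\mapsto\mu_x$ together with continuity of $h$; and that for $\overline\nu$-almost every $x$ the measure $h_*\mu_{h^{-1}(x)}$ is a probability supported on the plaque $\cF^g(x)$. The latter holds because $\mu_{h^{-1}(x)}$ is a probability supported on $\cF^f(h^{-1}(x))$ and $h$ maps that plaque homeomorphically onto $\cF^g(x)$. By the uniqueness clause of Rokhlin's theorem we then get $h_*\mu_{h^{-1}(x)}=\nu_x$ for $\overline\nu$-a.e. $x$, i.e. $h_*\mu_x=\nu_{h(x)}$ for $\overline\mu$-a.e. $x$.

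I expect the only genuinely delicate point to be the bookkeeping behind the support statement: one must make sure that $h$ sends \emph{plaques} to \emph{plaques} (not merely leaves to leaves) and that $D^g=h(D^f)$ is an honest cross-section, so that all the objects $\pi^g$, $\overline\nu$, $\nu_x$ involved in the conclusion are the ones already defined in the text. Both facts follow purely from $h$ being a homeomorphism with $B^g=h(B^f)$ — connected components of leaf $\cap$ box go to connected components — so there is no analytic content here; the remainder is the routine change-of-variables computation above plus the uniqueness of disintegrations.
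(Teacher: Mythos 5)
Your proof is correct and follows exactly the route the paper intends: the paper simply asserts the lemma ``by the uniqueness of disintegrations,'' and your argument --- the intertwining relation $\pi^g\circ h=h\circ\pi^f$ on $B^f$, the push-forward of \eqref{eq.disintegration}, and the uniqueness clause of Rokhlin's theorem \cite{Rok49} --- is precisely the standard justification being invoked. The details you flag (plaques go to plaques, $D^g=h(D^f)$ is a genuine cross-section) are the right ones to check and you handle them correctly.
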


As shown by A4 of Proposition~\ref{p.Gibbsestate}, for $\overline{\mu}$ almost every $x$,

\begin{equation}\label{eq.leafmeasure}
d\mu_{x}=\rho^f_xd\Leb\mid_{\cF^f(x)},
\end{equation}
where $\rho^f_x$ is given by \eqref{eq.density}, and is continuous on $B^f$ and uniformly $C^{r-1}$ along the leaves. Although the conditional measure is only defined on $\mu$ almost all leaves, we can extend the definition of $\mu_{x}$ to the support of $\overline{\mu}$ using the uniform continuity of the density $\rho^f$. The previous discussion is equivalent to say that for {\it every} point $x\in supp(\mu\cap B^f)$, $\mu_{x}$ is well-defined on the leaf $\cF^f(x)$, and satisfies equation \eqref{eq.leafmeasure}, where $\rho^f_x$ is given by \eqref{eq.density}, and is uniformly $C^{r-1}$ along the leaves.

Since we assume $\nu\in \Gibb^e(g,\cF^g)$, then by a similar argument as above, for {\it every} point $x\in supp(\nu)\cap B^g$, $\nu_{x}$ is well-defined on the leaf $\cF^g(x)$, and

$$
d\nu_{x}=\rho^g_xd\Leb\mid_{\cF^g(x)},
$$
where $\rho^g_x$ is given by \eqref{eq.density}, and is uniformly $C^{r-1}$ along the leaves.

We saw that $h_*(\mu_{x})=\nu_{h(x)}$ holds for $\overline{\mu}$ almost every $x\in D^f$, or equivalently $\mu$ almost every $x\in B^f$. Since $\rho^f_x$ and $\rho^g_y$ are continuous with respect to $x\in B^f$ and $y\in B^g$, and the plaques of $\cF^f(x)$ and $\cF^g(x)$ are uniformly $C^r$ and vary continuously in the $C^1$ topology with respect to $x$ (meaning that the plaques are images of $C^1$ functions $\sigma_x:B(0,1)\subset\mathbb R^{\dim(\cF)}\rightarrow M$, and $x\mapsto\sigma_x$ is continuous for the $C^1$ topology on the right-hand side), we get that $\mu_x$ and $\nu_x$ are continuous with respect to the point $x$. Because $h$ is continuous, we conclude that $h_*(\mu_{x})=\nu_{h(x)}$ for {\it every} point $x\in B^f\cap\supp\mu$.

For any $x\in B^f\cap\supp\mu$, we claim that $h\mid_{\cF^f(x)}$ is absolutely continuous, or $h_*(\Leb\mid_{\cF^f(x)})$ and $\Leb\mid_{\cF^g(h(x))}$ are equivalent. This follows from the following equivalences, using that $h_*(\mu_{x})=\nu_{h(x)}$, $\mu_x$ is equivalent to $\Leb\mid_{\cF^f(x)}$, and $\nu_{h(x)}$ is equivalent to $\Leb\mid_{\cF^g(h(x))}$:
\begin{eqnarray*}
\Leb\mid_{\cF^g(h(x))}(A) =0&\iff& \nu_{h(x)}(A)=0\iff h_*(\mu_{x})(A)=0\\
&\iff& \mu_x(h^{-1}(A))=0\iff \Leb\mid_{\cF^f(x)}(h^{-1}(A))=0
\end{eqnarray*} 
We will compute the Jacobian of $h\mid_{\cF^f(x)}$, or the Radon-Nicodym derivative of $\Leb\mid_{\cF^g(h(x))}$ with respect to $h_*(\Leb\mid_{\cF^f(x)})$. Using that $d\Leb\mid_{\cF^g(h(x))}=\frac 1{\rho_{h(x)}^g}d\nu_{h(x)}$, $h_*(\mu_{x})=\nu_{h(x)}$, and $d\mu_{x}=\rho^f_xd\Leb\mid_{\cF^f(x)}$, we obtain:
\begin{eqnarray*}
d\Leb\mid_{\cF^g(h(x))}&=& \frac 1{\rho^g_{h(x)}}d\nu_{h(x)}\\
&=& \frac 1{\rho^g_{h(x)}}dh_*\mu_{x}\\
&=& \frac 1{\rho^g_{h(x)}}h_*\left(\rho^f_xd\Leb\mid_{\cF^f(x)}\right)\\
&=& \frac {\rho^f_x\circ h^{-1}}{\rho^g_{h(x)}}dh_*\left(\Leb\mid_{\cF^f(x)}\right).
\end{eqnarray*} 
In conclusion we have that the Jacobian of $h\mid_{\cF^f(x)}$ is
\begin{equation}\label{eq.jacobian}
J(h\mid_{\cF^f(x)})(z)=\frac{d\Leb\mid_{\cF^g(h(x))}}{dh_*\left(\Leb\mid_{\cF^f(x)}\right)}(h(z))=\frac {\rho^f_x(z)}{\rho^g_{h(x)}(h(z))},
\end{equation}
for every $x\in B^f\cap\supp\mu$ and every $z$ in the plaque $\cF^f(x)$. By the continuity of $\rho^f,\rho^g$ and $h$ we see that $J(h\mid_{\cF^f})$ is also continuous on $B^f\cap\supp\mu$, and since $J(h\mid_{\cF^f})$ is independent of the election of the foliation box $B^f$, we get that $J(h\mid_{\cF^f})$ is continuous on $\supp\mu$. Since the support of $\mu$ is compact, $J(h\mid_{\cF^f})$ must be bounded away from zero and infinity.

\end{proof}

\begin{remark}\label{rk.jacobian}

We remark that we obtained that the map $h\mid_{\cF^f(x)}$ is absolutely continuous, and we will see that it is in fact differentiable if the dimension of $\cF$ is 1. However $h\mid_{\cF^f(x)}$ may not be differentiable if the dimension of $\cF$ is strictly greater than 1, so in this case the Jacobian means in fact the Radon-Nicodym derivative mentioned above.

\end{remark}

\subsection{(B2) implies (B3)}

It is enough to observe that $h\circ f^n=g^n\circ h$ is absolutely continuos when restricted to leaves of $\cF^f$ in the support of $\mu$, as a composition of an absolutely continuous function and a differentiable function. Computing the Jacobian along $\cF^f$ for some $x\in\supp\mu$ we get
$$
J(h\mid_{\cF^f})(f^n(x))\cdot\det (Df^n\mid_{T_x\cF^f(x)})=\det (Dg^n\mid_{T_{h(x)}\cF^g(h(x))})\cdot J(h\mid_{\cF^f})(x),
$$
or
$$
\frac{\det (Df^n\mid_{T_x\cF^f(x)})}{\det (Dg^n\mid_{T_{h(x)}\cF^g(h(x))})}=\frac{ J(h\mid_{\cF^f})(x)}{J(h\mid_{\cF^f})(f^n(x))}.
$$

Since $J(h\mid_{\cF^f})$ is uniformly bounded away from zero and infinity on the support of $\mu$, the conclusion follows.

\subsection{(B3) implies (B4)}

This is a direct consequence of Oseledets theorem, Birkhoff ergodic theorem, and the fact that the logarithm of the determinant restricted to the tangent bundle of the foliation is an additive cocycle. It is sufficient to prove that
$$
\int \log (\det(Df\mid_{T_x\cF^f}))d\mu(x)=\int \log (\det(Dg\mid_{T_x\cF^g}))d\nu(x).
$$
Since

$$
\int \log (\det(Df^n\mid_{T_x\cF^f}))d\mu(x)=n\int \log (\det(Df\mid_{T_x\cF^f}))d\mu(x)
$$
we get
\begin{equation}
\begin{aligned}
\int \log (\det(Df\mid_{T_x\cF^f}))d\mu(x)&=&\lim_{n\to \infty} \frac{1}{n}\int \log (\det(Df^n\mid_{T_x\cF^f}))d\mu(x)\\
&=&\lim_{n\to \infty} \frac{1}{n}\int \log (\det(Dg^n\mid_{T_{h(x)}\cF^g}))d\mu(x)\\
&=&\lim_{n\to \infty} \frac{1}{n}\int \log (\det(Dg^n\mid_{T_{y}\cF^g}))d\nu(y)
\end{aligned}
\end{equation}

In the above relations we used the hypothesis (B3) to conclude that the two limits are equal, and the fact that $\nu=h_*(\mu)$ in order to change the variable.

The proof of (B3)$\Longrightarrow$ (B4) is complete.

\subsection{(B4) implies (B1)}

\begin{proof}

Let $\xi^f$ be an subordinated partition of $(\cF^f,f,\mu)$. Because $h_*(\mu)=\nu$ and $h(\cF^f)=\cF^g$, we obtain that $h(\xi^f)$ is also a subordinated partition of $(\cF^g,g,\nu)$. Moreover, $h_\mu(f,\cF^f)=h_\nu(g,\cF^g)$.

Because $\mu\in \Gibb^e(f,\cF^f)$, by Theorem~\ref{l.criterionofgibbs} we have that $h_\mu(f,\cF^f)=\lambda^{\cF^f}(f,\mu)$. Also by the hypothesis of (B4), $\lambda^{\cF^f}(f,\mu)=\lambda^{\cF^g}(g,\nu)$. Thus, we have
$$
h_\nu(g,\cF^g)=\lambda^{\cF^g}(g,\nu).
$$
By Theorem~\ref{l.criterionofgibbs} again, $\nu\in \Gibb^e(g,\cF^g)$. The proof is complete.
\end{proof}

\subsection{1-dimensional expanding foliation}

In this subsection, we assume that $\dim(\cF^f)=\dim(\cF^g)=1$. We are going to show that (B5$^\prime$) is equivalent to (B1), (B2), (B3) and (B4). Indeed, we are going to show (B5$^\prime$) implies (B2), and (B1) implies (B5$^\prime$).

\begin{proof}[Proof of (B5$^\prime$) implies (B2):]
This implication is straightforward, since $C^r$ regularity implies absolute continuity with bounded Jacobian. This implication holds in any dimension.

\end{proof}

\begin{proof}[Proof of (B1) implies (B5$^\prime$):]
Recall from the subsection \ref{B1B2} that we have the support of the measure $\mu$ covered by finitely many foliation boxes $B_i^f$, $i\in\{1,\dots k\}$, and the disintegration of the measure $\mu_i=\mu\mid_{B_i^f}$ is
$$
d\mu_i(y)=\int_{D^f_i} d\mu_{i,x}(y) d\overline{\mu}_i(x),
$$
where $\mu_{i,x}(z)=\rho^f_x(z)d\Leb\mid_{\cF^f_i(x)}(z)$, for {\it every} $x\in B^f_i$, while $\rho^f$ is uniformly $C^{r-1}$ along the leaves. Also the disintegration of the measure $\nu_i=\nu\mid_{B_i^g}$ is
$$
d\nu_i(y)=\int_{D^g_i} d\nu_{i,x}(y) d\overline{\nu}_i(x),
$$
where $\nu_{i,x}(z)=\rho^g_x(z)d\Leb\mid_{\cF^g_i(x)}(z)$, for {\it every} $x\in B^g_i$, while $\rho^g$ is uniformly $C^{r-1}$ along the leaves.

Lemma \ref{l.uniquenessofdisintegration} tells us that $h_*(\mu_{i,x})=\nu_{i,h(x)}$ for $\overline{\mu}_i$ almost every $x$, and again by continuity it will hold for {\it every} $x\in\supp(\mu_i)$.

Recall that we obtained in subsection \ref{B1B2}, equation \eqref{eq.jacobian} that the Jacobian of $h\mid_{\cF^f(x)}$ is
$$
J(h\mid_{\cF^f(x)})(z)=\frac {\rho^f_x(z)}{\rho^g_{h(x)}(h(z))}.
$$

The following is a well-known argument used to obtain the regularity along one dimensional foliations, and it was used extensively in dynamics. If the leaves of $\cF$ are one-dimensional, then the Lebesgue measure on the plaques of $\cF^f$ and $\cF^g$ is exactly the arc length, and the Jacobian of $h\mid_{\cF^f(x)}$ is exactly the derivative of $h\mid_{\cF^f(x)}$, when both $\cF^f(x)$ and $\cF^g(h(x))$ are parametrized by the arc length, i.e.
$$
\left(h\mid_{\cF^f(x)}\right)'(z)=\frac {\rho^f_x(z)}{\rho^g_{h(x)}(h\mid_{\cF^f(x)}(z))}.
$$
Because the leaves and the densities $\rho^f_x$ and $\rho^g_{h(x)}$ are uniformly $C^{r-1}$, a standard induction argument implies that $h\mid_{\cF^f(x)}$ is uniformly $C^r$.

Since the argument works for all $i\in\{1,\dots k\}$, the proof is finished.

\end{proof}

\begin{remark}\label{rk.1D}

We remark that the assumption that the dimension of the foliations is one is fundamental for the last step of the proof. Without this assumption the implication ``(B1)$\Rightarrow$(B5')'' may not be true.

\end{remark}

\subsection{Proof of Corollary~\ref{maincor.periodicpoints}}

First suppose that the hypothesis of Theorem~\ref{main.technique} is satisfied, then by (B3) of Theorem~\ref{main.technique}, there exists $K>0$ such that for every periodic point $p\in \supp(\mu)$ with period $\pi(p)$, we have
$$
\frac{1}{K}<\frac{\det(Df^{n\pi(p)}\mid_{T_p\cF^f})}{\det(Dg^{n\pi(p)}\mid_{T_{h(p)}\cF^g})}<K.
$$
Thus
$$
\frac{1}{\sqrt[n]{K}}<\frac{\det(Df^{\pi(p)}\mid_{T_p\cF^f})}{\det(Dg^{\pi(p)}\mid_{T_{h(p)}\cF^g})}<\sqrt[n]{K}.
$$
Letting $n\to \infty$, we conclude that
$$
\det(Df^{\pi(p)}\mid_{T_p\cF^f})=\det(Dg^{\pi(p)}\mid_{T_{h(p)}\cF^g}).
$$

Conversely, suppose now that the above equality holds for a sequence of periodic points $p_n$, which are not necessary contained in $\supp(\mu)$, and $\mu$ belongs to the closure of the convex set generated by $\mu_n$, where $\mu_n=\sum_{i=0}^{\pi(p_n)-1}\delta_{f^i(p_n)}$.

Write $\nu_n=\sum_{i=0}^{\pi(p_n)-1}\delta_{g^i(h(p_n))}$, then $h_*(\mu_n)=\nu_n$. Because $h_*(\mu)=\nu$, then $\nu$ belongs to the closure of the convex set generated by $\nu_n$.

By the assumption, we have that for every $n\in\mathbb N$
$$
\int \log(\det(Df\mid_{T\cF^f_x}))d\mu_n(x)=\int \log(\det(Dg\mid_{T\cF^g_x}))d\nu_n(x).
$$
From the linearity and the continuity of the integral of a continuous function with respect to the measure, we obtain

$$
\int \log(\det(Df\mid_{T\cF^f_x}))d\mu(x)=\int \log(\det(Dg\mid_{T\cF^g_x}))d\nu(x).
$$

Then the condition (B4) of Theorem~\ref{main.technique} is satisfied, and consequently we have that $\nu\in \Gibb^e(g,\cF^g)$. The proof is finished.

\section{A version of the invariance principle\label{s.invarianceprinciple}}

In this section we will introduce a version of the invariance principle, which is a relativized variation principle for partial entropy along expanding foliations, developed by Tahzibi-Yang \cite{TY} and Viana-Yang \cite{VY3}. There are many works on various versions of invariant principles, for more details see \cite{Led,AV,ASV,AVW1,AVW2,TY}. In general one assumes that the central Lyapunov exponents vanish, and obtains that the disintegrations of the measure along the central leaves are invariant under the stable and unstable holonomies, and under 'good' conditions one even gets that the disintegrations along the center leaves are also continuous.

We are interested here about the invariance of the disintegrations along (weak) stable and unstable leaves, under the holonomy along the center foliation. Throughout this section, let $M$ and $\overline M$ be smooth Riemannian manifolds, $M$ being a continuous fiber bundle over $\overline M$ with compact fiber, $f\in \Diff^1(M)$ be a diffeomorphism which fibers over $\overline f\in\Diff^1(\overline M)$. If $\pi:M\rightarrow\overline M$ is the fiber projection, this says that $\pi$ is a semiconjugacy between $f$ and $\overline f$, $h\circ f=\overline f\circ h$, and $\pi^{-1}(y)$ is homeomorphic to the compact fiber for every $y\in\overline M$. Assume that there exist $\cF$ and $\overline\cF$ expanding foliations with $C^1$ leaves for $f$ respectively $\overline f$, such that $\pi(\cF)=\overline\cF$, and $\pi\mid_{\cF(x)}$ is a homeomorphism between $\cF(x)$ and $\overline\cF(\pi(x))$, for every $x\in M$.

Under this condition one can define the 'center holonomy', or the holonomy along the fibers, inside $\pi^{-1}(\overline\cF(z))$. If $x,y\in\pi^{-1}(\overline\cF(z))$ we define the center holonomy between $\cF(x)$ and $\cF(y)$ to be $\cH^c_{x,y}=\left(\pi\mid_{\cF(y)}\right)^{-1}\circ\pi\mid_{\cF(x)}$, i.e. $v$ and $\cH^c_{x,y}(v)$ belong to the same fiber.

Let $\nu$ be an invariant measure for $\overline f$, and $\mu$ an invariant measure (not necessarily ergodic) for $f$ such that $\pi_*(\mu)=\nu$. Given $\overline \xi$ a subordinated partition for the expanding foliation $\overline\cF$ for $(\overline f,\nu)$ with small enough pieces, it can be lifted to a subordinated partition $\xi$ for the expanding foliation $\cF$ for $(f,\mu)$, by taking the intersection of local leaves of $\cF$ with preimages under $\pi$ of pieces of $\overline\xi$. Let $\mu_x$ and $\nu_y$ be the conditional measures of $\mu$ respectively $\nu$, along the partitions $\xi$ respectively $\overline\xi$. Now we will define the $c$-invariance of $\mu$.

\begin{definition}

We say that {\it $\mu$ is $c$-invariant for $\cF$} if for any subordinated partition $\overline\xi$ of $\overline\cF$, and the induced subordinated partition $\xi$ of $\cF$, the center holonomy preserves the conditional measures of $\mu$ along $\xi$, i.e. there exists a full measure set $\Gamma^1\subset M$ such that for every $x,y\in\Gamma^1$ with $y\in\pi^{-1}(\overline\cF(\pi(x)))$, we have $\cH^c_{x,y*}\mu_x=\mu_y$.

Equivalently, there exists a full measure set $\Gamma\subset M$ such that for any $x\in\Gamma$,

\begin{equation}\label{eq.cinvariance}
\pi_*\mu_x=\nu_{\pi(x)}.
\end{equation}

\end{definition}

Our main tool is the following criterion which was used in Tahzibi-Yang \cite{TY} and Viana-Yang \cite{VY3} in order to obtain the $c$-invariance.

\begin{theorem}\label{p.partialentropy}

Let $\mu$ be an invariant measure of $f$, and $\nu=\pi_*(\mu)$ be an invariant measure of $\overline{f}$. Then
$$
h_\mu(f,\cF^u)\leq h_\nu(\overline{f},\overline\cF),
$$
and the equality holds if and only if $\mu$ is $c$ invariant for $\cF$.

\end{theorem}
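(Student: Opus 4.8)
The plan is to run the relativized variational principle argument of Tahzibi--Yang \cite{TY} and Viana--Yang \cite{VY3}, reducing the statement to a leafwise comparison of conditional entropies of countable partitions. Fix a subordinated partition $\overline\xi$ for $(\overline\cF,\overline f,\nu)$ with pieces small enough that it lifts, as in the construction above, to a subordinated partition $\xi$ for $(\cF,f,\mu)$ with $\xi(x)=\cF_{loc}(x)\cap\pi^{-1}(\overline\xi(\pi(x)))$. The first observation is that, since $\pi|_{\cF(x)}$ is a homeomorphism onto $\overline\cF(\pi(x))$ and $\pi\circ f=\overline f\circ\pi$, the restriction $\pi_A:=\pi|_A$ of $\pi$ to each $\xi$-atom $A$ is a bijection onto the $\overline\xi$-atom $\overline A:=\pi(A)$, and for every $n\geq 0$ it carries the countable partition $(f^{-n}\xi)|_A$ onto $(\overline f^{-n}\overline\xi)|_{\overline A}$. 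Hence entropy is transported without loss: $H_{\mu_A}((f^{-n}\xi)|_A)=H_{(\pi_A)_*\mu_A}((\overline f^{-n}\overline\xi)|_{\overline A})$, where $\mu_A$ is the conditional measure of $\mu$ on $A$.

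Next I would disintegrate. Writing $\overline\mu$ for the quotient measure of $\mu$ on the space of $\xi$-atoms and $\overline\nu$ for the quotient of $\nu$ on the space of $\overline\xi$-atoms, one has $h_\mu(f,\cF)=\int H_{\mu_A}((f^{-1}\xi)|_A)\,d\overline\mu(A)$ and $h_\nu(\overline f,\overline\cF)=\int H_{\nu_{\overline A}}((\overline f^{-1}\overline\xi)|_{\overline A})\,d\overline\nu(\overline A)$; moreover $\pi$ induces a map on atom spaces sending $\overline\mu$ to $\overline\nu$. Pushing the disintegration $\mu=\int\mu_A\,d\overline\mu(A)$ forward by $\pi$ and invoking uniqueness of disintegrations, one sees that $\nu_{\overline A}=\int(\pi_A)_*\mu_A\,d\overline\mu^{\overline A}(A)$, where $\overline\mu^{\overline A}$ is the conditional of $\overline\mu$ over the fiber $\{A:\pi(A)=\overline A\}$. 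Since $t\mapsto -t\log t$ is concave and $m\mapsto m(C)$ is linear, the map $m\mapsto H_m((\overline f^{-1}\overline\xi)|_{\overline A})$ is concave on probability measures on $\overline A$; Jensen's inequality, applied termwise and summed by monotone convergence, gives $H_{\nu_{\overline A}}((\overline f^{-1}\overline\xi)|_{\overline A})\geq\int H_{(\pi_A)_*\mu_A}((\overline f^{-1}\overline\xi)|_{\overline A})\,d\overline\mu^{\overline A}(A)$. Combining with the entropy-transport identity and integrating over $\overline A$ against $\overline\nu$ yields $h_\nu(\overline f,\overline\cF)\geq h_\mu(f,\cF)$; finiteness of the partial entropies, already recorded, legitimizes all the integrals.

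For the equality case, the converse direction is immediate: if $\mu$ is $c$-invariant then $(\pi_A)_*\mu_A=\nu_{\overline A}$ for $\overline\mu$-a.e.\ $A$, so every inequality above is an equality and $h_\mu(f,\cF)=h_\nu(\overline f,\overline\cF)$. For the forward direction, assume equality. Running the same argument with $f^{-n}\xi$ and $\overline f^{-n}\overline\xi$ in place of $f^{-1}\xi$ and $\overline f^{-1}\overline\xi$ --- legitimate because $H_\mu(f^{-n}\xi\mid\xi)=n\,h_\mu(f,\cF)$ and likewise for $\nu$, by $f$-invariance and the chain rule --- forces equality in the corresponding Jensen inequality for every $n$. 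Strict concavity of $-t\log t$ then gives, for each $n$ and $\overline\nu$-a.e.\ $\overline A$, that $(\pi_A)_*\mu_A$ and $\nu_{\overline A}$ agree on the countable partition $(\overline f^{-n}\overline\xi)|_{\overline A}$ for $\overline\mu^{\overline A}$-a.e.\ $A$. Intersecting these full-measure sets over $n$ and over the (countably many) partition elements, and using that $\bigvee_{n\geq0}\overline f^{-n}\overline\xi$ is the partition into points --- so that the $\sigma$-algebras generated by the $(\overline f^{-n}\overline\xi)|_{\overline A}$ separate points of $\overline A$ and hence generate its Borel $\sigma$-algebra modulo null sets --- one concludes $(\pi_A)_*\mu_A=\nu_{\overline A}$. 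Lifting this back says exactly $\pi_*\mu_x=\nu_{\pi(x)}$ for $\mu$-a.e.\ $x$, which is \eqref{eq.cinvariance}; since the choice of $\overline\xi$ was arbitrary and the entropies do not depend on it, $\mu$ is $c$-invariant for $\cF$.

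The main obstacle I anticipate is the careful bookkeeping with the countable leafwise partitions: justifying the concavity/Jensen inequality and, above all, its equality case when the leafwise entropies are sums of infinitely many terms, and the final bootstrapping step upgrading agreement on each $(\overline f^{-n}\overline\xi)|_{\overline A}$ to agreement of the full conditional measures. Measurability of the assignments $A\mapsto\mu_A$ and $\overline A\mapsto\nu_{\overline A}$, of the atom-space projection, and of the various a.e.\ identifications needed to pass between a point and its atom, also need some routine but not entirely trivial care; the finiteness of $h_\mu(f,\cF)$ and $h_\nu(\overline f,\overline\cF)$ is what keeps these manipulations under control.
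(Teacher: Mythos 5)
Your proposal is correct and follows essentially the same route as the paper: disintegrate twice (over base atoms and then over the fiber), use uniqueness of disintegration to write $\nu_{\overline A}$ as the average of the pushed-forward conditionals $(\pi_A)_*\mu_A$, apply Jensen with $\phi(t)=-t\log t$ atomwise, and in the equality case use strict concavity at every level $n$ together with the fact that $\bigvee_n\overline f^{-n}\overline\xi$ is the partition into points. The only cosmetic difference is that you make explicit (via the chain rule $H_\mu(f^{-n}\xi\mid\xi)=n\,h_\mu(f,\cF)$) the level-$n$ step that the paper dispatches with ``a similar argument.''
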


The authors used the $c$-invariance in order to obtain $s$ and $u$- invariance, and thus this is similar in some sense with the traditional invariance principle. However the center invariance is enough in our considerations.

In fact the exact statement in \cite{TY} [Theorem A] is more restrictive, because they deal with the unstable foliations of a partially hyperbolic diffeomorphism of skew-product type, however the proof uses only our hypothesis (see also \cite{VY3}). The ideas behind this result are very similar to the ideas behind the Pesin formula and its converse. A sketch of the proof is the following.

\begin{proof}[Proof of Theorem \ref{p.partialentropy}]

We will use the notations described above. Let $\tilde\mu_x$ be the disintegrations of $\mu$ along the partition $\pi^{-1}\overline\xi$. Then for $\mu$ almost every $x\in M$, the conditional measures of $\tilde\mu_x$ along $\xi$ are $\mu_y$, the same as the conditionals of $\mu$ along $\xi$, and denote by $\overline\mu_x$ the quotient measure. Then, for $\mu$ almost every $x$, $\pi_*\tilde\mu_x=\nu_{\pi(x)}$, and for every measurable set $A\in\overline\xi(\pi(x))$ we have
$$
\nu_{\pi(x)}(A)=\tilde\mu_x(\pi^{-1}(A))=\int\mu_z(\pi^{-1}(A))d\overline\mu_x(z).
$$

This says that the conditional measure below is the average of the conditional measures above. Let $\phi:[0,1]\rightarrow[0,\infty),\ \phi(t)=-t\log t$, which is positive and has the second derivative negative on $(0,1)$. Then, by Jensen inequality, for $\mu$ almost every $x$ we have
\begin{eqnarray*}
\phi(\nu_{\pi(x)}(A))&=&\phi\left(\int\mu_z(\pi^{-1}(A))d\overline\mu_x(z)\right)\\
&\geq&\int\phi\left(\mu_z(\pi^{-1}(A))\right)d\overline\mu_x(z),
\end{eqnarray*}
and the equality holds if and only if $\mu_z(\pi^{-1}(A))=\nu_{\pi(x)}(A)$ for $\overline\mu_x$ almost every $z\in\pi^{-1}(x)$. This implies that
\begin{eqnarray*}
H_{\nu_{\pi(x)}}\left(\overline f^{-1}\overline\xi\mid_{\overline\xi(\pi(x))}\right)&=&\sum_{A\in\overline f^{-1}\overline\xi,A\subset\overline\xi(\pi(x))}\phi\left(\nu_{\pi(x)}(A)\right)\\
&\geq&\int\sum_{B\in f^{-1}\xi,B\subset\xi(z)}\phi(\mu_z(B))d\overline\mu_x(z)\\
&=&\int H_{\mu_z}\left(f^{-1}\xi\mid_{\xi(z)}\right)d\overline\mu_x(z).
\end{eqnarray*}
This says that the entropy on a piece of the partition below is greater or equal than the average of the entropies on the corresponding pieces of the partition above. Now since $\pi$ preserves the quotient measures of $\pi^{-1}(\overline\xi)$ for $\mu$ and $\overline\xi$ for $\nu$, by integrating the inequality above we get the desired inequality. One can see that the equality holds if and only if for $\mu$ almost every $x$, for any $A\in f^{-1}\xi$ with $A\subset\xi(x)$, we have $\mu_x(A)=\nu_{\pi(x)}(\pi(A))$, i.e. $\pi_*\mu_x$ and $\nu_{\pi(x)}$ agree on the $\sigma$-algebra generated by $\overline f^{-1}\overline\xi$. A similar argument shows that the statement must hold also for the $\sigma$-algebra generated by $\overline f^{-n}\overline\xi$ for any positive integer $n$, and since the limit at infinity is the partition into points, we obtain that $\pi_*\mu_x=\nu_{\pi(x)}$ for $\mu$ almost every $x$.

\end{proof}

Now let us explain how we use this result in our setting. Let $L$ be a linear Anosov diffeomorphism over the torus $\TT^d$, irreducible and with simple real eigenvalues with distinct absolute values, $N$ be a compact Riemannian manifold without boundary, $M= \TT^d\times N$, and $f_0: M \to M$ be a $C^1$ partially hyperbolic skew product diffeomorphism
$$
f_0((x,y))=(L(x), h_x(y))
$$
with $\{\cdot\}\times N$ corresponding to the center bundle.

Like in the Subsection \ref{standhyp}, denote by $\lambda^s_{l}<\lambda^s_{l-1}<\cdots<\lambda^s_1<0<\lambda^u<\cdots<\lambda^u_k$ the exponents of $L$ and by $E^s_{l}\oplus E^s_{l-1}\oplus \cdots\oplus E^s_1\oplus E^u_1\oplus \cdots\oplus E^u_k$ the invariant splitting corresponding to the eigenvectors. Let $\cF^s_i, \cF^u_j$ be the corresponding one dimensional foliations, and $\cF^s_{1,i}$, $\cF^u_{1,j}$ the (weak unstable or 'center') foliations tangent to $\oplus_{t=1}^iE^{s}_t$ and respectively $\oplus_{t=1}^jE^{u}_t$, for all $1\leq i\leq l$, $1\leq j\leq k$. 

The following result can be deduced easily from \cite{HPS77}.

\begin{lemma}\label{l.perturbationquotient}

For any $C^1$ small perturbation $f$ of $f_0$, $f$ is dynamically coherent, the center foliation $\cF^c_f$ of $f$ forms a fiber bundle, and each fiber is homeomorphic to $N$. There exists a projection $\pi:\TT^d\times N\rightarrow\TT^d$ which semiconjugates $f$ with $L$, and which takes the (compact) center leaves of $f$ to points.

Moreover, the splittings of the stable and unstable bundles into one-dimensional sub-bundles persist: we have $E^{s,f}=E^{s,f}_{l}\oplus \cdots\oplus E^{s,f}_1$ and $E^{u,f}= E^{u,f}_1\oplus \cdots\oplus E^{u,f}_k$. The bundles $E^{s,f}_i$ and $E^{u,f}_j$, $\oplus_{t=1}^iE^{s,f}_t$ and $\oplus_{t=1}^jE^{u,f}_t$, $\oplus_{t=1}^iE^{s,f}_t\oplus E^{c,f}$ and $\oplus_{t=1}^jE^{u,f}_t\oplus E^{c,f}$ integrate to foliations $\cF^{s,f}_i$ and $\cF^{u,f}_j$, $\cF^{s,f}_{1,i}$ and $\cF^{u,f}_{1,j}$, $\cF^{cs,f}_{1,i}$ and $\cF^{cu,f}_{1,j}$, for all $1\leq i\leq l$, $1\leq j\leq k$. The weak stable and unstable leaves of $f$, $\cF^{s,f}_{1,i}$ and $\cF^{u,f}_{1,j}$ project homeomorphically by $\pi$ to the weak stable and unstable leaves of $L$, $\cF^{s}_{1,i}$ and $\cF^{u}_{1,j}$.

\end{lemma}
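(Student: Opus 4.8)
The plan is to derive everything from the Hirsch--Pugh--Shub theory of normally hyperbolic laminations applied to $f_0$, exploiting that $f_0$ is a skew product over a linear Anosov map whose eigenvalues are real, simple and of pairwise distinct absolute value, so that \emph{all} the invariant bundles in the statement are tangent, for $f_0$, to smooth plaque--expansive foliations which are $1$--normally hyperbolic.

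First I would handle the center fibration. For $f_0$ the center foliation is the trivial smooth fibration $\{x\}\times N$, which is normally hyperbolic and plaque expansive, so by \cite{HPS77} every $f$ that is $C^1$--close to $f_0$ is dynamically coherent and leaf conjugate to $f_0$ via a homeomorphism $H$ taking center leaves to center leaves. Thus $\cF^c_f=H(\{x\}\times N)$ is a continuous fiber bundle whose fibers are homeomorphic to $N$, and the leaf conjugacy descends to a homeomorphism $\overline H$ of $\TT^d=M/\cF^c_{f_0}$ onto $M/\cF^c_f$ conjugating $L$ to the map $\overline f$ induced by $f$ on the quotient. Composing the quotient projection with $\overline H^{-1}$ produces the continuous projection $\pi\colon M\to\TT^d$ with $\pi\circ f=L\circ\pi$, collapsing each center leaf to a point.

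Next I would record the persistence of the fine structure. Since the absolute values of the eigenvalues of $L$ are pairwise distinct and the center rates of $f_0$ lie strictly between $e^{\lambda^s_1}$ and $e^{\lambda^u_1}$, $f_0$ admits a dominated splitting $TM=\bigoplus_i E^{s,f_0}_i\oplus E^{c,f_0}\oplus\bigoplus_j E^{u,f_0}_j$; dominated splittings are $C^1$--robust, which yields the persistent bundles $E^{s,f}_i,E^{c,f},E^{u,f}_j$ and $E^{s,f}=\bigoplus_i E^{s,f}_i$, $E^{u,f}=\bigoplus_j E^{u,f}_j$. For $f_0$, each of the bundles $E^{s,f_0}_i$, $\bigoplus_{t\le i}E^{s,f_0}_t$, $\bigoplus_{t\le i}E^{s,f_0}_t\oplus E^{c,f_0}$ and the unstable analogues is tangent to a smooth $f_0$--invariant foliation, and one checks it is $1$--normally hyperbolic by splitting the normal bundle into the more strongly contracted stable eigenbundles and the complementary directions (the center together with the remaining, less contracted or expanding, eigenbundles); the needed inequalities are precisely $\lambda^s_{t+1}<\lambda^s_t$, $\lambda^s_1<\lambda^c_{\min}$, $\lambda^c_{\max}<\lambda^u_1$, and so on. Being smooth, these foliations are plaque expansive, so \cite{HPS77} produces, for $f$ close to $f_0$, an $f$--invariant foliation with $C^1$ leaves (uniformly $C^{1+\vep}$ in the purely hyperbolic cases, by the extra bunching coming from the spectral gaps), whose tangent bundle must be the corresponding persistent sub-bundle by uniqueness of the dominated splitting; this gives $\cF^{s,f}_i,\cF^{u,f}_j,\cF^{s,f}_{1,i},\cF^{u,f}_{1,j},\cF^{cs,f}_{1,i},\cF^{cu,f}_{1,j}$, with unique integrability of the one--dimensional bundles as in \cite{G08}[Lemmas 6.1--6.3]. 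Finally, since $\cF^{cs,f}_{1,i}\supset\cF^c_f$, it is $\pi$--saturated and descends to the $\overline f$--invariant foliation corresponding, via $\overline H$, to the weak stable foliation $\cF^{s}_{1,i}$ of $L$; within each $\cF^{cs,f}_{1,i}$--leaf the leaf $\cF^{s,f}_{1,i}$ is everywhere transverse to the center fibers, so by $C^1$--closeness to the model $\pi$ restricts to a homeomorphism of it onto a weak stable leaf of $L$, and the unstable case is symmetric.

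The hard part will be the third step: checking, uniformly over the several families of foliations, that the correct invariant splitting of the normal bundle makes each of them normally hyperbolic and plaque expansive, and --- the genuinely delicate point --- that the tangent bundle of the foliation produced by \cite{HPS77} coincides with the dominated--splitting bundle $E^{s,f}_i$ rather than merely lying near it; this, together with the bookkeeping ensuring that the persistent foliations nest correctly and are $\pi$--saturated exactly where claimed, is where ``easily from \cite{HPS77}'' conceals a short verification. Everything else is a routine application of \cite{HPS77} and of the integrability results of \cite{G08}.
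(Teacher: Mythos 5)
Your first paragraph (persistence of the center fibration via normally hyperbolic foliation theory, leaf conjugacy, and the construction of $\pi$ from the induced homeomorphism on leaf spaces) is fine and is exactly the deduction from \cite{HPS77} that the paper has in mind; the paper itself offers no more detail than the citation. The genuine problem is in your third step, where you claim that \emph{every} bundle in the list is tangent, for $f_0$, to a $1$-normally hyperbolic foliation, with the normal splitting ``more strongly contracted stable eigenbundles'' versus ``the center together with the remaining, less contracted or expanding, eigenbundles'', and that the needed inequalities are just the domination inequalities $\lambda^s_{t+1}<\lambda^s_t$, $\lambda^s_1<\lambda^c_{\min}$, etc. This conflates normal hyperbolicity with domination. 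In \cite{HPS77}, $r$-normal hyperbolicity at a foliation requires an invariant splitting $N^s\oplus T\cF\oplus N^u$ in which $N^s$ is \emph{absolutely} contracted and $N^u$ \emph{absolutely} expanded (the case $k=0$ of the defining inequalities), in addition to dominating the tangential rates. For a skew product $f_0$ the center direction is in general neither uniformly contracted nor uniformly expanded, so it cannot be placed in either $N^s$ or $N^u$; hence the strong stable/unstable foliations, the weak flags $\cF^{s,f_0}_{1,i}$, $\cF^{u,f_0}_{1,j}$, and all the one-dimensional foliations $\cF^{s,f_0}_i$, $\cF^{u,f_0}_j$ are \emph{not} normally hyperbolic foliations of $M$. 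For the intermediate one-dimensional bundles the failure is even independent of the center: the weaker same-sign directions in the normal bundle are dominated by the tangent but are neither contracted (for unstable-type leaves) nor more strongly contracted than the tangent (for stable-type leaves), so they fit in neither $N^s$ nor $N^u$. Thus the blanket application of the HPS persistence theorem covers only $\cF^c$, $\cF^{cs,f}_{1,i}$ and $\cF^{cu,f}_{1,j}$, and the step you yourself flagged as ``the hard part'' does fail as written.

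The finer structure has to be obtained by other means: the strong stable and unstable foliations come from the partially hyperbolic invariant manifold theorem (not from normally hyperbolic foliation persistence); the weak flags $\cF^{s,f}_{1,i}$, $\cF^{u,f}_{1,j}$ can then be produced, e.g., by intersecting the center flags $\cF^{cs,f}_{1,i}$, $\cF^{cu,f}_{1,j}$ with $\cF^{s,f}$, $\cF^{u,f}$, or by leafwise weak-manifold arguments inside the strong leaves; and the one-dimensional foliations $\cF^{s,f}_i$, $\cF^{u,f}_j$ require the leafwise graph-transform arguments of Gogolev (\cite{G08}, Lemmas 6.1--6.2, which is precisely what the paper quotes as Lemma \ref{l.1dintegrable} in the Anosov-base case) --- again not an instance of the normally hyperbolic foliation theorem. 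Relatedly, your final claim that $\pi$ maps $\cF^{s,f}_{1,i}$-leaves onto leaves of $\cF^s_{1,i}$ needs an argument beyond transversality and $C^1$-closeness: the leaf conjugacy only matches center leaves, so one must still identify the projected ($L$-invariant) foliation with the linear weak flag, e.g.\ via the uniqueness statement in \cite{HPS77} or a Lemma~\ref{l.inductioncenterleaf}-type argument. With these replacements the strategy goes through, but as stated the normal-hyperbolicity verification is the step that would fail.
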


Consequently, in order to apply Theorem \ref{p.partialentropy} we will consider  $M=\TT^d\times N$, $\overline M=\TT^d$, $\overline f=L$. The measure $\mu$ will be the volume on $M$, and $\nu=\pi_*\mu$. The expanding foliations $\cF$ and $\overline\cF$ will be conveniently chosen intermediate stable and unstable foliations of $f$ and $L$.

We also have the following result:

\begin{proposition}\label{p.base}

Let $\nu$ be any ergodic measure of $L$, then
$$
h_\nu(L)=h_\nu(L,\cF^u)\leq \sum_{i=1}^k \lambda^u_i=\lambda_L^u,
$$
and the equality holds if and only if $\nu$ is the volume, the unique measure of maximal entropy of $L$.

\end{proposition}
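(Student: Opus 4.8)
The plan is to reduce the proposition to three inputs: the classical identification, for the Anosov diffeomorphism $L$, of the Kolmogorov--Sinai entropy with the conditional entropy along the unstable foliation; the Ruelle inequality for the expanding foliation $\cF^u$; and the fact that the Lyapunov data of $L$ does not depend on the invariant measure, because $L$ is linear. The equality case will then be read off from the variational principle together with the uniqueness of the measure of maximal entropy.

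First I would fix the full unstable foliation $\cF^u$ of $L$, tangent to $E^u=E^u_1\oplus\cdots\oplus E^u_k$; it is a $C^\infty$ expanding foliation of $L$. I would then invoke the classical fact that the metric entropy of an Anosov diffeomorphism is carried entirely by its unstable foliation (Ledrappier--Strelcyn \cite{LS}, Ledrappier--Young \cite{LY1}): for every invariant measure $\nu$ of $L$,
$$h_\nu(L)=h_\nu(L,\cF^u).$$
This identity is obtained from the Rokhlin formula applied to a measurable partition subordinate to $\cF^u$, and it is the one external ingredient of the argument; I expect this to be the only delicate point, precisely because it must be cited in the form valid for an \emph{arbitrary} (not necessarily SRB) ergodic measure, where the subordinate partition is infinite.

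Next, since $L$ is linear, $DL$ is a constant matrix and the Lyapunov exponents of $L$ relative to any invariant measure are the logarithms of the absolute values of the eigenvalues of $L$. In particular, by \eqref{eq.exponentfoliation},
$$\lambda^{\cF^u}(L,\nu)=\int_{\TT^d}\log\det(DL\mid_{T\cF^u})\,d\nu=\sum_{i=1}^k\lambda^u_i=\lambda_L^u$$
for every $\nu$. Applying the Ruelle inequality of Theorem~\ref{l.criterionofgibbs} to the expanding foliation $\cF^u$ and combining with the previous step yields
$$h_\nu(L)=h_\nu(L,\cF^u)\le\lambda^{\cF^u}(L,\nu)=\lambda_L^u,$$
which is the asserted inequality.

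Finally, for the equality case I would use that $h_{\mathrm{top}}(L)=\lambda_L^u$ (the topological entropy of a hyperbolic toral automorphism equals the sum of its positive Lyapunov exponents) and that $L$ admits a unique measure of maximal entropy, namely the volume. If $\nu$ is the volume, it is this maximal measure, so $h_\nu(L)=h_{\mathrm{top}}(L)=\lambda_L^u$. Conversely, if $h_\nu(L)=\lambda_L^u=h_{\mathrm{top}}(L)$, then $\nu$ realizes the topological entropy and hence equals the volume by uniqueness. (Alternatively, equality forces $h_\nu(L,\cF^u)=\lambda^{\cF^u}(L,\nu)$, so the converse to the Pesin formula in Theorem~\ref{l.criterionofgibbs} gives $\nu\in\Gibb^e(L,\cF^u)$, i.e. $\nu$ is a Gibbs $u$-state; combined with $h_\nu(L)=h_{\mathrm{top}}(L)$ this again identifies $\nu$ with the maximal entropy measure.)
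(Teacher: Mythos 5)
Your argument is correct and essentially the paper's: both rest on the Ledrappier--Young identity $h_\nu(L)=h_\nu(L,\cF^u)$ and on the uniqueness of the measure of maximal entropy (the volume) for the equality case. The only cosmetic difference is that you derive the inequality from the Ruelle inequality along $\cF^u$ (using that the exponents of the linear map are measure-independent), whereas the paper gets it from $h_{\mathrm{top}}(L)=\lambda^u_L$ and the variational principle --- a step you in any case also invoke in the equality discussion.
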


\begin{proof}

By the entropy formula for Ledrappier-Young \cite{LY1}, $h_\nu(L)=h_\nu(L,\cF^u)$.

It is well-known that the topological entropy of $L$ equals to $ \lambda^u_L$, hence by variation principle, $h_\nu(L)\leq \lambda^u_L$. Moreover, the equality holds if and only if $\nu$ is the unique measure of maximal entropy of $L$.

\end{proof}

\section{Proof of Theorem~\ref{main.cneterfible}}\label{s.skew}

In this section we will give the proof of Theorem~\ref{main.cneterfible}. Throughout this section, let $f\in \Diff^2(M)$ be a partially hyperbolic diffeomorphism with one dimensional stable and unstable foliation, satisfying the hypothesis of Theorem \ref{main.cneterfible}. In particular we assume that it satisfies the conclusion of Lemma \ref{l.perturbationquotient}, so it fibers over the Anosov automorphism $L$ of the 2 torus.

\begin{proof}[Proof (Theorem \ref{main.cneterfible})]
We separate the proof in three parts.

\subsection{The center foliation is $C^{1+\vep}$}

Because $f$ is volume preserving, the Lebesgue measure is a Gibbs $u$-state for $f$. By the entropy formula of Ledrappier-Young \cite{LY2} and our hypothesis,

\begin{equation}\label{eq.partialentropyabove}
h_{\Leb}(f,\cF^{u,f})=\int \lambda^u(f,x)d\Leb(x)=\lambda^u_L.
\end{equation}

Denote by $\nu=\pi_*(\Leb)$.

\begin{lemma}\label{l.suinvariantandbase}

The Lebesgue measure is $c$-invariant for $\cF^{u,f}$ and $\cF^{s,f}$, and $\nu$ is the Lebesgue measure on $\TT^2$, or the unique maximal entropy measure of $L$.

\end{lemma}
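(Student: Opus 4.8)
The plan is to apply Theorem~\ref{p.partialentropy} with $M=\TT^2\times N$, $\overline M=\TT^2$, $\overline f=L$, the invariant measure $\mu=\Leb$, and $\nu=\pi_*(\Leb)$. For this I first take the expanding foliation on the base to be the full unstable foliation $\cF^u_L$ of $L$ (since $\dim E^u=1$, this is already one-dimensional), and the expanding foliation on $M$ to be the unstable foliation $\cF^{u,f}$ of $f$; by Lemma~\ref{l.perturbationquotient} these project homeomorphically onto each other under $\pi$, so the hypotheses of Theorem~\ref{p.partialentropy} are met. Theorem~\ref{p.partialentropy} then gives
$$
h_{\Leb}(f,\cF^{u,f})\leq h_\nu(L,\cF^u_L),
$$
with equality if and only if $\Leb$ is $c$-invariant for $\cF^{u,f}$.

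Next I combine this with the two ``one-sided'' bounds that pin down both ends. On one side, since $f$ is volume preserving the Lebesgue measure is a Gibbs $u$-state, so by the Ledrappier--Young entropy formula together with the hypothesis on the average unstable exponent (equation~\eqref{eq.partialentropyabove}) we have $h_{\Leb}(f,\cF^{u,f})=\lambda^u_L$. On the other side, Proposition~\ref{p.base} gives $h_\nu(L,\cF^u_L)=h_\nu(L)\leq\lambda^u_L$, with equality exactly when $\nu$ is the (unique) measure of maximal entropy of $L$, i.e. the volume on $\TT^2$. Chaining these,
$$
\lambda^u_L=h_{\Leb}(f,\cF^{u,f})\leq h_\nu(L,\cF^u_L)\leq\lambda^u_L,
$$
forces equality throughout. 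The first equality therefore forces $\Leb$ to be $c$-invariant for $\cF^{u,f}$, and the second forces $\nu$ to be the unique maximal entropy measure of $L$, hence the Lebesgue measure on $\TT^2$.

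Finally, for the stable foliation I run the identical argument with $f$ replaced by $f^{-1}$ and $L$ by $L^{-1}$: the Lebesgue measure is also a Gibbs $u$-state for $f^{-1}$ (i.e. a Gibbs $e$-state for $\cF^{s,f}$), the hypothesis gives that its average exponent along $\cF^{s,f}$ equals that of $L$, and $\pi_*(\Leb)$ is still $\nu$, which we have just identified; so the same squeeze yields that $\Leb$ is $c$-invariant for $\cF^{s,f}$ as well. I expect the only delicate point to be bookkeeping: checking carefully that the Ledrappier--Young formula indeed yields $h_{\Leb}(f,\cF^{u,f})$ equal to the integral of the full unstable exponent here (which uses that $\dim E^u=1$ so there is a single unstable exponent and the entropy along $\cF^{u,f}$ coincides with the full metric entropy by the $su$-decomposition), and likewise that Proposition~\ref{p.base} applies to the possibly non-ergodic $\nu$ by decomposing into ergodic components. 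The conceptual content is entirely contained in Theorem~\ref{p.partialentropy} and Proposition~\ref{p.base}; there is no genuine obstacle beyond this verification.
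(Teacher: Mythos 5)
Your plan is correct and coincides with the paper's own proof: the same squeeze $\lambda^u_L = h_{\Leb}(f,\cF^{u,f}) \leq h_\nu(L,\cF^u) = h_\nu(L) \leq \lambda^u_L$ via Theorem~\ref{p.partialentropy}, \eqref{eq.partialentropyabove} and Proposition~\ref{p.base}, with the two equalities yielding $c$-invariance and $\nu=\Leb_{\TT^2}$ respectively, and the stable case handled by the same argument for $f^{-1}$. The verification points you flag (single unstable exponent, ergodic decomposition for Proposition~\ref{p.base}) are indeed only bookkeeping and pose no obstacle.
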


\begin{proof}

By Theorem~\ref{p.partialentropy} applied to $\cF^{u,f}$ and $\cF^u$, we have $h_{\Leb}(f,\cF^{u,f})\leq h_\nu(L,\cF^u)$. Combine with Proposition~\ref{p.base} and \eqref{eq.partialentropyabove}, we have
$$
\lambda^u_L=h_{\Leb}(f,\cF^{u,f})\leq h_\nu(L,\cF^u)= h_\nu(L)\leq \lambda^u_L,
$$
and so the above items must be all equal. Thus $h_\nu(L,\cF^{u,f})= h_\nu(L)= \lambda^u_L$, by Proposition~\ref{p.base}, $\nu$ is the volume, or the unique measure of maximal entropy of $L$.

Moreover, we have shown that $h_{\Leb}(f,\cF^{u,f})= h_\nu(L,\cF^u)$, then by Theorem~\ref{p.partialentropy}, the Lebesgue measure is $c$ invariant for $\cF^{u,f}$. A similar argument works for $\cF^{s,f}$ and $f^{-1}$.

\end{proof}

Let $\overline{\xi}^u$ be a partition subordinated to $\cF^u$, constructed from a Markov partition of $L$, by intersecting pieces of the Markov partition with local unstable leaves (it is easy to check that such a partition is subordinated to $\cF^u$). We can lift it to a subordinated partition $\xi^u$ of $\cF^{u,f}$ by taking the preimage by $\pi$ and intersecting with local unstable leaves. Let $\mu_x$ be the conditional measures of the volume on $M$ along $\xi^u$ and $\nu_y$ the conditional measures of the volume on $\TT^2$ along $\overline{\xi}^u$. From the definition of $c$ invariance we know that, for $\mu$ almost every $x\in M$, we have that
\begin{equation}\label{eq.projmeasure}
\pi_{*}\mu_x=\nu_{\pi(x)}.
\end{equation}

Let $B$ be any piece of the Markov partition. Note that $\nu_{\pi(x)}$ is the normalized Lebesgue measure on the unstable segment of $W^u$, while $\mu_x$ is absolutely continuous with respect to Lebesgue on the $\cF^u$, and has the density uniformly continuous on $\pi^{-1}(B)$ and uniformly $C^1$ along the leaves. As we observed before, even if initially $\mu_x$ is defined only almost everywhere, the uniform continuity of the density and the full support of the volume allow us to define it for {\it every} $x\in \pi^{-1}(B)$, and the family $\mu_x$ will be continuous in $x$. This in turn implies that the above relation \eqref{eq.projmeasure} holds for {\it every} $x\in \pi^{-1}(B)$.

Now, repeating the argument from Theorem \ref{main.technique}, since $\pi$ restricted to an unstable piece of $f$ takes a smooth measure with $C^1$ density to the Lebesgue measure on a segment of $W^u$, and we are in the one dimensional situation, we obtain that $\pi\mid_{\xi^u(x)}$ is a $C^2$ diffeomorphism with $\overline{\xi}^u(\pi(x))$, uniformly with respect to $x\in B$. Since the central holonomy between two unstable leaves of $f$, inside a center-unstable leaf of $f$, is the composition of two diffeomorphisms as above, we get that the center holonomy restricted to the center-unstable foliation is also uniformly $C^2$ inside $\pi^{-1}(B)$. Now, since $M$ is covered by finitely many pieces of the Markov partition, we obtain that the center holonomy restricted to the center-unstable foliation is uniformly $C^2$ on the whole manifold $M$ (one can eventually modify the Markov partition in order to deal with the boundary points).

Because for a $C^2$ partially hyperbolic diffeomorphism, the center bundle is always Holder, we know that the center leaves are uniformly $C^{1+\vep}$ for some $\vep>0$. By Journ\'{e} regularity lemma for foliations (Lemma~\ref{l.Jourfoliation}), we obtain that $\cF^{c,f}\mid_{\cF^{cu,f}_{loc}(x)}$ is uniformly $C^{1+\vep}$.

In a similar way, one can show that $\cF^{c,f}\mid_{\cF^{cs,f}_{loc}(x)}$ is uniformly $C^{1+\vep}$. We will use the following version of Journ\'{e} regularity lemma for foliations (Lemma 4.5 of \cite{DX}).

\begin{lemma}\label{l.journe3}
Suppose $\cF_i,\ i = 1, 2, 3$ are foliations of a smooth manifold M with uniformly $C^{r+}$ leaves. We assume $\cF_{1,2}$ subfoliate $\cF_3$ and $\cF_1$ transverse to $\cF_2$ within $\cF_3$. Moreover $\mathcal W := \cF_1 \cap \cF_2$ is a $C^{r+}$ foliation within $\cF_i,\ i = 1, 2$ respectively, then $\mathcal W$ is a $C^{r+}$ foliation in $\cF_3$.
\end{lemma}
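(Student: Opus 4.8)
This is a foliated version of Journ\'{e}'s regularity lemma; the full argument is carried out in \cite{DX} (see also \cite{PSW,BX}), and we only outline the strategy. The plan is to reduce the statement, leaf by leaf, to the one-map Journ\'{e} regularity lemma, Lemma~\ref{l.Jour}. First I would fix a leaf $\cL$ of $\cF_3$, which carries a $C^{r+}$ differentiable structure, uniformly over the leaves of $\cF_3$; it then suffices to prove that $\cW$ induces a uniformly $C^{r+}$ foliation of $\cL$. Since $\cF_1$ and $\cF_2$ subfoliate $\cF_3$, they restrict to continuous foliations of $\cL$ with uniformly $C^{r+}$ leaves, transverse to one another inside $\cL$, and $\cW=\cF_1\cap\cF_2$ restricts to a foliation of $\cL$ whose leaves, being intersections of $C^{r+}$ leaves of $\cF_1$ with $C^{r+}$ leaves of $\cF_2$, are uniformly $C^{r+}$. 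By transversality inside $\cL$ one has $\dim\cW=\dim\cF_1+\dim\cF_2-\dim\cF_3$, so a local transversal to $\cW$ inside $\cL$ has the complementary dimension. The goal then becomes: every holonomy of $\cW$ inside $\cL$ between two such transversals is uniformly $C^{r+}$.

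Next I would fix such a holonomy $h\colon T_a\to T_b$, where $T_a,T_b$ are $C^{r+}$ transversals to $\cW$ inside $\cL$, chosen with uniformly controlled geometry so that the traces of $\cF_1$ and of $\cF_2$ on them are $C^{r+}$. These traces give transverse foliations $\cA:=\cF_1|_{T_a}$ and $\cB:=\cF_2|_{T_a}$ of $T_a$, with uniformly $C^{r+}$ leaves, complementary dimensions, and $\cA\cap\cB$ the partition of $T_a$ into points; similarly $\cA':=\cF_1|_{T_b}$ and $\cB':=\cF_2|_{T_b}$ on $T_b$. Because every leaf of $\cW$ lies inside a single leaf of $\cF_1$ and inside a single leaf of $\cF_2$, the $\cW$-holonomy $h$ maps leaves of $\cA$ into leaves of $\cA'$ and leaves of $\cB$ into leaves of $\cB'$. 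Moreover, the restriction of $h$ to a leaf of $\cA$ is precisely a $\cW$-holonomy inside one leaf of $\cF_1$, hence uniformly $C^{r+}$ by the hypothesis that $\cW$ is $C^{r+}$ within $\cF_1$; and the restriction of $h$ to a leaf of $\cB$ is a $\cW$-holonomy inside a leaf of $\cF_2$, hence uniformly $C^{r+}$.

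Thus $h$ would be a homeomorphism carrying the transverse pair $(\cA,\cB)$ of $C^{r+}$ foliations to the transverse pair $(\cA',\cB')$ and restricting to a uniformly $C^{r+}$ map along the leaves of both. Journ\'{e}'s regularity lemma (Lemma~\ref{l.Jour}), applied with $r\notin\mathbb{N}$ so that no regularity is lost, then gives that $h$ is $C^{r+}$. Since all bounds can be kept uniform in the choice of $\cL$, $T_a$ and $T_b$, this shows that $\cW$ is a $C^{r+}$ foliation within $\cF_3$.

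The main difficulty here is bookkeeping rather than conceptual: one must produce, inside each leaf $\cL$ of $\cF_3$, transversals $T_a,T_b$ with uniformly controlled $C^{r+}$ geometry for which the traces $\cA,\cB$ of $\cF_1,\cF_2$ are honestly $C^{r+}$ and the holonomy decomposition above is uniform, all while remembering that $\cF_1,\cF_2,\cF_3$ are only assumed to have $C^{r+}$ leaves, so that every differentiability statement must be read inside a fixed leaf of $\cF_3$. This uniform construction is exactly what is done in \cite{PSW} and \cite{DX}, so in the paper we simply invoke it.
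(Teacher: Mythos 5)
The paper does not prove this lemma at all: it is quoted verbatim from \cite{DX} (Lemma 4.5), which in turn rests on \cite{PSW}, so there is no in-paper argument for your proposal to diverge from. Your leafwise reduction to the map version of Journ\'e's lemma (Lemma~\ref{l.Jour}) via $\cW$-holonomies restricted to the traces of $\cF_1$ and $\cF_2$ on transversals, with the uniform construction of those transversals deferred to \cite{PSW,DX}, is precisely the standard argument behind that citation and is sound.
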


Applying the above result for $r=1+\vep$, $\mathcal W=\cF^{c,f}$, $\cF_1=\cF^{cu,f}$, $\cF_2=\cF^{cs,f}$, and $\cF_3=M$, we will obtain that $\cF^{c,f}$ is a $C^{1+\vep}$ foliation. This concludes the first part of the proof.

\subsection{If $\dim(E^c)=1$ then the center foliation is $C^{\infty}$}
For the following lemma, we assume that $f$ is $C^\infty$, the center bundle is one dimensional, and the center leaf is homeomorphic to the one dimensional circle.

\begin{lemma}\label{l.centersmooth}

The center foliation $\cF^{c,f}$ is $C^\infty$.

\end{lemma}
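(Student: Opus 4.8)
\textbf{Proof plan for Lemma \ref{l.centersmooth}.}
The plan is to bootstrap the $C^{1+\vep}$ regularity of $\cF^{c,f}$ obtained in the first part of the proof up to $C^\infty$, by first showing that the center direction is uniformly non-expanding and non-contracting along orbits, then applying the theory of normally hyperbolic foliations, and finally re-running the argument of the first part now that the obstructing leaves have become smooth.

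First I would record a consequence of the first part of the proof. Since $\cF^{c,f}$ is a $C^{1+\vep}$ foliation forming a fiber bundle with one-dimensional (circle) fibers, the disintegration of the volume along the center leaves has on each leaf a density $\rho$ with respect to the Riemannian arc length which is continuous and strictly positive, so $0<\inf_M\rho\le\sup_M\rho<\infty$ by compactness of $M$. Because the volume is $f$-invariant and $f$ maps center leaves to center leaves, the conditional measures transform by $f_*(\rho\,dl)=\rho\,dl$ along leaves, which gives the pointwise identity $|D(f\mid_{\cF^{c,f}})(z)|=\rho(z)/\rho(f(z))$ and hence, telescoping,
$$
\bigl|\,D(f^n\mid_{T_z\cF^{c,f}})\,\bigr|=\frac{\rho(z)}{\rho(f^n(z))}\in\Bigl[\tfrac{\inf_M\rho}{\sup_M\rho},\ \tfrac{\sup_M\rho}{\inf_M\rho}\Bigr]\qquad\text{for all }n\ge1,\ z\in M .
$$
Thus $\sup_{n\ge1,\,z\in M}\|Df^n\mid_{E^c_z}\|<\infty$ and, since $\dim E^c=1$, also $\sup_{n\ge1,\,z\in M}\|(Df^n\mid_{E^c_z})^{-1}\|<\infty$: the center cocycle has uniformly bounded (in particular sub-exponential) growth in both time directions.

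With this uniform bound, $f$ is $r$-normally hyperbolic along $\cF^{c,f}$, and also along the (already known $C^1$, $f$-invariant) foliations $\cF^{cu,f}$ and $\cF^{cs,f}$, for \emph{every} $r\ge1$. For instance for $\cF^{cu,f}$ one has $\|Df^n\mid_{E^{s}}\|\cdot\|(Df^n\mid_{E^{cu}})^{-1}\|^{r}\le e^{-n\lambda}\cdot C^{r}\to0$, using that $\|(Df^n\mid_{E^{cu}})^{-1}\|$ is dominated by the bounded center contribution (the unstable contribution contracts for $f^{-1}$), and symmetrically for $\cF^{cs,f}$ with $f^{-1}$. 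As these foliations have $C^1$ leaves they are plaque expansive, so by the Hirsch--Pugh--Shub theory of normally hyperbolic foliations \cite{HPS77} the leaves of $\cF^{c,f}$, $\cF^{cu,f}$ and $\cF^{cs,f}$ are uniformly $C^r$ for every $r$, hence uniformly $C^\infty$. It then remains to promote ``$C^\infty$ leaves'' to ``$C^\infty$ foliation'': here I would repeat the argument of the first part of the proof with $f\in\Diff^\infty$, so that by Proposition \ref{p.Gibbsestate} the disintegration densities of the volume along the one-dimensional (un)stable foliations of $f$ are uniformly $C^{r-1}$ along leaves for every $r$; the one-dimensional measure-rigidity argument then gives that $\pi$ restricted to the unstable (resp. stable) plaques is uniformly $C^r$, hence that the center holonomy inside $\cF^{cu,f}$ along $\cF^{u,f}$ (resp. inside $\cF^{cs,f}$ along $\cF^{s,f}$) is uniformly $C^r$ for every $r$. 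Applying Lemma \ref{l.Jourfoliation} inside $\cF^{cu,f}$ and inside $\cF^{cs,f}$ --- legitimate now that these have $C^\infty$ leaves --- shows that $\cF^{c,f}$ is a $C^r$ foliation within $\cF^{cu,f}$ and within $\cF^{cs,f}$ for every non-integer $r$, and Lemma \ref{l.journe3} with $\mathcal W=\cF^{c,f}$, $\cF_1=\cF^{cu,f}$, $\cF_2=\cF^{cs,f}$, $\cF_3=M$ then yields that $\cF^{c,f}$ is a $C^r$ foliation of $M$ for every non-integer $r>1$, hence a $C^\infty$ foliation.

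The main obstacle is the uniform control of the center cocycle in the second paragraph: this is precisely where the first part of the proof, and therefore the hypothesis on the stable and unstable exponents, is used, since it is the $C^{1+\vep}$-regularity of $\cF^{c,f}$ that forces the disintegration density $\rho$ to be continuous and bounded away from $0$ and $\infty$ so that the center Jacobians telescope to a bounded quantity; without this one would only get $r$-normal hyperbolicity for finitely many $r$ and the argument would stall at some finite regularity. A secondary point to verify carefully is that the Hirsch--Pugh--Shub smoothing genuinely applies to $\cF^{cu,f}$ and $\cF^{cs,f}$ (invariance, plaque expansivity, and the exact form of $r$-normal hyperbolicity), which is what removes the $C^{1+\vep}$ ceiling coming from the regularity of these leaves in the first part of the proof.
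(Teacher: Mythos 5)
Your proposal is correct and follows essentially the paper's own strategy: the invariance of the continuous disintegration of volume along the (already $C^{1+\vep}$) center foliation forces the center derivative cocycle to be uniformly bounded (the paper's ``neutral'' condition, which it derives by comparing lengths of center segments rather than by your telescoped Jacobian identity), this boundedness yields smoothness of the center and center-(un)stable leaves, and re-running the holonomy argument with $C^\infty$ densities plus the Journ\'e-type Lemmas \ref{l.Jourfoliation} and \ref{l.journe3} upgrades the foliation to $C^\infty$. The only real difference is cosmetic: the paper gets leaf smoothness from $\infty$-bunching via \cite{PSW} (invoking \cite{HPS77} only for the $\cF^{cs,f}$ and $\cF^{cu,f}$ leaves), whereas you invoke $r$-normal hyperbolicity for all $r$ via \cite{HPS77} throughout; note that plaque expansivity is not needed for the leaf-smoothness conclusion you use, which is just as well since your justification of it from ``$C^1$ leaves'' alone would not be adequate.
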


\begin{proof}

As we mentioned in the first part of the proof of Theorem \ref{main.cneterfible}, $\pi\mid_{\xi^u(x)}$ is a $C^2$ diffeomorphism with $\overline{\xi}^u(\pi(x))$, uniformly with respect to $x$. Since we assume now that $f$ is $C^{\infty}$, it follows that the leaves of $\cF^u$ and $\cF^{u,f}$ are uniformly $C^{\infty}$, the disintegrations of the Lebesgue measures along $\xi^u(x)$ and $\overline{\xi}^u(\pi(x))$ have uniform $C^{\infty}$ densities, so the argument from the previous section shows in fact that the center holonomy restricted to the center-unstable foliation is uniformly $C^{\infty}$ inside $\pi^{-1}(B)$. Consequently, in order to apply again Lemma \ref{l.journe3} and show that the center foliation is $C^\infty$, it suffices to prove that the center leaves $\cF^{c,f}(x)$ are $C^\infty$ for every $x\in M$ (classical results from \cite{HPS77} will imply that $\cF^{cs,f}$ and $\cF^{cu,f}$ also have $C^{\infty}$ leaves).

We will need to use the notion of center bunching. Given $r>0$, we say that the partially hyperbolic diffeomorphism $f$ is \emph{$r$-bunched} if there exists $k\geq 1$ such that for any $p\in M$ we have:

\begin{equation}\label{eq.centerbunch}
\begin{aligned}
&\sup \|D_pf^k\mid_{E^{s,f}}\|\|(D_pf^k\mid_{E^{c,f}})^{-1}\|\|D_pf^k\mid_{E^{c,f}}\|^r<1&\\
&\sup \|(D_pf^k\mid_{E^{u,f}})^{-1}\|\|D_pf^k\mid E^{c,f}\|\|(D_pf^k\mid_{E^{c,f}})^{-1}\|^r<1.&\\
\end{aligned}
\end{equation}

By \cite{PSW}, the center leaf of $f$ is $C^r$ if $f$ is $r$-bunched. Then we only need to verify that $f$ is $\infty$-bunched. Since the center bundle is one dimensional, we only need to verify that for any $r>1$, there exists some $k>0$ such that:
$$
\sup_p \|D_pf^k\mid_{E^s}\|\|D_pf^k\mid_{E^c}\|^r<1 \text{ and } \sup_p \|(D_pf^k\mid_{E^u})^{-1}\|\|D_pf^k\mid_{E^c}\|^{-r}<1.
$$
These inequalities are clearly satisfied if $Df\mid_{E^{c,f}}$ is neutral, in the sense that there exists $K>0$ such that, for any $n>0$, we have
$$
\frac{1}{K}\leq \|D_p f^n\mid_{E^{c,f}}\|\leq K.
$$

Because $\cF^{c,f}$ is a $C^1$ foliation, by Fubini theorem, there exists a family of continuous disintegration $\{\Leb^c_{x}\}_{x\in M}$ of the Lebesgue measure along the center leaves, such that on each center leaf, the disintegration is equivalent to the Lebesgue measure of the leaf with continuous density, and the density is bounded uniformly from above and away from zero. By the uniqueness of the disintegration, this disintegration is invariant by $f$, meaning that:
\begin{equation}\label{eq.iterationinv}
f_*(\Leb^c_x)=\Leb^c_{f(x)} \text{ for any }x\in M.
\end{equation}
Write $d\Leb^c_{x}(y)=\phi(y)d\Leb\mid_{\cF^{c,f}(x)}$, then there exists $K_1>0$ such that, for any $y\in M$, we have
\begin{equation}\label{eq.uniformdensity}
\frac{1}{K_1}<\phi(y)<K_1.
\end{equation}
Then we can finish the proof applying the following lemma:

\begin{lemma}
$$
\frac{1}{K_1^2}\leq \|D_p f^n\mid_{E^{c,f}}\|\leq K_1^2.
$$

\end{lemma}

\begin{proof}

Suppose there exists $x\in M$ and $n>0$ such that $\|D_x f^n\mid_{E^{c,f}}\|> K_1^2$. Then there exists a small center segment containing $x$ such that $|f^n(I)|>K_1^2 |I|$, where $|I|$ denotes the length of segment. By \eqref{eq.uniformdensity}, $\Leb^c_x(I)\leq K_1|I|$ and $\Leb^c_{f^n(x)}(f^n(I))\geq \frac{1}{K_1}|f^n(I)|> K_1 |I_1|$.
Thus $\Leb^c_x(I)\neq \Leb^c_{f^n(x)}(f^n(I))$, a contradiction with \eqref{eq.iterationinv}.

\end{proof}
\end{proof}

\subsection{Conjugacy with the true skew-product}
Now we will explain how to (smoothly) conjugate $f$ with a true skew product over $L$. Choose $y_0\in N$ and consider the torus $\TT^2_0=\TT^2\times \{y_0\}$. Because for $f_0$ the center foliation is transverse to $\TT^2\times\{y\}$, by the continuity of center foliation with respect to the diffeomorphisms, the center foliation of $f$ is also transverse to $\TT^2\times\{y\}$, for all $y\in N$. Thus for every $y,z\in N$ we have the center holonomy $\cH^c_{y,z}:\TT^2\times\{y\}\rightarrow\TT^2\times\{z\}$, which is a $C^{1+\vep}$ diffeomorphism. We may identify $\TT^2_0=\TT^2$, and define $\pi_c:M\to\TT^2_0$ the projection along the center foliation of $f$. Since the center foliation of $f$ is $C^{1+\vep}$, thus $f$ induces a $C^{1+\vep}$ quotient map $\overline f: \TT^2\to \TT^2$, $\overline f= \pi_c \circ f\mid_{\TT^2_0}$. Moreover, $\overline f$ preserves a smooth volume measure $\nu=\pi_*(Leb\mid M)$ of $\TT^2_0$.

\begin{lemma}\label{l.2d}

$\overline f$ is an Anosov diffeomorphism homotopic to $L$.

\end{lemma}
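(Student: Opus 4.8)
The plan is to build the hyperbolic splitting of $\overline f$ out of the projections of the center-stable and center-unstable foliations of $f$, and to read off the homotopy class from that of $f_0$.

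First I would push the foliations $\cF^{cs,f}$ and $\cF^{cu,f}$ down to the quotient. They are $f$-invariant and $\cF^{c,f}$-saturated, so the center projection $\pi_c$, which semiconjugates $f$ with $\overline f$, carries them to $\overline f$-invariant one-dimensional foliations $\overline\cF^s$ and $\overline\cF^u$ of $\TT^2_0\cong\TT^2$. These are transverse (as $\cF^{cs,f}$ and $\cF^{cu,f}$ are transverse in $M$, both containing the center direction), and since the center foliation is $C^{1+\vep}$ and $\cF^{cs,f}$, $\cF^{cu,f}$ have $C^{1+\vep}$ leaves by H\"older continuity of the center bundle, their leaves are $C^1$ and vary continuously in the $C^1$ topology. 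Hence $T\overline\cF^s$ and $T\overline\cF^u$ are continuous transverse line fields, giving a continuous $D\overline f$-invariant splitting $T\TT^2_0=T\overline\cF^s\oplus T\overline\cF^u$.

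Next I would upgrade this to uniform hyperbolicity. Recall that in the first part of the proof $\pi_c$ restricted to any local unstable leaf of $f$ was shown to be a uniformly $C^2$ diffeomorphism onto the corresponding leaf of $\overline\cF^u$, conjugating $f|_{\cF^{u,f}}$ with $\overline f|_{\overline\cF^u}$; in particular the derivative of this restriction and of its inverse are bounded above and below by a uniform constant $C\geq 1$. From $\|Df^n|_{E^{u,f}}\|\geq e^{n\lambda}$ one then gets $\|D\overline f^n|_{T\overline\cF^u}\|\geq e^{n\lambda}/C^2$, which exceeds $1$ uniformly once $n$ is large; after passing to an adapted metric this says that $\overline\cF^u$ is an expanding foliation for $\overline f$. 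The same argument applied to $f^{-1}$ and $\cF^{s,f}$ makes $\overline\cF^s$ expanding for $\overline f^{-1}$, so together with the previous paragraph $\overline f$ is Anosov with $E^{s,\overline f}=T\overline\cF^s$ and $E^{u,\overline f}=T\overline\cF^u$.

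Finally, for the homotopy class: for the genuine skew product $f_0$ the center foliation is $\{\cdot\}\times N$, so $\pi_c(x,y)=(x,y_0)$ and $\overline{f_0}(x,y_0)=\pi_c(L(x),h_x(y_0))=(L(x),y_0)$, i.e. $\overline{f_0}=L$ under $\TT^2_0\cong\TT^2$. Taking an isotopy from $f_0$ to $f$ inside the given $C^1$ neighborhood and using Lemma~\ref{l.perturbationquotient} together with the continuous dependence of the center foliation on the diffeomorphism, the induced quotient maps vary continuously along the isotopy, so $\overline f$ is homotopic to $\overline{f_0}=L$. The one step needing genuine care is the uniform two-sided control of $\pi_c$ along the one-dimensional stable and unstable leaves of $f$, so that the uniform hyperbolicity of $f$ transverse to the center descends to $\overline f$ rather than just a topological conjugacy; but this was already obtained in the first part of the proof of Theorem~\ref{main.cneterfible}, and the rest is bookkeeping about projections and transversality.
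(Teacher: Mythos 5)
Your proposal is correct and follows essentially the same route as the paper: define the invariant line fields on $\TT^2_0$ by projecting $E^{u,f}$ and $E^{s,f}$ (equivalently, intersecting $\cF^{cu,f}$, $\cF^{cs,f}$ with $\TT^2_0$), use two-sided bounds on $D\pi_c$ along these bundles together with $\overline f^{\,n}=\pi_c\circ f^n$ to transfer the uniform hyperbolicity of $f$ to some power of $\overline f$, and get the homotopy class from closeness to $L$. The only small slip is attributing the two-sided control of $D\pi_c$ on unstable leaves to the uniform $C^2$ statements of the first part (which concern the bundle projection $\pi$ and the center holonomy between $f$-unstable leaves); it is obtained more directly, as in the paper, from the uniform transversality of $E^{u,f}$ to $E^{c,f}=\ker D\pi_c$, the projection $\pi_c$ being $C^1$ because the center foliation is $C^{1+\vep}$.
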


\begin{proof}

$\overline f$ is homotopic to $L$ because it is $C^0$ close to $L$.

For any $y\in M$, denote by $y'=\pi_c(y)\in \TT^2_0$, and let $E^{u,\overline f}(y')=D\pi_c(E^{u,f}(y))$. It is clear that the definition is independent of $y\in\pi_c^{-1}(y')$ because $\pi_c$ preserves $\cF^{cu,f}$ (in fact an equivalent definition could be $E^{u,\overline f}(y')=E^{cu,f}(y')\cap \left(T\TT^2\times\{0\}\right)$).

Because the bundle $E^{u,f}$ is uniformly away from the bundle $E^{c,f}=Ker(D\pi_c)$, there exists $K_3>0$ such that
\begin{equation}\label{eq.boundproj}
\frac{1}{K_3}<m(D\pi_c\mid_{E^{u,f}(y)})\leq \|D\pi_c\mid_{E^{u,f}(y)}\|<K_3,
\end{equation}
where $m(L)$ denotes the minimal norm of the linear map $L$.

We have that
\begin{equation}\label{eq.leafconjugate}
\overline f^n=\pi_c\circ f^n.
\end{equation}

Using  \eqref{eq.boundproj} and \eqref{eq.leafconjugate}, and the uniform expansion of $Df$ along $\cF^{u,f}$, we obtain that there exists $m_0>0$ such that
$$
m(D\overline f^{m_0}\mid_{E^{u,\overline f}_{y'}})>1 \text{ for any  } y'\in \TT^2_0.
$$
Similarly, we can show that
$$
\|D\overline f^{m_0}\mid_{E^{s,\overline f}_{y'}}\|<1 \text{ for any  } y'\in \TT^2_0.
$$
The two inequalities above also show that $E^{u,\overline f}\oplus E^{s,\overline f}$ is a hyperbolic splitting, thus $\overline f$ is an Anosov diffeomorphism.

\end{proof}

Let $h_1:\TT^2\times N\rightarrow\TT^2\times N$, $h_1(x,y)=(\cH^c_{y,y_0}(x,y),y)$. Then clearly $h_1$ is a $C^{1+\vep}$ diffeomorphism which takes the center leaves $W^{c,f}(x,y)$ to $\{\pi_c(x,y)\}\times N$. Then $h_1$ smoothly conjugates $f$ to $f_1=h_1\circ f\circ h_1^{-1}$ which is a $C^{1+\vep}$ true skew product over $\overline f|_{\TT^2_0}$. The average Lyapunov exponents of $f$ and $f_1$ must coincide since they are smoothly conjugated, and the stable and unstable Lyapunov exponents of $f_1$ must coincide with the Lyapunov exponents of $\overline f$ since $f_1$ is a true skew product over $\overline f$. Then by our hypothesis $\overline f$ and $L$ must have the same exponents, so they must be $C^{1+\vep}$ conjugated by a diffeomorphism $h'$. Taking $h_2=(h',Id\mid_N)$ and $h=h_2\circ h_1$ we obtain that $h$ is a $C^{1+\vep}$ conjugacy between $f$ and a true skew product over $L$.

Now let us assume that $N=S^1$ and the center foliation of $f$ is $C^{\infty}$. We know that $f$ is $C^\infty$ conjugated with a true skew product map over $L$: $f_2: \TT^2\times S^1\to \TT^2\times S^1$, $f_2(x,y)=(L(x), g_x(y))$. We also know that $f_2$ preserves a $C^{\infty}$ volume $\nu$. Consider the continuous family of disintegration $\{\nu^c_x\}_{x\in M}$ of $\nu$ along the center foliation of $f_2$. Because the center foliation is $C^\infty$, and $\nu$ is $C^{\infty}$ equivalent to Lebesgue, we know by Fubini theorem, that for every $x\in M$, $\nu^c_x$ is equivalent to $\Leb\mid_{\cF^c(x)}$ with $C^\infty$ density. Moreover, the disintegration is invariant under the iterations of $f_2$.

For any two points $y,z$ in the same center leaf, we use $[y,z]$ to denote the center segment from $y$ to $z$ in the anti-clockwise direction. Let $h_3:M\rightarrow M$, $h_3(x,y)=(x, \nu^c_x([0,y]))$. It is easy to see that $h_3$ is a $C^{\infty}$ diffeomorphism and $h_{3*}\nu=\Leb\mid_M$. Then $h_3$ is a $C^{\infty}$ conjugacy between $f_2$ and $f'$, which is a $C^{\infty}$ true skew product over $L$ which preserves the Lebesgue measure on the center circles. But this implies that the restrictions of $f'$ to the center fibers must be rotations which finishes our proof.

\end{proof}

\section{Proof of Theorem~\ref{main.higherbase}}\label{higherbase}

The proof of the Theorem~\ref{main.higherbase} is parallel to the proof of Theorem~\ref{main.cneterfible} and the proof of Theorem \ref{main.higherdimension}.

\begin{proof}

Let $\nu:=\pi_*\Leb$ be an invariant measure for $L$ on $\TT^d$.

Observe that by Lemma \ref{l.perturbationquotient} we know that the unstable foliation of $f$ decomposes into one dimensional unstable foliations $\cF^{u,f}_i$, which have uniformly $C^{1+\vep}$ leaves, since all the sub-bundles are H\H older continuous. We know by hypothesis that $h_{\Leb}(f,\cF^{u,f}_i)=\lambda^u_i$, for all $1\leq i\leq k$. By Ruelle inequality, the corresponding Lyapunov exponent of $f$ for $\Leb$ and $\cF^{u,f}_i$ must satisfy
$$
h_{\Leb}(f,\cF^{u,f}_i)=\lambda^u_i\leq \lambda^u_i(f,\Leb),\ \ \forall 1\leq i\leq k.
$$
If one of these inequalities is strict, then taking the sum for all $i$ we obtain
\begin{equation}\label{eq.exponentssum}
\lambda^u(f,\Leb)=\sum_{i=1}^k\lambda^u_i(f,\Leb)>\sum_{i=1}^k\lambda^u_i=\lambda^u(L),
\end{equation}
which is a contradiction, because the absolute continuity of $\cF^{u,f}$ and the so-called Ma\~ ne argument would imply that the volume of the unstable leaves grows faster than the topological growth of the linear part, which is impossible (see \cite{SX09} or \cite{Sa}).

An alternative proof of the contradiction is the following: By Pesin formula we have that $h_{\Leb}(f,\cF^{u,f})=\lambda^u(f,\Leb)$; by the Ruelle inequality we have $h_{\nu}(L,\cF^u)\leq\lambda^u(L)$; these facts together with \eqref{eq.exponentssum} give $h_{\Leb}(f,\cF^{u,f})>h_{\nu}(L,\cF^u)$, but this contradicts Theorem \ref{p.partialentropy}.

Consequently we have that $\lambda^u_i(f,\Leb)= h_{\Leb}(f,\cF^{u,f}_i)=\lambda^u_i,\ \ \forall 1\leq i\leq k$. Using the converse of the Pesin formula for expanding foliations, we obtain that the volume must be an expanding Gibbs state for each foliation $\cF^{u,f}_i$, $1\leq i\leq k$. The same argument works for the corresponding one dimensional stable foliations and for $f^{-1}$.

We also have that $\lambda^u(f,\Leb)=\lambda^u(L)$, and by Lemma \ref{l.suinvariantandbase} we know that $\nu$ must be the volume on $\TT^d$ and the Lebesgue measure is $c$-invariant for $\cF^{u,f}$ and $\cF^{s,f}$.

Next, we will show by induction that the center holonomy is uniformly $C^{1+\vep}$ between the leaves of $\cF^{u,f}_i$, for all $1\leq i\leq k$.

For $i=1$, we know from Lemma \ref{l.perturbationquotient} that $\cF^{u,f}_1$ is invariant by the center holonomy, and it projects by $\pi$ to the invariant foliation $\cF^u_1$ of $L$. Furthermore the volume above projects to the volume below, and the conditional entropies along the expanding foliations above and below coincide. In conclusion, we can apply Theorem \ref{p.partialentropy} and the same arguments from the proof of Theorem \ref{main.cneterfible} in order to conclude that the center holonomy is uniformly $C^{1+\vep}$ along $\cF^{u,f}_1$ leaves inside $\cF^{cu,f}_1$, and the projection $\pi$ from $\cF^{u,f}_1$ leaves of $f$ to $\cF^{u}_1$ leaves of $L$ is uniformly $C^{1+\vep}$ diffeomorphism.

Now assume that the projection $\pi$ is a uniform $C^{1+\vep}$ diffeomorphism from $\cF^{u,f}_{j}$ leaves of $f$ to $\cF^{u}_{j}$ leaves of $L$ and that the center holonomy is uniformly $C^{1+\vep}$ along $\cF^{u,f}_{j}$ leaves inside $\cF^{cu,f}_{j}$, for all $1\leq j\leq i$. We have an analog of Gogolev's result from Lemma \ref{l.inductioncenterleaf} which we can apply in this situation.

\begin{lemma}\label{l.gogolevskew}
Suppose that the projection $\pi$ is a uniform $C^{1+\vep}$ diffeomorphism from $\cF^{u,f}_{i}$ leaves of $f$ to $\cF^{u}_{i}$ leaves of $L$. Suppose also that the leaves of $\cF^{u,f}_{i,k}$ project by $\pi$ homeomorphically to leaves of $\cF^u_{i,k}$. Then the leaves of $\cF^{u,f}_{i+1,k}$ project by $\pi$ homeomorphically to leaves of $\cF^u_{i+1,k}$.
\end{lemma}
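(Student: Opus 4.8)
The statement is the precise analogue, for the projection $\pi$ in place of a conjugacy, of Gogolev's Lemma~\ref{l.inductioncenterleaf}, and the plan is to transport the proof of \cite{G08}[Lemma 6.6] with $h$ replaced everywhere by $\pi$. \textbf{Step 1 (localization).} First I would fix a leaf $W$ of $\cF^{u,f}_{i,k}$. By hypothesis $\pi|_W$ is a homeomorphism onto $\overline W:=\pi(W)$, a leaf of $\cF^u_{i,k}$, conjugating the leafwise expanding maps $f|_W$ and $L|_{\overline W}$. Inside $W$ there are two transverse, complementary subfoliations: the one-dimensional $\cF^{u,f}_i$ (tangent to the \emph{slowest} direction $E^{u,f}_i$ present in $W$) and the codimension-one $\cF^{u,f}_{i+1,k}$; correspondingly $\cF^u_i$ and $\cF^u_{i+1,k}$ inside $\overline W$. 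By hypothesis $\pi$ sends $\cF^{u,f}_i$-leaves onto $\cF^u_i$-leaves, with restriction to each such leaf uniformly $C^{1+\vep}$. Since $\pi|_W$ is a homeomorphism and $W$, $\overline W$ carry the corresponding local product structures, it suffices to prove that $\pi$ maps every $\cF^{u,f}_{i+1,k}$-leaf \emph{into} a $\cF^u_{i+1,k}$-leaf: granting this, for $z\in W$ with $\pi(z)\in\cF^u_{i+1,k}(\pi(x))$ the point $x':=\cF^{u,f}_i(x)\cap\cF^{u,f}_{i+1,k}(z)$ satisfies $\pi(x')\in\cF^u_i(\pi(x))\cap\cF^u_{i+1,k}(\pi(x))=\{\pi(x)\}$, so $x'=x$ and $z\in\cF^{u,f}_{i+1,k}(x)$; this local statement globalizes since both $(\pi|_W)^{-1}(\cF^u_{i+1,k})$ and $\cF^{u,f}_{i+1,k}$ are partitions of $W$ into connected sets, and $\pi|_W$ being a homeomorphism then yields the homeomorphic correspondence between leaves.

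\textbf{Step 2 (functional equation).} Next I would choose coordinates $(x_1,x_2)$ on $W$ in which $\{x_2=\mathrm{const}\}$ are the leaves of $\cF^{u,f}_i$ and $\{x_1=\mathrm{const}\}$ the leaves of $\cF^{u,f}_{i+1,k}$, and affine coordinates $(\bar x_1,\bar x_2)$ on $\overline W$ in which $\cF^u_i$, $\cF^u_{i+1,k}$ are the corresponding parallel foliations. Since $f$ preserves both subfoliations, $f|_W(x_1,x_2)=(\alpha(x_1),\beta(x_2))$ with $\alpha$ expanding at rate $\approx e^{\lambda^u_i}$ and $\beta$ expanding at rates in $[e^{\lambda^u_{i+1}},e^{\lambda^u_k}]$; since $L$ preserves both linear subfoliations, $L|_{\overline W}(\bar x_1,\bar x_2)=(\mu_i\bar x_1,L'\bar x_2)$ with $|\mu_i|=e^{\lambda^u_i}$; and since $\pi$ carries $\cF^{u,f}_i$ to $\cF^u_i$, the conjugacy is triangular, $\pi|_W(x_1,x_2)=(\Phi(x_1,x_2),\Psi(x_2))$, with $x_1\mapsto\Phi(x_1,x_2)$ uniformly $C^{1+\vep}$ with derivative bounded away from $0$ and $\infty$, by hypothesis. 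The lemma amounts to $\partial_{x_2}\Phi\equiv 0$, and the conjugacy relation $\pi|_W\circ f|_W=L|_{\overline W}\circ\pi|_W$ gives the cocycle identity $\Phi(\alpha(x_1),\beta(x_2))=\mu_i\,\Phi(x_1,x_2)$.

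\textbf{Step 3 (conclusion and main obstacle).} The heart of the matter is to deduce $\Phi=\Phi(x_1)$ from this identity. Iterating it relates the transverse variation of $\Phi$ along a fixed fast leaf to the same variation pushed by $f|_W$, amplified by $|\mu_i|^n=e^{\lambda^u_i n}$ yet simultaneously regularized because the backward dynamics $\beta^{-n}$ contracts the $\cF^{u,f}_{i+1,k}$-leaves at the strictly larger rate $e^{\lambda^u_{i+1}}$; the strict gap $\lambda^u_{i+1}>\lambda^u_i$ is what makes the scheme converge. The essential difficulty — and the reason hypothesis (a) cannot be dropped, as the de la Llave--Gogolev counterexamples \cite{L92,G08} show — is that $\pi|_W$ is a priori only Hölder, with exponent roughly $\lambda^u_i/\lambda^u_k$, which is in general too small to beat $\lambda^u_i/\lambda^u_{i+1}$ in the estimate; one must upgrade the transverse regularity of $\Phi$ by exploiting that $\Phi$ is uniformly $C^{1+\vep}$ along the one-dimensional foliation $\cF^{u,f}_i$ and running the non-stationary-linearization bootstrap exactly as in \cite{G08}[Lemma 6.6]. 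A further technical point, also treated there, is that $W$ and $\overline W$ are non-compact and $\Phi$ is unbounded, so the convergence must be organized within a fundamental domain, or along a Markov partition of $L$, rather than naively. Once $\pi$ is shown to preserve $\cF^{u,f}_{i+1,k}$, this is fed back into the induction in the proof of Theorem~\ref{main.higherbase}: one applies Theorem~\ref{p.partialentropy} together with the one-dimensional argument of Theorem~\ref{main.technique} to recover $\pi|_{\cF^{u,f}_{i+1}}\in C^{1+\vep}$, and then invokes the present lemma again with $i$ replaced by $i+1$, the base case $i=1$ being supplied by Lemma~\ref{l.perturbationquotient}.
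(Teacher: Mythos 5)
Your Steps 1--2 are a reasonable setup (the reduction to showing that $\pi$ maps each $\cF^{u,f}_{i+1,k}$-leaf \emph{into} a $\cF^u_{i+1,k}$-leaf, and the triangular form of $\pi$ on an unstable leaf with the relation $\Phi(\alpha(x_1),\beta(x_2))=\mu_i\,\Phi(x_1,x_2)$), but the entire content of the lemma is the step you then leave out: deducing that $\Phi$ does not depend on $x_2$. You correctly diagnose that the naive iteration fails, because transversally to $\cF^{u,f}_i$ the map $\pi$ is a priori only H\"older with an exponent that need not beat $\lambda^u_i/\lambda^u_{i+1}$, but your remedy is to assert that one ``runs the non-stationary-linearization bootstrap exactly as in \cite{G08}[Lemma 6.6]''. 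No such bootstrap is carried out, it is not explained how $C^{1+\vep}$ regularity of $\pi$ \emph{along} the one-dimensional leaves could upgrade its \emph{transverse} regularity, and this is in any case not what Gogolev's Lemma 6.6 (or the paper's proof) actually does. So the heart of the argument is missing; what you have is a plan, not a proof.

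The paper's proof (following Gogolev, with the conjugacy replaced by the semiconjugacy $\pi$) avoids any transverse-regularity upgrade of $\pi$ and argues by contradiction with different ingredients. If some $y\in\cF^{u,f}_{i+1,k}(x)$ has $\overline y=\pi(y)\notin\cF^u_{i+1,k}(\overline x)$, one considers the holonomy $\cH_{x,y}$ along $\cF^{u,f}_{i+1,k}$ between the one-dimensional leaves $\cF^{u,f}_i(x)$ and $\cF^{u,f}_i(y)$; this is uniformly $C^1$ because the bundle $E^{u,f}_{i+1,k}$ is $C^1$ inside the leaves of $\cF^{u,f}_{i,k}$. Conjugating it by $\pi\mid_{\cF^{u,f}_i}$, which is a uniform $C^{1+\vep}$ diffeomorphism by hypothesis, gives a $C^1$ map $\overline\cH_{\overline x,\overline y}$ between $\cF^u_i$-lines satisfying $\overline\cH_{\overline x,\overline y}=L^n\circ\overline\cH_{L^{-n}\overline x,L^{-n}\overline y}\circ L^{-n}$; since the linear rate $e^{\lambda^u_i}$ cancels and $d(f^{-n}(x),f^{-n}(y))\to 0$, uniform continuity of $D\pi\mid_{\cF^{u,f}_i}$ and uniform $C^1$-ness of the holonomies force $D\overline\cH_{\overline x,\overline y}\equiv 1$, so $\overline\cH_{\overline x,\overline y}$ is a nontrivial translation (by the displacement between $\overline y$ and $\overline z=\cF^u_{i+1,k}(\overline x)\cap\cF^u_i(\overline y)$). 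Then minimality of $\cF^u_i$, continuity of $\pi$ and $L$-invariance force the lift of $\pi(\cF^{u,f}_{i+1,k})$ to contain all points $\overline x+n(\overline y-\overline x)$, hence to lie in a band of fixed slope, which is incompatible with invariance under the irreducible $L$. These three ingredients --- the $C^1$ holonomy of the fast subfoliation inside unstable leaves, the equivariance/derivative-one argument, and the final band/irreducibility contradiction --- are exactly what your outline lacks, and they cannot be replaced by the unproved regularity bootstrap you invoke.
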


\begin{proof}
The proof follows closely the one from Gogolev, we just replace the conjugacy with the semiconjugacy, so we will give just a sketch. Suppose that the conclusion is not true, then there exists $x\in\TT^d\times N$, $\overline x=\pi(x)\in\TT^d$, $y\in\cF^{u,f}_{i+1,k}(x)$, with $\overline y=\pi(y)\notin\cF^u_{i+1,k}(\overline x)$. Since $\pi$ takes $\cF^{u,f}_{i,k}$ to $\cF^u_{i,k}$, we have that $\overline y\in\cF^u_{i,k}(\overline x)$. Let $\overline z=\cF^u_{i+1,k}(\overline x)\cap\cF^u_{i}(\overline y)$.

Let $\cH_{x,y}$ be the holonomy along $\cF^{u,f}_{i+1,k}$, from $\cF^{u,f}_{i}(x)$ to $\cF^{u,f}_{i}(y)$. Since $E^{u,f}_{i+1,k}$ is $C^1$ inside $\cF^{u,f}_{i,k}$, we have that $\cH_{x,y}$ must be (uniformly) $C^1$. Let
$$
\overline\cH_{\overline x,\overline y}=\pi\mid_{\cF^{u,f}_{i}(y)}\circ\cH_{x,y}\circ\left(\pi\mid_{\cF^{u,f}_{i}(x)}\right)^{-1}.
$$
As a composition of $C^1$ functions, $\overline\cH_{\overline x,\overline y}$ must be $C^1$. Observe also that
$$
\overline\cH_{\overline x,\overline y}=L^n\circ\overline\cH_{L^{-n}\overline x,L^{-n}\overline y}\circ L^{-n},
$$
so
$$
D\overline\cH_{\overline x,\overline y}=e^{\lambda^u_i}D\overline\cH_{L^{-n}\overline x,L^{-n}\overline y}e^{-\lambda^u_i}=D\overline\cH_{L^{-n}\overline x,L^{-n}\overline y}.
$$
Also
$$
D\overline\cH_{L^{-n}\overline x,L^{-n}\overline y}=\left(D\pi\mid_{\cF^{u,f}_{i}(f^{-n}(y))}\right) D\cH_{f^{-n}(x),f^{-n}(y)} \left(D\pi\mid_{\cF^{u,f}_{i}(f^{-n}(x))}\right)^{-1}.
$$
Since $d(f^{-n}(x),f^{-n}(y))\rightarrow 0$, uniform continuity of $D\pi\mid_{\cF^{u,f}_i}$ and the fact that $\cH$ is uniformly $C^1$ (given by a uniform $C^1$ bundle inside $\cF^{u,f}_{i,k}$), one gets that $D\overline\cH_{\overline x,\overline y}=1$, so $\overline\cH_{\overline x,\overline y}$ must be a translation by $d(\overline y,\overline z)$. The minimality of $\cF^u_i$ and the continuity of $\pi$ would then imply that the lift of $\pi(\cF^{u,f}_{i+1,k})$ to the universal cover $\mathbb R^d$ of $\TT^d$ contains all the points $\overline x+n(\overline y-\overline x)$, so must lie in a band of slope $(\overline y-\overline x)$, but this would contradict the invariance under $L$.

\end{proof}

We apply inductively Lemma \ref{l.gogolevskew}, together with the observation that if $\pi$ projects $\cF^{u,f}_{i+1,k}$ to $\cF^{u}_{i+1,k}$, then it will also project $\cF^{u,f}_{i+1}$ to $\cF^{u}_{i+1}$ (because we know from \ref{l.perturbationquotient} that $\pi$ projects $\cF^{u,f}_{1,i+1}$ to $\cF^{u}_{1,i+1}$).

Once we establish that the projection $\pi$ sends homeomorphically the leaves of $\cF^{u,f}_{i+1}$ to leaves of $\cF^{u}_{i+1}$, we can apply Theorem \ref{p.partialentropy} and the same methods as above in order to conclude that the projection $\pi$ is a uniform $C^{1+\vep}$ diffeomorphism from $\cF^{u,f}_{i+1}$ leaves of $f$ to $\cF^{u}_{i+1}$ leaves of $L$ and that the center holonomy is uniformly $C^{1+\vep}$ along $\cF^{u,f}_{i+1}$ leaves inside $\cF^{cu,f}_{i+1}$.

Now, in order to finish the proof of Theorem \ref{main.higherbase}, we apply as before the Journ\'e regularity result for foliations, and we use the same arguments for $f^{-1}$ in order to deal with the stable direction. The proof of the conjugacy with the true skew product is similar to the proof in Theorem \ref{main.cneterfible}.

\end{proof}

\section{Further remarks}

In this last subsection we want to make some comments about (possible) extensions of our results.

\begin{enumerate}

\item {\bf Anosov diffeomorphisms with non-simple spectrum.}

It seems possible to extend Theorem \ref{main.higherdimension} to Anosov maps without simple spectrum, under some additional conditions. One may hope to use the work of de la Llave, Gogolev, Kalinin and Sadovskaya on the rigidity of these maps. Very recently Gogolev-Kalinin-Sadovskaya obtained results in this direction in \cite{GKS2}.

\item {\bf Derived from Anosov maps.}

We also expect that some of the results from this paper can be extended to derived from Anosov maps. We propose the following conjecture.

\begin{conjecture}
Let $f$ be a smooth volume preserving, derived from Anosov partially hyperbolic diffeomorphism on $\TT^3$, isotopic to the Anosov automorphism $L$. Assume that the three (average) Lyapunov exponents of $f$ equal to the three Lyapunov exponents of $L$. Then $f$ is smoothly conjugated to $L$.
\end{conjecture}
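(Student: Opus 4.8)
The plan is to reduce the conjecture to Theorem~\ref{main.3d} by proving that, under the stated hypotheses, $f$ is necessarily Anosov; then $f$ is a $C^\infty$ volume preserving partially hyperbolic Anosov diffeomorphism isotopic to $L$ with the same Lyapunov exponents as $L$, so Theorem~\ref{main.3d} applies verbatim and gives a $C^\infty$ conjugacy. By the topological classification of derived from Anosov diffeomorphisms on $\TT^3$ (Lemma~\ref{l.DA} and the references there), $f$ is dynamically coherent, the Franks semiconjugacy $h_f$ (the unique map isotopic to the identity with $L\circ h_f=h_f\circ f$) sends the stable, center, center-stable and center-unstable foliations of $f$ onto the corresponding linear ones, and every fiber $h_f^{-1}(q)$ is a (possibly degenerate) compact arc lying in a single center leaf of $f$. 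Thus $f$ is Anosov if and only if $h_f$ is injective.

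\textbf{Step 1 (entropy bookkeeping).} Replacing $f$ by $f^{-1}$ if necessary, we may assume the center direction of $L$ is the weak unstable one, so $\lambda^c_L=\log k_2>0$. Since the three Lyapunov exponents of $f$ with respect to $\Leb$ are ordered by domination and add up to $\log|\det L|=0$, the hypothesis forces $\lambda^{ss}(f,\Leb)=\log k_1$, $\lambda^{c}(f,\Leb)=\log k_2$ and $\lambda^{uu}(f,\Leb)=\log k_3$. As $\lambda^c>0$ the volume is a hyperbolic SRB measure, so Pesin's formula gives $h_\Leb(f)=\lambda^c(f,\Leb)+\lambda^{uu}(f,\Leb)=\log(k_2k_3)$, which equals $h_{top}(L)=h_{top}(f)$; thus $\Leb$ is a measure of maximal entropy of $f$, hence, by Ures' intrinsic ergodicity theorem, the unique one, and in particular $\Leb$ is ergodic. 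Moreover the fibers of $h_f$ carry no topological entropy, so the relativized variational principle yields $h_{(h_f)_*\Leb}(L)\ge h_\Leb(f)=h_{top}(L)$, whence $(h_f)_*\Leb$ is the (unique) measure of maximal entropy of $L$, i.e.\ $(h_f)_*\Leb=\Leb_{\TT^3}$.

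\textbf{Step 2 (ruling out center collapsing).} Let $Z\subset\TT^3$ be the union of the nondegenerate fibers of $h_f$; then $Z$ is $f$-invariant, $Z=h_f^{-1}(h_f(Z))$, and $h_f(Z)$ is $L$-invariant, so $\Leb(Z)=\Leb_{\TT^3}(h_f(Z))\in\{0,1\}$ by ergodicity of Haar under $L$. Consider the disintegration of $\Leb$ along the center foliation $\cF^c_f$; by the atomic/absolutely-continuous dichotomy for volume preserving partially hyperbolic maps on $\TT^3$, it is either absolutely continuous with essentially bounded densities, or atomic (a full-volume set meeting each center leaf in one point). In the absolutely continuous case, suppose $Z\neq\emptyset$: the set of center leaves of $f$ meeting $Z$ in a nondegenerate arc is $f$-invariant, so by ergodicity of the induced action of $L$ on the leaf space it has full transverse measure, and then the Fubini property gives $\Leb(Z)>0$, hence $\Leb(Z)=1$; but then, for a.e.\ center leaf, the conditional measure is carried by collapsed arcs, and on such an arc $I$ the iterate $f^n(I)=h_f^{-1}(L^nq)$ has $\cF^c_f$-length bounded uniformly in $n$ (since $h_f$ is at bounded distance from the identity), so $\int_I\log\|Df^n|_{E^c}\|\,d\Leb^c$ is bounded above uniformly in $n$; Jensen's inequality then forces the conditional average of the center exponent over $I$ to be $\le 0$, and integration over the transversal yields $\lambda^c(f,\Leb)\le 0$, contradicting $\lambda^c(f,\Leb)=\log k_2>0$. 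Hence in the absolutely continuous case $Z=\emptyset$, $f$ is Anosov, and Theorem~\ref{main.3d} concludes the proof.

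\textbf{Main obstacle.} The genuinely hard case is the atomic one: then the Fubini argument above collapses, there is a full-volume set meeting each center leaf in one point, and the equality $\lambda^c(f,\Leb)=\log k_2$ is only an averaged statement, a priori compatible with arcs crushed by $h_f$ and with periodic orbits of nonpositive center exponent. Excluding this must genuinely use the sharp equality $\lambda^c(f,\Leb)=\lambda^c_L$ rather than mere positivity --- for example through a Ledrappier--Young / volume-growth estimate along $\cF^{cu}_f$ forcing the center-unstable conditionals of $\Leb$ to be absolutely continuous, or through a version of the invariance principle of Section~\ref{s.invarianceprinciple} applied to $h_f$ itself (whose fibers, however, do not form a genuine fiber bundle) --- and this is exactly where the present techniques fall short, which is why the statement is posed as a conjecture. (If one assumes from the outset that the center foliation of $f$ is absolutely continuous, Step~2 completes the proof.)
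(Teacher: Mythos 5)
You should first note that the statement you were asked to prove is not a theorem of the paper at all: it appears in the final section precisely as an open conjecture, and the authors explicitly remark that their methods ``do not extend immediately to the DA situation, because in this case the center foliation is not uniformly expanding.'' So there is no proof in the paper to compare against, and your text, by your own admission in the paragraph ``Main obstacle,'' is not a proof either. Your reduction is sensible and matches the intended structure of the problem: if one could show that $f$ is Anosov (equivalently that the Franks semiconjugacy $h_f$ is injective), then Theorem~\ref{main.3d} applies verbatim and gives the $C^\infty$ conjugacy; and your Step 1 (the entropy bookkeeping showing $\Leb$ is the unique measure of maximal entropy of $f$, using Pesin's formula, the variational principle and Ures' intrinsic ergodicity, and that $(h_f)_*\Leb$ is Haar measure) is essentially correct, modulo the small slip that $\lambda^c(f,\Leb)>0$ is only an average, so positivity of the pointwise center exponent a.e.\ has to be extracted from the entropy comparison rather than assumed before invoking Pesin's formula.

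The genuine gap is exactly where you locate it, but it is worth being precise about why your Step 2 does not close even the case you claim to close. The ``atomic/absolutely-continuous dichotomy'' for the disintegration of volume along the center foliation of an arbitrary volume preserving partially hyperbolic (in particular DA) diffeomorphism of $\TT^3$, with essentially bounded densities in the absolutely continuous alternative, is not an available theorem in this generality; known results of this type (Avila--Viana--Wilkinson, Ponce--Tahzibi--Var\~ao) concern restricted classes and do not give the clean dichotomy you invoke. Likewise, the passage from ``$Z\neq\emptyset$'' to ``full transverse measure of collapsed leaves'' via ergodicity of the induced action on the (non-Hausdorff, essentially trivial) leaf space, and the Fubini step, need the very regularity of the center disintegration that is in question. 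So what you actually prove is: \emph{if} the center disintegration of $\Leb$ is absolutely continuous with bounded densities, then $h_f$ is injective and Theorem~\ref{main.3d} finishes the argument --- a conditional statement, with the unconditional claim (ruling out collapsed fibers, i.e.\ the atomic scenario, using only equality of averaged exponents) left open. Since the whole difficulty of the conjecture is concentrated in that case --- the center direction of a DA map is not uniformly expanding, so the Gibbs $e$-state machinery of Theorem~\ref{main.technique} does not apply to it --- your proposal should be regarded as a reduction and a plan of attack, not a proof.
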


Let us remark here that our methods from this paper do not extend immediately to the DA situation, because in this case the center foliation is not uniformly expanding.

\item {\bf Skew products over higher dimensional Anosov maps.}

It may be possible to extend Theorem \ref{main.higherbase} in order to obtain the following result:

\begin{conjecture}Let $L$ be an irreducible Anosov automorphism of $\TT^d$ with simple real eigenvalues with distinct absolute values, $N$ a compact manifold, $M=\TT^d\times N$, and $f_0:M\to M$ a partially hyperbolic skew product over $L$. Let $f$ be a $C^2$ volume preserving  partially hyperbolic diffeomorphism $C^1$ close to $f_0$, and assume that the (strong) stable and unstable exponents of $f$ coincide with the exponents of $L$. Then the center foliation of $f$ is $C^{1+\vep}$ for some $\vep>0$, and $f$ is $C^{1+\vep}$ conjugated with a (true) skew product over $L$.
\end{conjecture}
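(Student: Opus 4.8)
The plan is to reduce the conjecture to Theorem~\ref{main.higherbase}, whose only extra hypothesis over the conjecture is that the conditional entropies of the volume along the one-dimensional stable and unstable subfoliations of $f$ already coincide with the exponents of $L$. First I would record the structural facts that come for free. Since $L$ has simple real spectrum with distinct absolute values, the strong unstable bundle of $f_0$ splits as $E^{uu}_{f_0}=E^u_1\oplus\cdots\oplus E^u_k$ into one-dimensional dominated pieces (lifts of the eigenlines of $L$), and this dominated splitting, together with the integrability of the flag bundles, persists for every $f$ that is $C^1$-close to $f_0$; this is exactly Lemma~\ref{l.perturbationquotient}, and it produces one-dimensional expanding foliations $\cF^{u,f}_i$ with uniformly $C^{1+\vep}$ leaves, and similarly $\cF^{s,f}_j$ for $f^{-1}$, so the first hypothesis of Theorem~\ref{main.higherbase} holds automatically. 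Next, exactly as in the proof of Theorem~\ref{main.cneterfible} (Lemma~\ref{l.suinvariantandbase}), one shows that $\pi_*\Leb$ is the volume on $\TT^d$ and $h_{\Leb}(f,\cF^{u,f})=\lambda^u_L$; since $\Leb$ is $f$-invariant and equivalent to Riemannian volume it is a Gibbs $u$-state, so the Pesin formula gives $\lambda^u(f,\Leb)=h_{\Leb}(f,\cF^{u,f})=\lambda^u_L$, and the hypothesis that the strong unstable exponents of $f$ agree with those of $L$ forces $\lambda^u_i(f,\Leb)=\lambda^u_i$ for every $i$ (and dually for the stable exponents and $f^{-1}$).

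What remains, and this is the whole content of the conjecture, is to upgrade ``$\lambda^u_i(f,\Leb)=\lambda^u_i$'' to ``$h_{\Leb}(f,\cF^{u,f}_i)=\lambda^u_i$'', equivalently (by the converse Pesin formula, Theorem~\ref{l.criterionofgibbs}) to ``$\Leb$ is a Gibbs expanding state for each $\cF^{u,f}_i$''. Part of this is genuinely free: by the Ledrappier--Young entropy decomposition \cite{LY1} along the flag $\cF^{u,f}_{1,1}\subset\cdots\subset\cF^{u,f}_{1,k}=\cF^{u,f}$ one has $\sum_i\bigl(h_{\Leb}(f,\cF^{u,f}_{1,i})-h_{\Leb}(f,\cF^{u,f}_{1,i-1})\bigr)=h_{\Leb}(f,\cF^{u,f})=\lambda^u_L$, while each summand is at most $\lambda^u_i(f,\Leb)=\lambda^u_i$ by the Ruelle inequality applied to the $i$-th flag step, so every relative flag-step entropy equals $\lambda^u_i$; moreover, inside each flag leaf $\cF^{u,f}_{1,i}$ the one-dimensional subfoliation $\cF^{u,f}_i$ is the fastest (strong) direction of $f\!\mid_{\cF^{u,f}_{1,i}}$, hence is absolutely continuous within that leaf by the Brin--Pesin theorem. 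One would like to combine these two ingredients to conclude that $\Leb$ disintegrates absolutely continuously along each $\cF^{u,f}_i$.

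The hard part is precisely this last combination, and I expect it to be the main obstacle. Absolute continuity of $\Leb$ along the full unstable foliation does not descend to absolute continuity along the intermediate \emph{weak} foliations $\cF^{u,f}_{1,i}$ for $i<k$: descending one step of the flag only controls the transverse \emph{strong} direction (via Brin--Pesin), and the weak directions remain uncontrolled, which is exactly the mechanism behind de la Llave's counterexamples to higher-dimensional rigidity \cite{L92}. Equivalently, ``all Ledrappier--Young transverse dimensions are full'' does not a priori imply ``the volume is a Gibbs expanding state for every one-dimensional subfoliation''. Making this implication work, necessarily using that the exponents of $f$ equal those of the \emph{linear} model, perhaps via a sharper analysis of the volume's conditionals along the flag, or via a Gogolev-type argument run at the level of the measures rather than of the already-known conjugacy, is where the real work lies. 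Once $\Leb$ is known to be a Gibbs expanding state for each $\cF^{u,f}_i$ and each $\cF^{s,f}_j$, the rest follows the proofs of Theorems~\ref{main.higherbase} and \ref{main.cneterfible} verbatim: Theorem~\ref{p.partialentropy} yields the $c$-invariance of $\Leb$; the skew-product analogue of Gogolev's lemma (Lemma~\ref{l.gogolevskew}) propagates uniform $C^{1+\vep}$-regularity of the projection $\pi$, hence of the center holonomy, up the stable and unstable flags; Journ\'e's regularity lemma assembles this into $C^{1+\vep}$-smoothness of the center foliation; and straightening the center holonomy as in Theorem~\ref{main.cneterfible}, together with the higher-dimensional Anosov rigidity result (Theorem~\ref{main.higherdimension}) applied to the quotient Anosov map on $\TT^d$, produces the $C^{1+\vep}$ conjugacy of $f$ with a true skew product over $L$.
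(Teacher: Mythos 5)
You should first be aware that the statement you were asked to prove is not a theorem of the paper: it appears verbatim in the final section (``Further remarks'') as an open \emph{conjecture}, proposed as a possible extension of Theorem~\ref{main.higherbase}. Your reduction strategy is exactly the one the authors have in mind: the structural facts you cite (Lemma~\ref{l.perturbationquotient} for the persistence of the one-dimensional sub-foliations $\cF^{u,f}_i$, $\cF^{s,f}_j$; the argument of Lemma~\ref{l.suinvariantandbase} giving $\pi_*\Leb=\Leb_{\TT^d}$; the Pesin formula forcing $\lambda^u_i(f,\Leb)=\lambda^u_i$ for every $i$) are all correct and are indeed in the paper. But the step you yourself single out as ``the hard part'' --- upgrading the equality of exponents $\lambda^u_i(f,\Leb)=\lambda^u_i$ to the equality of conditional entropies $h_{\Leb}(f,\cF^{u,f}_i)=\lambda^u_i$, i.e.\ to $\Leb$ being a Gibbs expanding state for each one-dimensional subfoliation (Theorem~\ref{l.criterionofgibbs}) --- is precisely the hypothesis that Theorem~\ref{main.higherbase} assumes and that the conjecture drops, and it is exactly the reason the authors leave the statement open. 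So the proposal is not a proof but a (correct) reduction to the genuinely missing ingredient.

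Two further cautions about the part you describe as ``genuinely free.'' The Ledrappier--Young entropy hierarchy \cite{LY1,LY2} is built along the \emph{strong} flag (fastest exponents first), not along the weak flag $\cF^{u,f}_{1,1}\subset\cdots\subset\cF^{u,f}_{1,k}$ that you telescope over; a relative Ruelle inequality for the weak-flag increments $h_{\Leb}(f,\cF^{u,f}_{1,i})-h_{\Leb}(f,\cF^{u,f}_{1,i-1})\leq\lambda^u_i(f,\Leb)$ is plausible but is neither in the paper nor a quotable standard fact, so even the claim that all flag-step relative entropies are maximal needs an argument. And Brin--Pesin absolute continuity of the strong one-dimensional direction inside a weak leaf controls holonomies within that leaf, not the disintegration of the ambient volume along $\cF^{u,f}_i$, which is what the Gibbs $e$-state property requires --- this is the same mechanism behind de la Llave's counterexamples that you correctly invoke. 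Note finally why the Anosov case (Theorem~\ref{main.higherdimension}) does not transfer: there one has an actual conjugacy $h$ with $L$ preserving each one-dimensional foliation (Gogolev's lemmas), so (B4)$\Rightarrow$(B5$'$) of Theorem~\ref{main.technique} converts exponent equality directly into leafwise smoothness; in the skew-product setting only the semiconjugacy $\pi$ is available, and the invariance principle Theorem~\ref{p.partialentropy} needs equality of conditional \emph{entropies}, not of exponents --- which is the gap your proposal leaves open.
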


\item{\bf Time one maps of hyperbolic flows.}

In general one cannot expect that the perturbation of the time one map of a hyperbolic flow can be embedded into another hyperbolic flow, even if the perturbation is volume preserving and it preserves the Lyapunov exponents. One can construct easily counterexamples even in dimension 3, for any hyperbolic flow, by increasing slightly the speed of the flow and mixing locally the stable and center directions, and respectively the unstable and center directions.

However it seems that some results could be obtained for geodesic flows on manifolds of constant negative curvature, assuming that the unstable (respectively the stable) exponents are all equal, and the dimension is larger than 2 (see for example \cite{Bu}, \cite{BX}).

\end{enumerate}


\begin{thebibliography}{10}
\bib{A}
D. V. Anosov.
\newblock Geodesic flows on closed Riemannian manifolds of negative curvature.
\newblock{\em Proc. Steklov Math. Inst.}, 90:1--235, 1967.

\bib{AS}
D. V. Anosov and Ya. G. Sinai.
\newblock Certain smooth ergodic systems.
\newblock {\em Russian Math. Surveys,} 22: 103--167, 1967.

\bibitem{ASV}
A. Avila, J. Santamaria and M. Viana.
\newblock Holonomy invariance: rough regularity and applications to Lyapunov exponents.
\newblock {\em Ast\'{e}risque} 358 (2013), 13--74.

\bibitem{AV}
A. Avila and M. Viana.
\newblock {\em Extremal Lyapunov exponents: an invariance principle and applications.}
\newblock {\em Invent. Math.}, 181(1):115--189, 2010.

\bibitem{AVW1}
A. Avila, M. Viana, and A. Wilkinson.
\newblock Absolute continuity, Lyapunov exponents and rigidity I: geodesic flows.
\newblock {\em J. Eur. Math. Soc. (JEMS)}, 17:1435--1462, 2015.

\bibitem{AVW2}
A. Avila, M. Viana, and A. Wilkinson.
\newblock Absolute continuity, Lyapunov exponents and rigidity II: compact center leaves.
\newblock In preparation.

\bibitem{Beyond}
C.~Bonatti, L.~J. D{\'{\i}}az, and M.~Viana.
\newblock {\em Dynamics beyond uniform hyperbolicity}, volume 102 of {\em Encyclopaedia of Mathematical Sciences}.
\newblock Springer-Verlag, 2005.

\bibitem{BGV}
C. Bonatti, X. G\'{o}mez-Mont, and M. Viana.
\newblock G\'en\'ericit\'e d'exposants de Lyapunov non-nuls pour des produits d\'eterministes de matrices.
\newblock {\em Ann. Inst. H. Poincar\'e Anal. Non Lin\'eaire,} 20: 579--624, 2003.

\bibitem{BBI09}
M.~Brin, D.~Burago, and S.~Ivanov.
\newblock Dynamical coherence of partially hyperbolic diffeomorphisms of the 3-torus.
\newblock {\em J. Mod. Dyn.}, 3:1--11, 2009.

\bibitem{BP74}
M.~Brin and Ya. Pesin.
\newblock Partially hyperbolic dynamical systems.
\newblock {\em Izv. Acad. Nauk. SSSR}, 1:177--212, 1974.

\bibitem{BV}
C.~Bonatti and M.~Viana.
\newblock SRB measures for partially hyperbolic systems whose central direction is mostly contracting.
\newblock {\em Israel J. of Math}. 115, 157--193, 2000.

\bibitem{Bu}
C. Butler
\newblock Rigidity of equality of Lyapunov exponents for geodesic flows.
\newblock {preprint arxiv.}

\bibitem{BK}
K. Burns, A. Katok,
\newblock Manifolds with non-positive curvature,
\newblock {\em Ergodic Theory and Dynamical Systems} 5 (1985) no 2, 307--317.

\bibitem{BX}
C. Butler and D. Xu
\newblock Uniformly quasiconformal partially hyperbolic systems.
\newblock {to appear in Ann. Sci. \'{E}cole Norm. Sup.}

\bibitem{CFL97}
X. Cabr\'e, E. Fontich and R. de la Llave.
\newblock The parameterization method for invariant manifolds. I. Manifolds associated to non-resonant subspaces.
\newblock {\em Indiana Univ. Math. J.} 52 (2003), no. 2, 283--328.

\bibitem{Cr}
C. B. Croke,
\newblock Rigidity for surfaces of nonpositive curvature
\newblock {\em Comment. Math. Helv.} 65 (1990), no. 1, 150--169.

\bibitem{DX}
D. Damjanovi\'{c} and D. Xu.
\newblock On conservative partially hyperbolic abelian actions with compact center foliation.
\newblock {\em www.arxiv.org}

\bibitem{Fe}
R. Feres,
\newblock Hyperbolic dynamical systems, invariant geometric structures, and rigidity.
\newblock {\em Mat. Res. Lett.} 1 (1994), no. 1, 11--26

\bibitem{FPS}
T. Fisher, R. Potrie and M. Sambarino,
\newblock Dynamical coherence of partially hyperbolic diffeomorphisms of tori isotopic to Anosov.
\newblock {\em Math. Z.} 278 (2014), issue 1-2, 149--168.

\bibitem{Fl}
L. Flaminio,
\newblock Local entropy rigidity for hyperbolic manifolds.
\newblock {\em Comm. Anal. Geom.} 3 (1995), no. 3â 555--596.

\bibitem{Fo}
P. Foulon,
\newblock Entropy rigidity of Anosov flows in dimension three
\newblock {\em Ergodic Theory Dynam. Systems} 21 (2001), no. 4, 1101--1112.

\bibitem{Fra70}
J.~Franks.
\newblock Anosov diffeomorphisms.
\newblock In {\em Global {A}nalysis ({P}roc. {S}ympos. {P}ure {M}ath., {V}ol. {XIV}, {B}erkeley, {C}alif., 1968)}, pages 61--93. Amer. Math. Soc., 1970.

\bibitem{G08}
A. Gogolev.
\newblock Smooth conjugacy of Anosov diffeomorphisms on higher dimensional tori.
\newblock {\em Journal of Modern Dynamics,} 2, no. 4, 645--700, 2008.

\bibitem{G}
A. Gogolev.
\newblock Bootstrap for local rigidity of Anosov automorphisms of the 3-torus.
\newblock {\em Communications in Mathematical Physics,} Vol. 352, Issue 2, 439--455, 2017

\bibitem{GG}
A. Gogolev and M. Guysinsky.
\newblock $C^1$-differentiable conjugacy of Anosov diffeomorphisms on three dimensional torus.
\newblock {\em DCDS-A}, 22 (2008), no. 1/2, 183--200.

\bibitem{GKS}
A. Gogolev, B. Kalinin and V. Sadovskaya.
\newblock Local rigidity for Anosov automorphisms, appendix by Rafael de la Llave.
\newblock {\em Mathematical Research Letters,} 18, no. 5, 843--858, 2011.

\bibitem{GKS2}
A. Gogolev, B. Kalinin and V. Sadovskaya.
\newblock Local rigidity for Anosov automorphisms, appendix by Rafael de la Llave.
\newblock {\em Preprint arXiv:1808.06249,} 2018.

\bibitem{GL}
C. Guillarmou and T. Lefeuvre,
\newblock The marked length spectrum of Anosov manifolds.
\newblock {\em preprint arxiv.}

\bibitem{Ham13}
A.~Hammerlindl.
\newblock Leaf conjugacies on the torus.
\newblock {\em Ergodic Theory Dynam. Systems}, 33:896--933, 2013.

\bibitem{HaP}
A.~Hammerlindl and R.~Potrie.
\newblock Pointwise partial hyperbolicity in three-dimensional nilmanifolds.
\newblock {\em J. Lond. Math. Soc.}, 89:853--875, 2014.

\bibitem{Has}
B. Hasselblatt
\newblock Problems in dynamical systems and related topics.
\newblock {\em Recent progress in Dynamics}, MSRI Publication, vol. 54, 2007.

\bibitem{HHUcoh}
F.~Rodriguez Hertz, M.~A.~Rodriguez Hertz, and R.~Ures.
\newblock A non-dynamically coherent example on $\mathbb{T}^3$.
\newblock {\em Annales Inst. Henri Poincar\'{e}-Analyse Non-Lin\'{e}aire}, 33, 1023--1032, 2016.

\bibitem{H}
K. Hiraide.
\newblock Expansive homeomorphisms with the pseudo-orbit tracing property of n-tori.
\newblock {\em J. Math. Soc. Japan} 41, 357--389, 1989.

\bibitem{HP}
M. Hirayama and Ya. Pesin.
\newblock Non-absolutely continuous foliations.
\newblock {\em Israel J. Math.}, 160:173--187, 2007.

\bibitem{HPS70}
M.~Hirsch, C.~Pugh, and M.~Shub.
\newblock Invariant manifolds.
\newblock {\em Bull. Amer. Math. Soc.}, 76:1015--1019, 1970.

\bibitem{HPS77}
M.~Hirsch, C.~Pugh, and M.~Shub.
\newblock {\em Invariant manifolds}, volume 583 of {\em Lect. Notes in Math.}
\newblock Springer Verlag, 1977.

\bibitem{HHW}
H. Hu, Y. Hua, W. Wu
\newblock{Unstable entropies and variational principle for partially hyperbolic diffeomorphisms}
\newblock {\em Advances in Mathematics}, vol. 321, 31--68, 2017.

\bibitem{HK}
S. Hurder and A. Katok.
\newblock Differentiability, rigidity, and Godbillon-Vey classes for Anosov flows
\newblock{\em Publ. Math. IHES,} vol. 72 (1990), 5--61.

\bibitem{J}
J.-L. Journ\'{e}.
\newblock A regularity lemma for functions of several variables.
\newblock {\em Rev. Mat. Iberoamericana,} no. 2, 187--193.

\bibitem{KS07}
B. Kalinin and V. Sadovskaya.
\newblock On classification of resonance-free Anosov $\ZZ^k$ actions.
\newblock {\em Michigan Mathematical Journal,} 55 (2007), no. 3, 651--670.

\bibitem{KS09}
B. Kalinin and V. Sadovskaya.
\newblock On Anosov diffeomorphisms with asymptotically conformal periodic data.
\newblock {\em Ergodic theory Dynam. Systems,} 29 (2009), 117--136.

\bibitem{KS10}
B. Kalinin and V. Sadovskaya.
\newblock Linear cocycles over hyperbolic systems and criteria of conformality.
\newblock {\em Journal of Modern Dynamics.} vol. 4 (2010), no. 3, 419--441.

\bibitem{Ka}
A. Katok.
\newblock Entropy and closed geodesics.
\newblock {\em Ergodic Theory Dynam. Systems,} 2 (1982), no. 2, 339--367.

\bibitem{Led}
F. Ledrappier.
\newblock Positivity of the exponent for stationary sequences of matrices.
\newblock {\em In Lyapunov exponents (Bremen, 1984), volume 1186 of Lect. Notes Math.}, pages 56--73. Springer-Verlag, 1986.

\bibitem{L}
F. Ledrappier,
\newblock Propri\'et\'es ergodiques des mesures de Sinai,
\newblock {\em Publ. Math.I .H.E.S.} 59 (1984), 163--188.

\bibitem{LS}
F. Ledrappier and J. M. Strelcyn.
\newblock A proof of the estimation from below in Pesin's entropy formula.
\newblock {\em Ergodic Theory Dynam. Systems}, 2: 203--219, 1982.

\bibitem{LY1}
F. Ledrappier and L.-S. Young.
\newblock The metric entropy of diffeomorphisms. I. Characterization of measures satisfying Pesin's entropy formula.
\newblock {\em Ann. of Math.}, 122:509--539, 1985

\bibitem{LY2}
F. Ledrappier and L.-S. Young.
\newblock The metric entropy of diffeomorphisms. II. Relations between entropy, exponents and dimension.
\newblock {\em Ann. of Math.}, 122:540--574, 1985.

\bibitem{Lew}
J. Lewowicz.
\newblock Expansive homeomorphisms of surfaces.
\newblock {\em Bol. Soc. Bras. Math.}, 20: 113--133, 1989.

\bibitem{L87}
R. de la Llave.
\newblock Invariants for smooth conjugacy of hyperbolic dynamical systems II.
\newblock {\em Commun. Math. Phys.} 109 (1987), 369--378.

\bibitem{L92}
R. de la Llave.
\newblock Smooth conjugacy and S-R-B measures for uniformly and non-uniformly hyperbolic systems.
\newblock {\em Commun. Math. Phys.} 150 (1992), 289--320.

\bibitem{L97}
R. de la Llave.
\newblock Analytic regularity of solutions of Livsic's cohomology equation and some applications to analytic conjugacy of hyperbolic dynamical systems.
\newblock {\em Ergodic Theory Dynam. Systems} 17 (1997), no. 3, 649--662.

\bibitem{L02}
R. de la Llave.
\newblock Rigidity of higher-dimensional conformal Anosov systems.
\newblock {\em Ergodic Theory Dynam. Systems} 22 (2002), no. 6, 1845--1870.

\bibitem{L04}
R. de la Llave.
\newblock Further rigidity properties of conformal Anosov systems.
\newblock {\em Ergodic Theory Dynam. Systems} 24 (2004), no. 5, 1425--1441.

\bibitem{LMM}
R. de la Llave, J. M. Marco and R. Moriy\'{o}n,
\newblock Canonical perturbation theory of Anosov systems and regularity results for the Livsic cohomology equation.
\newblock {\em Ann. of Math.} (2) 123 (1986), no. 3, 537--611.

\bibitem{LM}
R. de la Llave and R. Moriy\'{o}n,
\newblock Invariants for smooth conjugacy of hyperbolic dynamical systems IV.
\newblock {\em Commun. Math. Phys.} 116 (1988), 185--192.

\bibitem{MM1}
J.M. Marco and R. Moriy\'{o}n,
\newblock Invariants for smooth conjugacy of hyperbolic dynamical systems I.
\newblock {\em Commun. Math. Phys.} 109 (1987), 681--689.

\bibitem{MM2}
J.M. Marco and R. Moriy\'{o}n,
\newblock Invariants for smooth conjugacy of hyperbolic dynamical systems III.
\newblock {\em Commun. Math. Phys.} 112 (1987), 317--333.

\bibitem{MT18}
F. Micena and A. Tahzibi,
\newblock A note on rigidity of Anosov diffeomorphisms on the three torus.
\newblock {\em www.arxiv.org.}

\bibitem{M}
J. Milnor,
\newblock Fubini foiled: Katok's paradoxical example in measure theory.
\newblock {\em Math. Intelligencer}, 19: 30--32, 1997.

\bibitem{N70}
S. E. Newhouse,
\newblock On codimension one Anosov diffeomorphisms.
\newblock {\em Amer. J. Math.} 92 (1970), 761--770.

\bibitem{Ot}
J-P. Otal,
\newblock Le spectre marqu\'e des longueurs des surfaces \`a courbure n\'egative.
\newblock {\em Ann. of Math.} (2) 131, no. 1, 151--162, 1990

\bibitem{P}
Ya. B. Pesin.
\newblock Families of invariant manifolds corresponding to non-zero characteristic exponents.
\newblock {\em Math. USSR. Izv.}, 10:1261--1302, 1976.

\bibitem{PS82}
Ya. Pesin, Ya. Sinai,
\newblock Gibbs measures for partially hyperbolic attractors,
\newblock {\em Ergodic Theory Dynam. Systems} 2, 417--438, 1982.

\bibitem{Pol18}
M. Poletti.
\newblock Geometric growth for Anosov maps on the 3 torus,
\newblock {\em www.arxiv.org}, to appear in {Bull. Brazilian Math. Soc.}

\bibitem{Pot15}
R.~Potrie.
\newblock Partial hyperbolicity and foliations in {$\TT^3$}.
\newblock {\em J. Mod. Dyn.}, 9:81--121, 2015.

\bibitem{PSh72}
C.~Pugh and M.~Shub.
\newblock Ergodicity of Anosov actions.
\newblock {\em Invent. Math.}, 15:1--23, 1972.

\bibitem{PSh89}
C.~Pugh and M.~Shub.
\newblock Ergodic attractors.
\newblock {\em Trans. Amer. Math. Soc.}, 312:1--54, 1989.

\bibitem{PSW}
C.~Pugh, M.~Shub and A.~Wilkinson.
\newblock Holder foliations.
\newblock {\em Duke Math. J.,} 86 (3): 517--546, 1997.

\bib{Rok49}
V. A. Rokhlin,
\newblock On the fundamental ideas of measure theory,
\newblock {\em Amer. Math. Soc. Transl.} 10 (1952) 1--52; Translation from Mat. Sbornik 25 (1949) 107--150.

\bibitem{RW}
D. Ruelle and A. Wilkinson.
\newblock Absolutely singular dynamical foliations.
\newblock {\em Comm. Math. Phys.,} 219: 481--487, 2001.

\bibitem{Sa}
R. Saghin
\newblock Volume growth and entropy for $C^1$ partially hyperbolic diffeomorphisms
\newblock{\em Discr. Cont. Dyn. Syst. Series A}, vol. 34, no. 9, 3789--3801, 2014.

\bibitem{SX09}
R.~Saghin and Z.~Xia.
\newblock Geometric expansion, Lyapunov exponents and foliations.
\newblock {\em Ann. Inst. H. Poincar\'e Anal. Non Lin\'eaire}, 26:689--704, 2009.

\bibitem{SW}
M. Shub and A. Wilkinson.
\newblock Pathological foliations and removable zero exponents.
\newblock {\em Invent. Math.}, 139:495--508, 2000.

\bibitem{Sp}
R. Spatzier
\newblock An invitation to rigidity theory.
\newblock {\em Modern dynamical systems and applications,} 211--231, Cambridge Univ. Press, Cambridge, 2004

\bibitem{Ure12}
R.~Ures.
\newblock Intrinsic ergodicity of partially hyperbolic diffeomorphisms with a hyperbolic linear part.
\newblock {\em Proc. Amer. Math. Soc.}, 140:1973--1985, 2012.

\bibitem{TY}
A. Tahzibi and J. Yang.
\newblock Invariance principle and rigidity of high entropy measures.
\newblock {\em accepted by Trans. Amer. Math. Soc.}

\bibitem{Almost}
M. Viana.
\newblock Almost all cocycles over any hyperbolic system have nonvanishing Lyapunov exponents.
\newblock {\em Ann. of Math.}, 167: 643--680, 2008.

\bibitem{VY1}
M. Viana and J. Yang.
\newblock Physical measures and absolute continuity for one-dimensional center direction,
\newblock {\em Annales Inst. Henri Poincar\'e - Analyse Non-Lin\'{e}aire}, 30, 845--877, 2013.

\bibitem{VY2}
M.~Viana and J.~Yang
\newblock Measure-theoretical properties of center foliations.
\newblock {Modern Theory of Dynamical Systems: A Tribute to Dmitry Victorovich Anosov, Contemporary Mathematics} Volume 692,  2017.

\bibitem{VY3}
M. Viana and J. Yang.
\newblock Continuity of Lyapunov exponents in the $C^0$ topology.
\newblock {\em www.arxiv.org, to appear in Israel J. of Math.}

\bibitem{YY}
F. Yang and J. Yang.
\newblock Decay of correlations for maximal measure of maps derived from Anosov.
\newblock {\em www.arxiv.org}

\bibitem{Yang16}
J.~Yang.
\newblock Entropy along expanding foliations.
\newblock {\em www.arxiv.org}.

\end{thebibliography}
\end{document}